\newcommand{\bbN}{\mathbb{N}}
\newcommand{\bbR}{\mathbb{R}} 
\newcommand{\bbQ}{\mathbb{Q}}
\newcommand{\EXP}[1]{\mathsf{E}\left[#1\right]}
\newcommand{\PROB}[1]{\mathsf{P}\left[#1\right]}
\newcommand{\hd}{H\" older }
\newcommand{\D}{\mathrm{d}}
\newcommand{\Hd}{H\"{o}lder}
\newcommand{\mcK}{\mathcal{K}}
\numberwithin{equation}{section}
\theoremstyle{plain}
\theoremstyle{definition}
\newtheorem{definition}{Definition}[section]
\newtheorem{theorem}[definition]{Theorem}
\newtheorem{proposition}[definition]{Proposition}
\newtheorem{corollary}[definition]{Corollary}
\newtheorem{lemma}[definition]{Lemma}
\newtheorem{remark}[definition]{Remark}
\newtheorem{assumption}[definition]{Hypothesis}
\title{LDP0322}
\author{Ryoji Takano}
\date{March 2022}
\title{Large deviation principle for stochastic differential equations driven by stochastic integrals}
\author{Ryoji Takano \thanks{
Graduate School of Engineering Science, Osaka University, 
1-3, Machikaneyama, Toyonaka, Osaka, 560-8531, Japan. 
Email: {\tt rtakano@sigmath.es.osaka-u.ac.jp}
}}
\date{}
\begin{document}
\maketitle

\begin{abstract}
In this paper, we prove the pathwise large deviation principle (LDP) for stochastic differential equations driven by stochastic integrals in one dimension. 
The result can be proved with a minimal use of rough path theory, and this implies the pathwise LDP for many class of rough volatility models, and it characterizes the asymptotic behavior of implied volatility.
First, we introduce a new concept called $\alpha$-Uniformly Exponentially Tightness, and prove the pathwise LDP for stochastic integrals on \Hd \ spaces. 
Second, we apply this type of LDP to deduce the pathwise LDP for stochastic differential equations driven by stochastic integrals in one dimension.
Finally, we derive the asymptotic behavior of implied volatility as an application of main results.
\end{abstract}

\section{Introduction}
A rough volatility model is a stochastic volatility model for an asset price process in which the H\"{o}lder regularity of volatility processes is less than half. 
In recent years, such a model has attracted attention, because as shown by \cite{F21}, rough volatility models are the only class of continuous price models that are consistent to a power law of implied volatility term structure typically observed in equity option markets. 
Proving a large deviation principle (LDP) is one way to derive the power law under rough volatility models as done by many authors using various methods \cite{FoZh,BaFrGuHoSt, BaFrGaMaSt,  FrGaPi1, FrGaPi2,Gu, GuViZh1,GuViZh2, JaPaSt, GeJaPaStWa, JaPa, Gu22,FuTa}.
An introduction to LDP and some of its applications to finance and insurance problems are discussed in \cite{Pham, FGGJT}.
One precise approximation formula for implied volatility is the BBF formula~\cite{BBF, ABBF}, which follows from short-time LDP under local volatility models. 
On the other hand, the SABR formula, which is of daily use in financial practice, is also proved for a valid approximation under the SABR model by means of LDP~\cite{Osa}.
From these relations between LDP and precise approximation under classical (non-rough) volatility models, we expect LDP for rough volatility models to provide in particular a useful implied volatility approximation formula for financial practice such as model calibration.

For the proof for pathwise LDP of standard stochastic differential equations (SDEs), an elegant method using rough path theory was proposed \cite{FV,FV10}.
The continuity of the solution map on rough path spaces is key to derive the pathwise LDP for such SDEs.
However, when considering rough volatility models, the usual rough path theory does not work because the regularity of the volatility process is lower than that of asset prices, and so stochastic integrands are not controlled by the stochastic integrators in the sense of  \cite{Gubinelli}. 
Nevertheless, methods which are analogue of rough path theory have been proposed to prove the pathwise LDP for rough volatility model, one uses the theory of regularity structure \cite{BaFrGaMaSt}, and another uses a variant of rough path theory \cite{FuTa}. 
In \cite{JaPaSt, BaFrGaMaSt}, the following It\^{o} SDE is discussed (here $Y$ represents the dynamics of the logarithm of a stock price process):
\begin{align*}
     \D Y_t =  f(\hat{X}_t,t) \D X_t -\frac{1}{2} f^2(\hat{X}_t,t) \D t,
\end{align*}
where $X$ is a Brownian motion, $\hat{X}$ is the Riemann-Liouville type fractional Brownian motion with Hurst index $H \in (0,1/2)$, and $f$ is a smooth function. 
This SDE is called rough Bergomi model \cite{BaFrGa}. In \cite{BaFrGaMaSt}, the authors proved the short-time LDP for rough Bergomi models and by using the continuity of Hairer's reconstruction map. 
The point is that its proof comes down to the small-noise LDP for ``models" which construct the solution of the rough Bergomi model. 
On the other hand, this result was extended to situations where rough volatility models have local volatility in \cite{FuTa};
\begin{align}\label{model}
    \D Y =  \sigma(Y) f(\hat{X},t) \D X -\frac{1}{2} \sigma^2(Y) f^2(\hat{X},t) \D t,
\end{align}
where $X$ is a Brownian motion, $\hat{X} := \int_0^{\cdot} \kappa (\cdot-s)\D W_s$ (where $\kappa$ is a deterministic singular kernel and $W$ is a Brownian motion), $\sigma $ and  $f$ are smooth functions respectively. 
If $\sigma =1$ and $\kappa = \kappa_H$ ($\kappa_H$ is the Riemann-Liouville kernel, see (\ref{RLk})), \eqref{main} is the  rough Bergomi model. 
In \cite{FuTa}, partial rough path spaces lacking the iterated integral of $\hat{X}$ were considered, and a partial rough path integration map  was constructed. 
By using the continuity property of this integration map, the small-noise and short-time LDP for (\ref{model}) were proved based on the pathwise LDP for the canonical noises constructed by $(X,\hat{X})$ on partial rough path spaces. 
Compared with \cite{BaFrGaMaSt}, the framework of \cite{FuTa} is more elementary and one can prove that not only the LDP for rough Bergomi model but also that for many rough volatility models, see the lists of Introduction in \cite{FuTa}. 
However, the continuity property of the integration map in \cite{FuTa} relies on the smoothness of the coefficient $f$, because the higher order Taylor expansion of $f$ is needed to cover the low regularity of $\hat{X}$. 
For these reason, although the previous work \cite{FuTa} is widely applicable, it goes beyond the framework of it when $f$ is not smooth. 
For example generalized rough volatility models discussed in \cite{HoJaMu} or when $f(y,t) := \sqrt{y}$ \cite{FoZh}.

Inspired by above previous research,  we will discuss the following SDE in one dimension in this paper:
\begin{align}\label{main}
 \D Y^\epsilon = \sigma(Y^\epsilon) A^\epsilon_t \D X^\epsilon - \frac{1}{2} \sigma^2(Y^\epsilon) (A^\epsilon)^2_t \D t
\end{align}
Here  $X^\epsilon := \epsilon^{1/2} X$,   $X$ is a one dimensional Brownian motion,  $A^\epsilon$ is an adapted continuous process and $\sigma$ is a smooth function.
If $A^\epsilon=f(\hat{X}^\epsilon,\cdot)$, \eqref{main} coincides with \eqref{model}.
In this paper, we will discuss the pathwise LDP for  \eqref{main}.

Now we consider how to prove the pathwise LDP for (\ref{main}). 
Let $A\cdot X$ be the It\^{o} stochastic integral for $A$ with respect to $X$ and $\Lambda(t) := t$.
Let also $ Z^\epsilon: = (A^\epsilon\cdot X^\epsilon, (A^\epsilon)^2\cdot \Lambda)$ and we will regard $Z^\epsilon$ as the driver for \eqref{main}.
Then we define the Young pairing (see Section 9.4 in \cite{FV10}) $\mathbb{Z}^\epsilon$ for $Z^\epsilon$ and we regard $\mathbb{Z}^\epsilon$ as the canonical lift for rough path spaces. 
Since $X^\epsilon$ are one dimensional paths, the mapping $Z^\epsilon \mapsto \mathbb{Z}^\epsilon$ is continuous. 
Combining to the usual rough path theory, we finally can construct the solution $Y^\epsilon$ of \eqref{main} from $Z^\epsilon$:
\[
  \begin{diagram}
   \node{G \Omega_{\alpha\text{-Hld}} }     \node{ \mathbb{Z}^\epsilon} \arrow{e,t}{\text{sol. map} } \node{ \mathbb{Y}^\epsilon } \arrow{s,r}{\text{projection}} \node{\quad } \\
   \node{C^{\alpha \text{-Hld}} }  \node{Z^\epsilon}  \arrow{n,l}{\text{Young pair}} \node{Y^\epsilon} \node{\quad}
  \end{diagram}
\]
Here for $\alpha \in (1/3,1/2]$, $C^{\alpha \text{-Hld}}$ is H\"{o}lder spaces, and $G \Omega_{\alpha\text{-Hld}}$ is rough path spaces, and ``sol. map" in the above diagram means the solution map in the sense of rough differential equations.
Therefore, the pathwise LDP for \eqref{main} can be proved from the small noise LDP for $\{Z^\epsilon\}_{\epsilon>0}$ on H\"{o}lder spaces.
This idea enables us to avoid adherence to use the smoothness of coefficient $f$ which is the essential condition to cover the low regularity of $\hat{X}$ in \cite{BaFrGaMaSt, FuTa}. 
We also note that our approach does not use a variant of rough path theory or regularity structure theory, which means we are able to obtain simpler proof. 
Although the small noise LDP for stochastic integrals with respect to the uniform topology was proved in \cite{Ga08}, this results cannot be applied for our methods, because our idea requires the small noise LDP for stochastic integrals with respect to ``\Hd \ topology''.

Our method also allows for a unified treatment of pathwise LDP for rough volatility models, compared with \cite{BaFrGaMaSt, FuTa}. 
For example, the pathwise LDP for rough volatility models were discussed under the different assumptions which are not mutually inclusive \cite{FoZh,HoJaMu,GuJaRoSh,FuTa,BaFrGaMaSt}, but these results indeed are included in our setting. 
To the best of the author's knowledge, no such pathwise LDP for these models is known in the literature.

In the perspective of applications for mathematical finance, it is important to derive the asymptotic formula of the implied volatility because of the pricing of put/call options. 
Moreover, the formula is applicable to check whether models are consistent to the power law of implied volatility or not.
For example, generalized rough volatility models discussed in \cite{HoJaMu} are widely applicable, in the sense that the authors of \cite{HoJaMu} provide us how to make a
 numerical approximation of such models.
 Although one reason for using and studying such models is that it is expected to be consistent with the power laws of the implied volatility observed in the market, there is no justifications of this expectation in the literature.
As an application of our analysis, we will prove the short-time LDP of them (actually one can treat more general models) and derive an asymptotic formula of the implied volatility which tells us the models are consistent to the power law of the implied volatility (Corollary \ref{IVS}). 
This formula is described as a generalization of Forde and Zhang's work \cite{FoZh}.
 
In section 2, we first discuss the pathwise LDP for stochastic integrals on \Hd \ spaces, this is Theorem \ref{main1}. 
We next discuss the pathwise LDP for (\ref{main}) (actually we will discuss the Stratonovich SDEs \eqref{SDEs} corresponding to \eqref{main}), see Theorem \ref{main4}.
In section 3, we will first show how to apply main results to the pathwise LDP for rough volatility models, see Theorem \ref{main5} for small-noise LDP and see Theorem  \ref{shorttime} for short-time LDP. 
Then we will derive an asymptotic formula for implied volatility, see Corollary \ref{IVS}.
In section 4, we will prove the main theorem in order.
  
\section{Main results}

 \subsection{Large deviation principle for stochastic integrals}
 We first review the LDP for stochastic integrals with respect to the uniform topology discussed in \cite{Ga08}.
 In \cite{Ga08}, the index set of a sequence of stochastic processes is the natural number set $n \in\mathbb{N}$, but regarding as $n= \epsilon^{-1}$, we consider the family of stochastic processes  $\{X^{\epsilon}\}_{\epsilon>0}$ which means that the index set of it is $(0,1]$.
 We say that a real function $x: [0,\infty) \to \bbR$ is cadlag if $x$ is continuous on the right and has limits on the left.
 Throughout this paper, we fix a filtered probability space satisfying the usual conditions $(\Omega, \mathcal{F}, (\mathcal{F}_t)_{t \ge 0}, \mathsf{P})$ .

 \begin{definition}[Definition 1.1 \cite{Ga08}]\label{UUET}
 Let $\{ X^{\epsilon}\}_{\epsilon > 0} $ be a family of real valued cadlag semi-martingales. We say that the family $\{ X^{\epsilon} \}_{\epsilon > 0}$ is uniformly exponentially tight with speed $\epsilon^{-1}$ if for every $t > 0 $ and every $M > 0$, there is $K_{M,t} > 0$ such that
 \begin{align}\label{UET}
     \limsup_{\epsilon \searrow 0 } \epsilon \log \sup_{U \in \mathcal{S}} \PROB{\sup_{s \le t } | (U_{-} \cdot X^{\epsilon})_s | \ge K_{M,t} }   \le -M,
 \end{align}
 where $\mathcal{S}$ be the set of all simple adapted processes $U$ with $\sup_{t \ge0} |U_t| \le1$ and $(U_{-})_t := \lim_{s \to t-} U_s$. 
 In this paper, denote $U \cdot X$ by the stochastic integral for $U$ with respect to a semi-martingale $X$ in It\^{o} sense:
 \begin{align*}
     (U\cdot X)_t: = \int_0^t U_r \D X_r .
 \end{align*}
 \end{definition}

\begin{definition}[Section 1.2 \cite{DeZe}]
 Let $(E,\mathcal{B}(E))$ be a metric space with a Borel $\sigma$-algebra $\mathcal{B}(E)$. 
\begin{enumerate}
\item[$(i)$] We say that a function $I:E \rightarrow [0,\infty]$ be a good rate function if, for all $\lambda \in [0,\infty)$, the set 
\[
\{ x \in E : I(x) \le \lambda \},
\] 
is compact on $E$.
\item[$(ii)$] We say that the family of measures $\{\mu_{\epsilon} \}_{\epsilon > 0}$ on $E$ satisfies the LDP with speed $\epsilon^{-1}$ with good rate function $I$ if, for all $\Gamma \in \mathcal{B}(E)$, 
\[
- \inf_{x \in \Gamma^{o} } I(x) 
\le \liminf_{\epsilon \searrow 0} \epsilon \log \mu_{\epsilon} (\Gamma) 
\le \limsup_{\epsilon \searrow 0} \epsilon \log \mu_{\epsilon} (\Gamma)
\le - \inf_{x \in \overline{\Gamma} } I(x),
\] 
where $\overline{\Gamma}$ is the closure of $\Gamma$, and  $\Gamma^o$ is the interior of $\Gamma$.
\end{enumerate}
\end{definition}

In this paper, `` with speed $\epsilon^{-1}$'' is omitted. Denote by  $ D([0,\infty),\mathbb{R})$ the space of all cadlag functions and denote by $d_D$  the Skorohod topology (see Chapter 3 in \cite{Bill}). 

 \begin{lemma}[Theorem 1.2 \cite{Ga08}]\label{Ga}
Let $\{X^{\epsilon} \}_{\epsilon > 0}$  be a uniformly exponentially tight family of cadlag adapted semi-martingales on $\bbR$ and $\{ A^{\epsilon} \}_{\epsilon > 0}$ be a family of real valued cadlag adapted processes.
Assume that $ \{ (A^{\epsilon},X^{\epsilon}) \}_{\epsilon > 0}$ satisfies the LDP on $(D([0,\infty),\mathbb{R}) ,d_D) \times (D([0,\infty),\mathbb{R}) ,d_D)$  with good rate function $I^{\#}$. 
Then the family of triples $\{ (A^{\epsilon},X^{\epsilon},A^{\epsilon}\cdot X^{\epsilon}) \}_{\epsilon > 0}$ satisfies the LDP on $(D([0,\infty),\mathbb{R}) ,d_D) \times (D([0,\infty),\mathbb{R}) ,d_D) \times (D([0,\infty),\mathbb{R}) ,d_D) $ with good rate function
\[
I(a,x,z) =
\begin{cases}
    I^{\#}(a,x),  & z = a \cdot x, \ x \in \textbf{BV},\\
    +\infty, & \text{otherwise},
\end{cases}
\]
where $\textbf{BV}$ is the set of bounded variation and $a\cdot x$ means the Riemann-Stieltjes integral for $a$ with respect to $x$.
 \end{lemma}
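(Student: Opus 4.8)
\emph{Strategy.} The plan is to run the usual large--deviations machinery for stochastic integrals: first upgrade the LDP for $\{(A^\epsilon,X^\epsilon)\}$ and the uniform exponential tightness of $\{X^\epsilon\}$ into exponential tightness of the triples $\{(A^\epsilon,X^\epsilon,A^\epsilon\cdot X^\epsilon)\}$; then approximate the It\^{o} integral by Riemann-type sums $A^{\epsilon,\delta}\cdot X^\epsilon$ that depend continuously (after mollifying at the mesh points) on the pair $(A^\epsilon,X^\epsilon)$; transport the LDP through these continuous maps by the contraction principle; show that the Riemann sums are exponentially good approximations of $A^\epsilon\cdot X^\epsilon$, uniformly in $\epsilon$, as $\delta\to 0$; and finally conclude by the extended contraction principle, identifying the limiting rate function with the one in the statement.

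\emph{Step 1 (exponential tightness of the triple).} Since $D([0,\infty),\bbR)$ is Polish and $I^{\#}$ is good, $\{(A^\epsilon,X^\epsilon)\}$ is exponentially tight, and hence so is each of its marginals; it remains to control the third coordinate. Fix $t>0$ and $M>0$. By exponential tightness of $\{A^\epsilon\}$ there is $R$ with $\limsup_\epsilon\epsilon\log\PROB{\sup_{s\le t}|A^\epsilon_s|>R}\le -M$, and on the complementary event $A^\epsilon\cdot X^\epsilon=R\big((R^{-1}A^\epsilon)\cdot X^\epsilon\big)$ up to the first exit time. A monotone class argument upgrades \eqref{UET} from simple integrands to all adapted cadlag integrands bounded by $1$, so applying it to $R^{-1}A^\epsilon$ gives $\limsup_\epsilon\epsilon\log\PROB{\sup_{s\le t}|(A^\epsilon\cdot X^\epsilon)_s|>K}\le -M$ for a suitable $K=K(M,t,R)$. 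A companion estimate for the Skorohod modulus of continuity of $A^\epsilon\cdot X^\epsilon$, obtained by the same truncation together with \eqref{UET} applied to the restricted integrands $A^\epsilon\mathbf{1}_{(s',s]}$, then yields exponential tightness of $\{(A^\epsilon,X^\epsilon,A^\epsilon\cdot X^\epsilon)\}$.

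\emph{Steps 2--3 (discretization, contraction, exponential approximation).} For $\delta>0$ let $A^{\epsilon,\delta}_r:=A^\epsilon_{\delta\lfloor r/\delta\rfloor}$, a simple adapted process, so that $A^{\epsilon,\delta}\cdot X^\epsilon=\sum_{k\ge0}A^\epsilon_{k\delta}\big(X^\epsilon_{(k+1)\delta\wedge\cdot}-X^\epsilon_{k\delta\wedge\cdot}\big)$ is (up to a mollification of the point evaluations, which does not affect any estimate) a continuous functional of $(A^\epsilon,X^\epsilon)$. The contraction principle then gives the LDP for $\{(A^\epsilon,X^\epsilon,A^{\epsilon,\delta}\cdot X^\epsilon)\}$ with rate function $I^{\#}(a,x)$ on $\{z=a\cdot_\delta x\}$ and $+\infty$ elsewhere, where $a\cdot_\delta x$ denotes the corresponding Riemann sum. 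One then verifies
\[
\lim_{\delta\to0}\ \limsup_{\epsilon\to0}\ \epsilon\log\PROB{\ \sup_{s\le t}\big|(A^\epsilon\cdot X^\epsilon)_s-(A^{\epsilon,\delta}\cdot X^\epsilon)_s\big|>\eta\ }=-\infty\qquad(t,\eta>0):
\]
on the event where $\sup_{s\le t}|A^\epsilon_s|\le R$ and the $\delta$-oscillation of $A^\epsilon$ on $[0,t]$ is $\le\eta'$ --- whose complement is exponentially negligible, by exponential tightness of $\{A^\epsilon\}$ together with a Skorohod modulus bound --- the integrand $A^\epsilon-A^{\epsilon,\delta}$ is bounded by $\eta'$, and a last application of \eqref{UET} controls its integral against $X^\epsilon$.

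\emph{Step 4 (extended contraction and identification of the rate function).} Combining Steps~1--3, the extended contraction principle (see Theorem~4.2.23 in \cite{DeZe}) gives the LDP for $\{(A^\epsilon,X^\epsilon,A^\epsilon\cdot X^\epsilon)\}$ with rate function
\[
I(a,x,z)=\sup_{\eta>0}\ \liminf_{\delta\to0}\ \inf\big\{\,I^{\#}(a',x')\ :\ \|(a',x',a'\cdot_\delta x')-(a,x,z)\|<\eta\,\big\}.
\]
The remaining point --- and, I expect, the main obstacle --- is to show that this equals $I^{\#}(a,x)$ when $x\in\textbf{BV}$ and $z=a\cdot x$ (Riemann--Stieltjes), and equals $+\infty$ otherwise. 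For the inequality ``$\le$'' one tests with the constant sequence $(a',x')=(a,x)$: because $x\in\textbf{BV}$ and $a$ is regulated (being cadlag) and, after a mesh adjustment, shares no discontinuity with $x$, the Riemann sums $a\cdot_\delta x$ converge to the Riemann--Stieltjes integral $a\cdot x=z$, so $I(a,x,z)\le I^{\#}(a,x)$. For the reverse inequality and the value $+\infty$ off the good set, one uses that $\{I^{\#}\le\lambda\}$ is compact: any sequence $(a_\delta,x_\delta)$ with $\sup_\delta I^{\#}(a_\delta,x_\delta)<\infty$ is relatively compact, its limit is necessarily $(a,x)$ with $I^{\#}(a,x)\le\liminf_\delta I^{\#}(a_\delta,x_\delta)$, and a Helly-type stability of Riemann--Stieltjes integration shows that $a_\delta\cdot_\delta x_\delta$ can only converge to $a\cdot x$ with $x\in\textbf{BV}$; here one also needs the fact --- itself a consequence of \eqref{UET} --- that the effective domain of the $X$-marginal of $I^{\#}$ is contained in $\textbf{BV}$. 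Putting the two inequalities together gives the stated rate function.
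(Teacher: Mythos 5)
The paper does not prove Lemma \ref{Ga} at all: it is imported verbatim as Theorem~1.2 of \cite{Ga08}, and the only internal analogue of its proof is the paper's adaptation of Garcia's method for Theorem \ref{main1} (the stopping-time map $G_\delta$ of Definition \ref{defG}, almost compactness in Lemma \ref{G}, and the exponentially good approximation in Lemma \ref{ega}). Your overall architecture (discretize, contraction, exponentially good approximation, extended contraction, identify the rate function) is the same as Garcia's, but your implementation has a genuine gap at the decisive step: you discretize on the deterministic grid $A^{\epsilon,\delta}_r=A^\epsilon_{\delta\lfloor r/\delta\rfloor}$, and for cadlag integrands this does not give exponentially good approximations under the stated hypotheses. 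Exponential tightness in $(D,d_D)$ controls only the Skorohod modulus $w'$, not the uniform modulus, so the event ``$\delta$-oscillation of $A^\epsilon$ on $[0,t]$ is $\le\eta'$'' is in general not exponentially negligible — already a deterministic $A^\epsilon$ with a single unit jump violates it with probability one for every $\delta$ — and on its complement the error integrand $A^\epsilon-A^{\epsilon,\delta}$ is of the order of the jump size, so \eqref{UET} only returns the fixed constant $K_M$ rather than a bound tending to $0$ with $\delta$. This is precisely why Garcia partitions time with the stopping times $\tau^\delta_k$ at which $A^\epsilon$ has moved by $\delta$ (the error is then $\le\delta$ pathwise, jumps or not); the price is that the resulting map $(a,x)\mapsto G_\delta(a,x)$ is not continuous on Skorohod space, so the plain contraction principle of your Step~2 is unavailable and must be replaced by the ``almost compact'' machinery (Definition \ref{almost compact} and Theorem~7.1 of \cite{Ga08}). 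Independently of this, point evaluations $a\mapsto a_{k\delta}$ and increments $x\mapsto x_{(k+1)\delta}-x_{k\delta}$ are not $d_D$-continuous, and your parenthetical ``mollification of the point evaluations, which does not affect any estimate'' would have to mollify the $x$-evaluations as well, which changes the integral and does require justification.

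Your Step~4 also leans on two facts that are asserted rather than proved and are themselves substantial lemmas in \cite{Ga08}: (i) that \eqref{UET} forces the $x$-marginal of the effective domain of $I^{\#}$ into $\textbf{BV}$ — indeed one needs the stronger statement that level sets of $I^{\#}$ carry uniformly bounded variation, which is what makes your ``Helly-type'' stability usable; and (ii) the stability of Riemann--Stieltjes integrals along $d_D$-convergent sequences, which fails without such uniform variation bounds. So while the skeleton of your argument matches the source, Steps~2--3 as written break down for integrands with jumps, and repairing them leads you back to Garcia's stopping-time partitions and his extension of the contraction principle — i.e., essentially to the proof the paper reproduces in the H\"older setting for Theorem \ref{main1}.
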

 
We wii improve on this result in terms of \Hd\ topology.
For $\alpha \in (0,1]$, denote by $C^{\alpha \text{-Hld}}([0,1],\bbR)$  the \hd space with the \hd norm
 \[
\|x\|_{\alpha\text{-Hld} } := |x_0| +  \sup_{0\le s<t \le 1} \frac{|x_t-x_s|}{|t-s|^{\alpha}},
\]
and let 
\begin{align*}
    C_{0}^{\alpha \text{-Hld}}([0,1],\bbR)
    := \{ x \in C^{\alpha\text{-Hld}} ([0,1],\bbR) : \lim_{\delta\searrow0}w_{\alpha}(\delta,x) =0 \},
\end{align*}
where 
\begin{align*}
    w_{\alpha}(\delta,x) := \sup_{|t-s|\le \delta} \frac{|x_t-x_s|}{|t-s|^{\alpha}}.
\end{align*}
Note that $C_{0}^{\alpha\text{-Hld}}([0,1],\bbR)$ is a separable Banach space, see \cite{Ha}.
We next introduce a concept of $\alpha$-Uniformly Exponentially Tight.
 
 \begin{definition}\label{uet2}
We fix $\alpha \in(0,1]$. Let $\{ X^{\epsilon} \}_{\epsilon > 0}$ be a family of real valued continuous semi-martingales on $[0,1]$. We say that the family $\{ X^{\epsilon} \}_{\epsilon > 0}$ is \emph{ $\alpha$-Uniformly Exponentially Tight} if, for all $M>0$, there exists $K_M >0$ such that
\begin{equation}\label{1.1}
    \limsup_{\epsilon\searrow 0} \epsilon \log \sup_{U\in \mathcal{B} ([0,1],\mathbb{R})}  
    \PROB{ \| U\cdot X^{\epsilon} \|_{\alpha \text{-Hld}} \ge K_M } \le - M,
\end{equation}
where $\mathcal{B}([0,1],\mathbb{R} )$ is the set of all adapted, left continuous with right limits  processes $U$ on $[0,1]$ such that $\sup_{t\in[0,1]} |U_t| \le 1$.
\end{definition}

\begin{remark}
$\alpha$-Uniformly Exponentially Tight is stronger than uniformly exponentially tight in the following sense. Assume that $\{ X^{\epsilon} \}_{\epsilon > 0}$ is $\alpha$-Uniformly Exponentially Tight.
Note that for all $U \in \mathcal{S}$, $U_{-} \in \mathcal{B}([0,1],\bbR)$. For all $M>0$, take $K_M>0$ such that (\ref{1.1}) holds. Then we have that for all $t \in (0,1)$,
\begin{align*}
     &\limsup_{\epsilon \searrow 0 } \epsilon \log \sup_{U \in \mathcal{S}} \PROB{  \sup_{s \le t } |(U_{-} \cdot X^{\epsilon} )_s | \ge K_{M}} \\
     & \le \limsup_{\epsilon\searrow 0} \epsilon \log \sup_{\tilde{U} \in \mathcal{B} ([0,1],\mathbb{R})}  
     \PROB{ \| \tilde{U} \cdot X^{\epsilon} \|_{\alpha \text{-Hld}} \ge K_M}\\
     &\le - M.
\end{align*}
Hence we conclude that $\{ X^{\epsilon} \}_{\epsilon > 0}$ satisfies (\ref{UET}) when $t \in (0,1)$.
\end{remark}

Let $C([0,1],\bbR)$ be the set of all real valued continuous functions on $[0,1]$ equipped with the uniform topology. 
Here, we state our first main result, the proof is given in Section \ref{sec31}.
\begin{theorem}\label{main1}
We fix  $ \alpha \in (0,1]$ and $\beta <\alpha$.
Let $\{ X^{\epsilon} \}_{\epsilon > 0}$ be a family of real valued $\alpha$-\Hd \ continuous semi-martingales on $[0,1]$ and $\{ A^{\epsilon} \}_{\epsilon > 0}$ be a family of real valued adapted continuous processes on $[0,1]$ such that $A^{\epsilon}\cdot X^{\epsilon} \in C^{\alpha\text{-Hld}}([0,1],\bbR)$.
Assume that  $ \{ (A^{\epsilon} ,X^{\epsilon}) \}_{\epsilon > 0}$ satisfies the LDP on $C([0,1],\bbR) \times C^{\alpha\text{-Hld}}_0([0,1], \mathbb{R})$ with good rate function $I^{\#}$.

Then if $\{X^{\epsilon}\}_{\epsilon > 0}$ is $\alpha$-Uniformly Exponentially Tight, $\{ (A^{\epsilon},X^{\epsilon} ,A^{\epsilon} \cdot X^{\epsilon} )\}_{\epsilon > 0}$ satisfies the LDP on $ C ([0,1], \mathbb{R}) \times C_0^{\alpha \text{-Hld}}([0,1], \mathbb{R}) \times C_0^{\beta \text{-Hld}}  ([0,1], \mathbb{R})$ with good rate function $I$;
\[
I(a,x,z) = \begin{cases}
    I^{\#}(a,x), & z = a \cdot x, \ x \in \textbf{BV}, \\
    + \infty, & \text{otherwise}.
\end{cases}
\]
\end{theorem}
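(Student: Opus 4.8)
The plan is to first obtain the LDP for the triple in the weaker topology of uniform convergence via Lemma~\ref{Ga}, then to establish exponential tightness of the triple in the stronger \Hd\ topology, and finally to combine the two through the inverse contraction principle.

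\emph{Step 1: LDP in the uniform topology.} I would extend $X^\epsilon$ and $A^\epsilon$ to $[0,\infty)$ by freezing them at time $1$, so that $\{X^\epsilon\}$ becomes a family of $[0,\infty)$-semimartingales and $A^\epsilon\cdot X^\epsilon$ freezes accordingly. The $\alpha$-Uniformly Exponentially Tight property, combined with the elementary bound $\sup_{s\le 1}|(U_{-}\cdot X^\epsilon)_s|\le \|U_{-}\cdot X^\epsilon\|_{\alpha\text{-Hld}}$ already used in the preceding Remark, shows that the frozen $\{X^\epsilon\}$ is uniformly exponentially tight in the sense of Definition~\ref{UUET}. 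Since the inclusions $C_0^{\alpha\text{-Hld}}([0,1],\bbR)\hookrightarrow C([0,1],\bbR)\hookrightarrow D([0,\infty),\bbR)$ are continuous, the assumed LDP for $\{(A^\epsilon,X^\epsilon)\}$ transports by the contraction principle to an LDP on $D([0,\infty),\bbR)^2$, and Lemma~\ref{Ga} then yields the LDP for $\{(A^\epsilon,X^\epsilon,A^\epsilon\cdot X^\epsilon)\}$ on $D([0,\infty),\bbR)^3$ with rate function $I$. As all the laws are carried by continuous paths frozen after time $1$ --- on which the Skorohod and uniform topologies coincide --- and $I\equiv+\infty$ off this set, this restricts to an LDP for $\{(A^\epsilon,X^\epsilon,A^\epsilon\cdot X^\epsilon)\}$ on $C([0,1],\bbR)^3$ with the uniform topology and rate $I$.

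\emph{Step 2: exponential tightness in the \Hd\ topology.} Since a product of compacts is compact, it suffices to exhibit exponentially good compact sets for each coordinate in $C([0,1],\bbR)\times C_0^{\alpha\text{-Hld}}\times C_0^{\beta\text{-Hld}}$. For $\{A^\epsilon\}$ in $C([0,1],\bbR)$ and $\{X^\epsilon\}$ in $C_0^{\alpha\text{-Hld}}$ this is a consequence of the assumed LDP, because an LDP with a good rate function on a Polish space is exponentially tight (see \cite{DeZe}) and these spaces are separable Banach spaces. The core of the step is exponential tightness of $\{A^\epsilon\cdot X^\epsilon\}$ in $C_0^{\beta\text{-Hld}}$. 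Fix $M>0$; by the exponential tightness of $\{A^\epsilon\}$ in $C([0,1],\bbR)$ pick $L=L_M$ with $\limsup_\epsilon \epsilon\log\PROB{\sup_{t\in[0,1]}|A^\epsilon_t|\ge L}\le -M$, and set $\tau_L:=\inf\{t:|A^\epsilon_t|\ge L\}$. Then $U:=L^{-1}A^\epsilon\mathbf{1}_{[0,\tau_L]}$ lies in $\mathcal{B}([0,1],\bbR)$ and $U\cdot X^\epsilon=L^{-1}(A^\epsilon\cdot X^\epsilon)^{\tau_L}$, which agrees with $L^{-1}(A^\epsilon\cdot X^\epsilon)$ on $[0,1]$ on the event $\{\sup_{t\in[0,1]}|A^\epsilon_t|<L\}$; hence applying Definition~\ref{uet2} at level $M$ to this $U$ and combining with the previous estimate produces $R_M:=L_M K_M$ with $\limsup_\epsilon\epsilon\log\PROB{\|A^\epsilon\cdot X^\epsilon\|_{\alpha\text{-Hld}}\ge R_M}\le -M$. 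Because $A^\epsilon\cdot X^\epsilon$ vanishes at $0$, the set $\{x : x_0=0,\ \|x\|_{\alpha\text{-Hld}}\le R_M\}$ is, for $\beta<\alpha$, relatively compact in $C_0^{\beta\text{-Hld}}$ with closure contained in $C_0^{\beta\text{-Hld}}$ (the compact embedding of bounded sets of $C^{\alpha\text{-Hld}}$ into $C_0^{\beta\text{-Hld}}$; see \cite{Ha}), and these closures are the required compact sets.

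\emph{Step 3: conclusion.} The inclusion map $C([0,1],\bbR)\times C_0^{\alpha\text{-Hld}}\times C_0^{\beta\text{-Hld}}\hookrightarrow C([0,1],\bbR)^3$ is continuous; by Step 2 the laws of $\{(A^\epsilon,X^\epsilon,A^\epsilon\cdot X^\epsilon)\}$ are exponentially tight on the finer space, and by Step 1 their images under the inclusion satisfy the LDP on $C([0,1],\bbR)^3$ with good rate function $I$. The inverse contraction principle (\cite{DeZe}, Theorem~4.2.4) then gives the LDP for $\{(A^\epsilon,X^\epsilon,A^\epsilon\cdot X^\epsilon)\}$ on $C([0,1],\bbR)\times C_0^{\alpha\text{-Hld}}\times C_0^{\beta\text{-Hld}}$ with the same --- now automatically good --- rate function $I$, which is the asserted expression. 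I expect the main obstacle to be Step 2 for the integral component: the definition of $\alpha$-Uniformly Exponential Tightness only controls integrands bounded by $1$, so passing to the unbounded integrand $A^\epsilon$ forces the localization at $\tau_L$, a separate exponential estimate on $\PROB{\sup_{t\in[0,1]}|A^\epsilon_t|\ge L}$, and a verification that on the good event the stopped integral really coincides with $A^\epsilon\cdot X^\epsilon$; by contrast, the compactness of \Hd\ balls in the slightly weaker \Hd\ topology and the final contraction argument are standard.
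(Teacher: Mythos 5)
Your proposal is correct in substance and shares the paper's overall skeleton (transfer to the Skorohod/uniform setting, apply Lemma~\ref{Ga}, restrict back to paths on $[0,1]$, then upgrade to the \Hd\ topology by exponential tightness plus the inverse contraction principle, Theorem~4.2.4 in \cite{DeZe}); your Steps 1 and 3 are essentially the paper's argument with $F_1,F_2,F_3$ and Lemma~4.1.5(b) of \cite{DeZe}. Where you genuinely diverge is the exponential tightness of $\{A^\epsilon\cdot X^\epsilon\}$ in $C_0^{\beta\text{-Hld}}$: the paper obtains it through the discretization maps $G_\delta$, the almost-compactness Lemma~\ref{G} (the long nine-case \Hd\ estimate), the exponentially good approximation Lemma~\ref{ega}, and Theorem~7.1 of \cite{Ga08}, whereas you localize $A^\epsilon$ at the stopping time $\tau_L$, feed the normalized stopped integrand into Definition~\ref{uet2}, and invoke the compact embedding of $\alpha$-\Hd\ balls (with $x_0=0$) into $C_0^{\beta\text{-Hld}}$. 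This shortcut is legitimate precisely because $\alpha$-Uniform Exponential Tightness controls the full $\alpha$-\Hd\ norm of $U\cdot X^\epsilon$ for every $U\in\mathcal{B}([0,1],\bbR)$, and because the drop from $\alpha$ to $\beta$ turns the resulting norm bound into compactness; combined with the sup-norm exponential tightness of $\{A^\epsilon\}$ (from the assumed LDP with good rate function on a Polish space) and the elementary bound $\limsup_\epsilon\epsilon\log(a_\epsilon+b_\epsilon)\le\max(\limsup_\epsilon\epsilon\log a_\epsilon,\limsup_\epsilon\epsilon\log b_\epsilon)$, it gives exactly the estimate the paper extracts from its approximation machinery. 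What the paper's route buys is an exponentially good approximation of $A^\epsilon\cdot X^\epsilon$ by simple integrands, which is of independent use (e.g.\ it could identify the rate function via approximation theorems); since the paper only uses it for exponential tightness, your argument is a genuinely shorter proof of the same theorem.

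Two small repairs are needed in your Step 2. First, with $U:=L^{-1}A^\epsilon\mathbf{1}_{[0,\tau_L]}$ one has $|U_0|>1$ on the event $\{|A^\epsilon_0|>L\}$ (where $\tau_L=0$), so $U$ need not lie in $\mathcal{B}([0,1],\bbR)$; replace $A^\epsilon$ by its truncation at level $L$ (or intersect with $\{\sup_{t\in[0,1]}|A^\epsilon_t|<L\}$, which is already part of your bad event) --- the process $L^{-1}\bigl((A^\epsilon\vee(-L))\wedge L\bigr)\mathbf{1}_{[0,\tau_L]}$ is adapted, left continuous with right limits, bounded by $1$, and its integral still coincides with $L^{-1}A^\epsilon\cdot X^\epsilon$ on $[0,1]$ on the good event, by the stopping/local property of the It\^{o} integral. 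Second, the identification $U\cdot X^\epsilon=L^{-1}(A^\epsilon\cdot X^\epsilon)^{\tau_L}$ and the agreement on the good event are almost-sure statements, so the chain of probability estimates should be phrased accordingly; this is routine. With these adjustments your exponential tightness bound, and hence the whole proof, goes through.
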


\begin{remark}
    Let $V$ is an adapted continuous process.
    Note that if  $V \in C^{\alpha\text{-Hld}}([0,1],\bbR)$ and $\beta<\alpha$, then $V \in C^{\beta\text{-Hld}}_0([0,1],\bbR)$. 
    Note also that since  $V$ is an adapted continuous process, we have that 
    \[
    \|V\|_{\beta \text{-Hld}} = \sup_{0\le s < t \le1 , s,t \in \bbQ} \frac{|V_t -V_s|}{|t-s|^\beta}
    \] 
    is $\mathcal{F} /\mathcal{B} (\bbR)$-measurable.
    Since $C^{\beta\text{-Hld}}_0([0,1],\bbR)$ is a separable Banach space, we conclude that $V$ is $\mathcal{F}/\mathcal{B} (C^{\beta\text{-Hld}}_0([0,1],\bbR))$-measurable. 
\end{remark}

One of the most important family of $\alpha$-Uniformly Exponentially Tight semi-martingales is constructed from scaled Brownian motions. 
The proof is deferred to Section \ref{sec31}.

\begin{proposition}\label{main2}
We fix $  \alpha \in [1/3, 1/2)$. 
Let $B$ be an $\bbR$-valued  standard Brownian motion on $[0,\infty)$ and assume that $B$ is $(\mathcal{F}_t)$-adapted. Let $B^{\epsilon} := \epsilon^{1/2} B$ and $\bar{B}^{\epsilon} := B_{\epsilon \cdot}$. 
Then we have that:

\begin{enumerate}
    \item[(i)] for all $(\mathcal{F}_t)$-adapted continuous processes $A$ on $[0,1]$, we have $A\cdot B^{\epsilon} \in C^{\alpha\text{-Hld}}([0,1],\bbR)$, and  $\{B^{\epsilon}\}_{\epsilon>0}$ is  $\alpha$-Uniformly Exponentially Tight: for all $M>0$, there exists $K_M>0$ such that
\begin{equation*}
 \limsup_{\epsilon \searrow 0 } \epsilon \log  \sup_{U \in \mathcal{B} ([0,1],\bbR ) }  \PROB{\|U \cdot B^{\epsilon} \|_{\alpha  \text{-Hld}} \ge K_M } \le - M.
\end{equation*}
    \item[(ii)] Let $\mathcal{F}^\epsilon_t := \mathcal{F}_{\epsilon t}$. Then for all $(\mathcal{F}^\epsilon_t)$-adapted continuous processes $\bar{A}$ on $[0,1]$, we have $\bar{A} \cdot \bar{B}^{\epsilon} \in C^{\alpha\text{-Hld}}([0,1],\bbR)$ and $\{ \bar{B}^{\epsilon}\}_{\epsilon>0}$ is  $\alpha$-Uniformly Exponentially Tight.
\end{enumerate}
\end{proposition}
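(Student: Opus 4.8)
The plan is to split each assertion into a pathwise regularity part, $A\cdot B^{\epsilon}\in C^{\alpha\text{-Hld}}([0,1],\bbR)$, and an exponential tail part, the latter being where the work lies; part (ii) then reduces to (i), since $\bar B^{\epsilon}=B_{\epsilon\cdot}$ is an $(\mathcal F^{\epsilon}_t)$-continuous martingale with $\langle\bar B^{\epsilon}\rangle_t=\epsilon t$, so for $(\mathcal F^{\epsilon}_t)$-adapted $\bar A$ with $\sup_{[0,1]}|\bar A|\le1$ the martingale $\bar A\cdot\bar B^{\epsilon}$ has bracket at most $\epsilon$ at time $1$ and the estimates below apply with no change. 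For the regularity part I would use Dambis--Dubins--Schwarz: $A\cdot B$ is a continuous local martingale with $\langle A\cdot B\rangle_t=\int_0^t A_r^2\,\D r$, which is $C^1$ in $t$ with derivative bounded on $[0,1]$ by a finite random constant, so $A\cdot B=\beta_{\langle A\cdot B\rangle}$ for a Brownian motion $\beta$ on a possibly enlarged space; composing an a.s.\ $\alpha$-H\"older Brownian path with the Lipschitz time change gives $A\cdot B\in C^{\alpha\text{-Hld}}([0,1],\bbR)$, hence so is $A\cdot B^{\epsilon}=\epsilon^{1/2}(A\cdot B)$. (Alternatively one localizes at $\tau_N=\inf\{t:|A_t|\ge N\}\wedge1$ and invokes Kolmogorov's criterion with the moment bounds obtained below.)

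For the tail part, fix $U\in\mathcal B([0,1],\bbR)$ and set $M:=U\cdot B$; since $\sup|U|\le1$ this is a true $L^q$-martingale for every $q$, with $M_0=0$ and $\langle M\rangle_t-\langle M\rangle_s\le t-s$. Burkholder--Davis--Gundy gives $\EXP{|M_t-M_s|^{2p}}\le(Cp)^p|t-s|^p$ for all $p\ge1$, with $C$ universal and uniformly over such $U$, which is equivalent to a sub-Gaussian increment bound $\EXP{\exp(c_0|M_t-M_s|^2/|t-s|)}\le2$ for a universal $c_0>0$. I would then apply the Garsia--Rodemich--Rumsey inequality with $\Psi(x):=\exp(c_0x^2)-1$ and $p(u):=u^{1/2}$: the random variable
\[
\Xi:=\iint_{[0,1]^2}\Psi\!\Big(\frac{|M_u-M_v|}{|u-v|^{1/2}}\Big)\,\D u\,\D v
\]
satisfies $\EXP{\Xi}\le1$, and GRR yields $|M_t-M_s|\le 8\int_0^{|t-s|}\Psi^{-1}(4\Xi/u^2)\,\D p(u)$; using $\Psi^{-1}(y)=(c_0^{-1}\log(1+y))^{1/2}$ together with the elementary estimate $\int_0^h(\log(1+4\Xi/u^2))^{1/2}u^{-1/2}\,\D u\le C_{\alpha}(1+\sqrt{\log(1+\Xi)})h^{\alpha}$, valid for $h\le1$ precisely because $\alpha<1/2$, one obtains $\|M\|_{\alpha\text{-Hld}}\le C_{\alpha}(1+\sqrt{\log(1+\Xi)})$ with $C_{\alpha}$ depending on $\alpha$ only.

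Markov's inequality applied to $\Xi$ then gives, for $\lambda\ge2C_{\alpha}$,
\[
\sup_{U\in\mathcal B([0,1],\bbR)}\PROB{\|U\cdot B\|_{\alpha\text{-Hld}}\ge\lambda}\le C\exp\!\big(-\lambda^2/(4C_{\alpha}^2)\big),
\]
with $C,C_{\alpha}$ depending on $\alpha$ alone. Since $U\cdot B^{\epsilon}=\epsilon^{1/2}(U\cdot B)$, taking $\lambda=K\epsilon^{-1/2}$ gives, for $\epsilon$ small, $\sup_{U}\PROB{\|U\cdot B^{\epsilon}\|_{\alpha\text{-Hld}}\ge K}\le C\exp(-K^2/(4C_{\alpha}^2\epsilon))$, hence $\limsup_{\epsilon\searrow0}\epsilon\log(\cdot)\le-K^2/(4C_{\alpha}^2)$; choosing $K_M:=2C_{\alpha}\sqrt M$ proves (i), and running the same computation on $\epsilon^{-1/2}(\bar A\cdot\bar B^{\epsilon})$, whose bracket at time $1$ is $\le1$, proves (ii).

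The main obstacle is the GRR step: one must push the bound $\|M\|_{\alpha\text{-Hld}}\le C_{\alpha}(1+\sqrt{\log(1+\Xi)})$ through with $C_{\alpha}$ and $c_0$ depending on $\alpha$ only, so that the sole probabilistic input is the universal sub-Gaussian increment estimate coming from $\langle M\rangle_1\le1$; this uniformity over $U$ is precisely what the definition of $\alpha$-Uniformly Exponentially Tight demands, and is the reason one must work with the H\"older norm rather than the uniform norm. The lower bound $\alpha\ge1/3$ plays no role in this proof and is imposed only for compatibility with the rough-path constructions used elsewhere in the paper.
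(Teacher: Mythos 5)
Your argument is correct, and for the part that carries the real weight it takes a genuinely different route from the paper. For the pathwise regularity you do essentially what the paper does: represent $A\cdot B$ via Dambis--Dubins--Schwarz and compose an a.s.\ $\alpha$-H\"older Brownian path with the (randomly) Lipschitz time change $\langle A\cdot B\rangle$ (the paper localizes with stopping times $\tau_n$ to make $A$ bounded and to force $\langle \tilde A^{(n)}\cdot B\rangle_\infty=\infty$; your ``possibly enlarged space'' remark covers the same point). For the exponential tail, however, the paper again uses DDS: writing $U\cdot B=\bar B'_{\langle U\cdot B\rangle}$ with $\langle U\cdot B\rangle_t-\langle U\cdot B\rangle_s\le t-s$ it gets $\|U\cdot B\|_{\alpha\text{-Hld}}\le\|\bar B'\|_{\alpha\text{-Hld}}$ and then cites the Gaussian tail of the Brownian H\"older norm (Corollary 13.14 in \cite{FV10}); uniformity in $U$ comes from the fact that $\bar B'$ is always a standard Brownian motion. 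You instead derive the uniform sub-Gaussian increment bound from BDG (with constants $(Cp)^p$ at exponent $2p$), feed it into Garsia--Rodemich--Rumsey with $\Psi(x)=e^{c_0x^2}-1$, $p(u)=u^{1/2}$, and extract $\|M\|_{\alpha\text{-Hld}}\le C_\alpha(1+\sqrt{\log(1+\Xi)})$ with $\EXP{\Xi}\le1$, after which Markov and Brownian scaling give the claim; the restriction $\alpha<1/2$ enters exactly where you say, in the convergence of the GRR integral. Your route is self-contained (it does not outsource the H\"older-norm tail to \cite{FV10}), makes the uniformity over $U$ completely explicit since the only input is $\langle M\rangle_t-\langle M\rangle_s\le t-s$, and handles (ii) by the same estimate applied to $\epsilon^{-1/2}(\bar A\cdot\bar B^\epsilon)$, whereas the paper treats (ii) separately with another DDS time change plus a moment/Gaussian-tail argument (Lemma A.17 in \cite{FV10}); the paper's route is shorter on the page precisely because it leans on those cited tail estimates. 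Your closing observations --- that the H\"older (rather than uniform) norm is what the definition of $\alpha$-Uniform Exponential Tightness forces, and that $\alpha\ge1/3$ is irrelevant here --- are both accurate.
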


\subsection{Large deviation principle for stochastic differential equations driven by stochastic integrals in one dimension}

In this section, we will discuss how to derive the LDP for SDEs driven by stochastic integrals in one dimension from Theorem \ref{main1}. 
Consider the Stratonovich SDEs in one dimension:
\begin{align}\label{SDEs}
\D Y_t =    \sigma_1(Y_t ) \circ A_t \D X_t + \sigma_2 (Y_t) \tilde{A}_t \D t, \quad  Y_0 \in \bbR,
\end{align}
where $\sigma_1 , \sigma_2 \in C^3_b$, $X$ is a one-dimensional standard  Brownian motion, and $A$ and $\tilde{A}$ are real valued adapted continuous processes respectively.
Note that we regard \eqref{SDEs} as the equation driven by a stochastic integral $A \cdot X$ and $\tilde{A}\cdot \Lambda$ where $\Lambda(t) :=t$.

Let 
 \begin{align*}
      Z= (Z^{(1)},Z^{(2)}) := ( A \cdot  X, \tilde{A} \cdot \Lambda ),
 \end{align*}
where $\cdot$ means the It\^{o} integral. Let also that 
\begin{align}\label{ZZ}
    \mathbb{Z}_{st}   : = (1,Z_{st},\mathbf{Z}_{st}), \quad 0\le s <t \le 1,
\end{align}
where for $i,j \in \{1,2\}$,
\begin{align*}
    Z_{st} := Z_t -Z_s, \quad  
    \mathbf{Z}^{(ij)}_{st} :=
    \begin{cases}
        2^{-1} (Z^{(1)}_{st})^2, & i=j=1,\\
        \int_s^t (Z^{(i)}_r - Z^{(i)}_s)  \D Z^{(j)}_r, & \text{otherwise},
    \end{cases}
\end{align*}
 and  $\mathbf{Z}$ is defined by the Young integral (see also section 9.4 in \cite{FV10}, this is the Young pairing).
 Note that by Proposition \ref{main2} (i), for $\alpha \in (1/3,1/2)$, $Z \in C^{\alpha\text{-Hld}} \times  C^{1\text{-Hld}}$ and so the Young integral is well-defined.
For $\alpha \in (1/3,1/2]$, denote by  $G \Omega^{\alpha \text{-Hld}} ([0,1],\mathbb{R}^2)$ the geometric rough path space and  $d_{\alpha}$ the metric function on $G \Omega^{\alpha \text{-Hld}} ([0,1],\mathbb{R}^2)$ (see Section 2.2 in \cite{FH}). 
One can prove that for $\alpha \in (1/3,1/2)$, $\mathbb{Z} \in G \Omega^{\alpha \text{-Hld}} ([0,1],\mathbb{R}^2)$, see the proof of Theorem \ref{main4}.

We now discuss the following type of rough differential equation (RDE) (in Lyons' sense; see Section~8.8 of \cite{FH}, for example):
\begin{equation}\label{RDE}
 \bar{Y}_t =  \int_0^t \bar{\sigma}(\bar{Y}_u) \D \mathbb{Z}_u,
\end{equation}
where $\bar{Y}_t=Y_t-Y_0$, $\bar{\sigma}(y)= ( \sigma_1(Y_0 + y) , \sigma_2(Y_0 + y))$.

\begin{theorem} \label{thm:33}
Let $\sigma_1 , \sigma_2 \in C^{3}_b$.
\begin{enumerate}
    \item[(i)]
 RDE $(\ref{RDE})$ driven by \eqref{ZZ} has a unique solution $ \bar{Y} = \Phi(\mathbb{Z},y_0)$, where
\begin{equation*}
\Phi : \Omega_{\alpha \text{-Hld}} ([0,T],\mathbb{R}^2) \times \mathbb{R} \rightarrow C^{\alpha \text{-Hld}} ([0,T],\mathbb{R})
\end{equation*}
is the solution map of \eqref{RDE} that is locally Lipschitz continuous with respect to $d_{\alpha}$.
\item[(ii)]
The  first level of the last component $\bar{Y}$ of the solution to RDE \eqref{RDE} for \eqref{ZZ}  gives the solution
$Y(\omega) = y_0 + \bar{Y}$ to the stratonovich SDE \eqref{SDEs}.
\end{enumerate}
\end{theorem}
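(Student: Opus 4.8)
The plan is to treat the two assertions separately, with part (i) resting on the standard well-posedness theory for rough differential equations and part (ii) on a Wong–Zakai type identification of the rough integral with the Stratonovich integral. For part (i), the first task is to verify that $\mathbb{Z}$ as defined in \eqref{ZZ} genuinely lies in $G\Omega^{\alpha\text{-Hld}}([0,1],\mathbb{R}^2)$ for $\alpha\in(1/3,1/2)$: by Proposition~\ref{main2}(i) we have $Z^{(1)}=A\cdot X\in C^{\alpha\text{-Hld}}$ and $Z^{(2)}=\tilde A\cdot\Lambda\in C^{1\text{-Hld}}\subset C^{\alpha\text{-Hld}}$, so the Young pairing $\mathbf{Z}$ is well defined; the multiplicativity (Chen) relation and the $2\alpha$-Hölder bound on $\mathbf{Z}$ follow from the standard Young-pairing estimates (Section~9.4 of \cite{FV10}), and geometricity holds because each component is a genuine path and the off-diagonal entries are Young integrals while the diagonal $(1,1)$ entry is $\tfrac12(Z^{(1)}_{st})^2$ by construction — so $\mathbb{Z}$ is the limit of lifts of smooth paths. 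Once $\mathbb{Z}\in G\Omega^{\alpha\text{-Hld}}$, since $\bar\sigma=(\sigma_1(Y_0+\cdot),\sigma_2(Y_0+\cdot))\in C^3_b$ (the shift by $Y_0$ does not affect boundedness of derivatives), Lyons' universal limit theorem / the fixed-point construction in Section~8.8 of \cite{FH} gives existence, uniqueness, and local Lipschitz continuity of the solution map $\Phi$ with respect to $d_\alpha$. This is essentially a citation once the setup is in place.

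For part (ii), the goal is to show that the first level of the RDE solution $\bar Y$, evaluated along the \emph{random} rough path $\mathbb{Z}(\omega)$, solves the Stratonovich SDE \eqref{SDEs} for $\mathbb{P}$-a.e.\ $\omega$. The natural route is a Wong–Zakai/approximation argument: approximate $X$ by its piecewise-linear (dyadic) interpolations $X^{(n)}$, set $Z^{(n)}:=(A\cdot X^{(n)},\tilde A\cdot\Lambda)$ — or more carefully approximate the pair $(A,X)$ jointly — and let $\mathbb{Z}^{(n)}$ be the corresponding canonical (Young-pairing) lift. For each $n$ the RDE \eqref{RDE} driven by $\mathbb{Z}^{(n)}$ is a classical ODE whose solution is the Riemann–Stieltjes/ODE solution, and on the probabilistic side the classical Wong–Zakai theorem identifies the limit of these ODE solutions with the Stratonovich SDE solution $Y$. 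On the rough-path side, if one shows $d_\alpha(\mathbb{Z}^{(n)},\mathbb{Z})\to 0$ in probability (in $\alpha'$-Hölder for some $\alpha'<\alpha$, using the uniform exponential tightness / moment bounds from Proposition~\ref{main2} to upgrade uniform convergence of the stochastic integrals to Hölder convergence), then the local Lipschitz continuity of $\Phi$ from part (i) forces the RDE solutions to converge to $\bar Y=\Phi(\mathbb{Z},y_0)$. Matching the two limits gives $Y=y_0+\bar Y$ almost surely.

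I expect the main obstacle to be establishing the rough-path convergence $\mathbb{Z}^{(n)}\to\mathbb{Z}$ in the appropriate Hölder-rough-path metric, specifically controlling the second-level (iterated-integral) component $\mathbf{Z}^{(ij)}$ built from the stochastic integral $A\cdot X$ rather than from $X$ itself. The entry $\mathbf{Z}^{(11)}_{st}=\tfrac12(Z^{(1)}_{st})^2$ is harmless since it is a deterministic function of the first level, and this is exactly the point where working in one dimension pays off; the cross terms $\mathbf{Z}^{(12)},\mathbf{Z}^{(21)}$ are Young integrals because $Z^{(2)}$ has finite variation, so they converge as soon as the first levels converge in $C^{\alpha\text{-Hld}}$. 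Thus the crux reduces to showing $A\cdot X^{(n)}\to A\cdot X$ in $\beta$-Hölder in probability for $\beta\in(1/3,\alpha)$, which one gets from the Garsia–Rodemich–Rumsey inequality together with $L^p$ estimates on $A\cdot(X-X^{(n)})$, the latter being uniform in the relevant sense thanks to the $\alpha$-Uniformly Exponentially Tightness of $\{X^\epsilon\}$ specialized to $\epsilon$ fixed. Assembling these pieces — ODE interpretation of the approximating RDEs, classical Wong–Zakai, and the Hölder convergence of lifts plus continuity of $\Phi$ — yields the identification in (ii).
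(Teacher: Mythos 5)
Your part (i) is fine and matches the paper, which simply invokes standard rough path theory (Lejay, or Chapter~8 of \cite{FH}) once $\mathbb{Z}\in G\Omega^{\alpha\text{-Hld}}$ and $\bar\sigma\in C^3_b$ are noted; the verification that the Young pairing \eqref{ZZ} is a geometric rough path is carried out in the paper inside the proof of Theorem~\ref{main4}, but including it here is harmless.

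Part (ii), however, has a genuine gap. The paper's route is to observe that $Z=(A\cdot X,\tilde A\cdot\Lambda)$ is itself a continuous semimartingale (local martingale part plus finite-variation part) and that \eqref{ZZ} is exactly its canonical Stratonovich enhancement (the $(1,1)$ entry $\tfrac12(Z^{(1)}_{st})^2$ is the Stratonovich iterated integral of the one-dimensional martingale component, and the cross terms are Young integrals against the finite-variation component); the identification of the first level of the RDE solution with the Stratonovich solution of \eqref{SDEs} is then the standard consistency theorem for RDEs driven by enhanced semimartingales (Theorem~17.3 in \cite{FV10}, Chapter~9 in \cite{FH}), with no approximation of $X$ at all. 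Your Wong--Zakai scheme through piecewise-linear interpolations $X^{(n)}$ of the Brownian motion breaks down at two points. First, the Riemann--Stieltjes integrals $A\cdot X^{(n)}$ do not converge to the It\^o integral $A\cdot X=Z^{(1)}$: their limit, when it exists, is a Stratonovich-type integral differing from $Z^{(1)}$ by a covariation correction, and for the integrands this paper is built for (e.g.\ $A=f(\hat X,\cdot)$ with $\hat X$ of H\"older regularity below $1/2$, correlated with $X$) that correction is not even finite --- the sums $\sum_k(A_{t_{k+1}}-A_{t_k})(X_{t_{k+1}}-X_{t_k})$ diverge like $n^{1/2-H}$. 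So your key claim $d_\alpha(\mathbb{Z}^{(n)},\mathbb{Z})\to0$ fails already at the first level, and the $\alpha$-Uniform Exponential Tightness of $\{X^\epsilon\}$ cannot rescue it, since that property controls integrals of bounded integrands against $X^\epsilon$, not against $X-X^{(n)}$. Second, the ``classical Wong--Zakai theorem'' you invoke applies to equations $\D Y=\sigma(Y)\circ\D X$ with coefficients depending only on $Y$; equation \eqref{SDEs} has the random, merely adapted and possibly rough coefficient $\sigma_1(Y_t)A_t$, and Wong--Zakai-type statements for such equations are precisely the delicate issue in rough volatility (where renormalization can be required), not a classical fact one may cite. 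The repair is simply to drop the approximation of $X$ and argue at the level of the semimartingale $Z$, as the paper does.
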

\begin{proof}
 These are standard results from rough path theory; see e.g., Theorem~1 in \cite{Le} or Chapter~8 in \cite{FH} for $(i)$ and Chapter~9 in \cite{FH} or Theorem~17.3 in \cite{FV10} for $(ii)$.
\end{proof}

\begin{remark}
Although the solution $\bar{Y}$ is one-dimension, the noise $Z$ is a two dimensional path and so it is not trivial whether $\bar{Y}$ can be constructed from $Z$ or not, and this is why we need to consider rough paths $\mathbb{Z}$ of $Z$.
\end{remark}

 Let $X^{\epsilon} := \epsilon^{1/2} X$, and $A^{\epsilon}, \tilde{A}^{\epsilon}$ are $(\mathcal{F}_t)$ adapted continuous processes respectively (these correspond  scaled processes of  $A,\tilde{A}$ respectively).
 Let
 \[
 Z^{\epsilon} = ((Z^{(1)})^{\epsilon},(Z^{(2)})^{\epsilon}) := ( A^{\epsilon} \cdot  X^{\epsilon}, \tilde{A}^{\epsilon} \cdot \Lambda ),
 \] 
and we define $\mathbb{Z}^{\epsilon}$ like (\ref{ZZ}).
 We now consider the following scaled SDEs:
\begin{align}\label{strat1}
\D Y^{\epsilon}_t = \sigma_1(Y^{\epsilon}_t ) \circ  A^{\epsilon}_t \D X^{\epsilon}_t 
+ \sigma_2 (Y^{\epsilon}_t) \tilde{A}^{\epsilon}_t \D t,
\end{align}
We state the second main result, the proof is given in Section \ref{sec32}.
\begin{theorem}\label{main4}
 We fix $\alpha \in [1/3, 1/2)$. 
 Assume that  there exists $\alpha' \in [1/3,1/2)$ with $\alpha' > \alpha$ such that $\{(A^{\epsilon}, \tilde{A}^{\epsilon},X^{\epsilon} ) \}_{\epsilon > 0}$ satisfies the LDP on $C ([0,1], \mathbb{R})\times C ([0,1], \mathbb{R}) \times C_0^{\alpha'\text{-Hld}} ([0,1], \mathbb{R})$ with good rate function $J^{\#}$. 
 
 Then $\{ Y^{\epsilon} \}_{\epsilon > 0}$ satisfies the LDP on $C^{\alpha \text{-Hld}} ([0,1] , \mathbb{R})$ with good rate function
 \begin{align*}
  J (y) := \inf \left \{ J^{\#} (a,\tilde{a},x) :  y = \Phi \circ F ( a \cdot x, \tilde{a} \cdot \Lambda), \  x \in \textbf{BV} \right \},
 \end{align*}
 where 
 \begin{align}\label{defF}
     F(z)_{st}  := (1,z_{st}, \mathbf{z}_{st}),
 \end{align}
 and for $i,j \in \{1,2\}$,
\begin{align*}
    z^{(i)}_{st} := z^{(i)}_t -z^{(i)}_s,\quad 
\mathbf{z}_{st}^{(ij)} := 
\begin{cases}
       2^{-1}(z_{st}^{(i)})^2, & i=j=1,\\
        \int_s^t ( z_r^{(i)} - z_s^{(i)}) \D z_r^{(j)}, & \text{otherwise},
\end{cases}
\end{align*}
and  $\mathbf{z}$ is defined by the Young integral.
\end{theorem}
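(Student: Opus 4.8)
The plan is to make rigorous the diagram from the Introduction. First I would promote the given LDP for $(A^{\epsilon},\tilde A^{\epsilon},X^{\epsilon})$ to a joint LDP that also carries the It\^o driver $Z^{\epsilon}=(A^{\epsilon}\cdot X^{\epsilon},\tilde A^{\epsilon}\cdot\Lambda)$; then I would push this LDP forward through the Young pairing $F$ of \eqref{defF} and through the rough-path solution map $\Phi$ of Theorem~\ref{thm:33}, both of which are continuous on the relevant spaces; finally the rate function is read off from the contraction principle. Throughout I fix an auxiliary exponent $\gamma$ with $\max(\alpha,1-\alpha)<\gamma<1$ (used only to record the regularity of the drift integral), and I assume $\alpha\in(1/3,1/2)$; the endpoint $\alpha=1/3$ then follows by running the same argument with some $\beta\in(1/3,\alpha')$ in place of $\alpha$ and composing with the injective continuous embedding $C^{\beta\text{-Hld}}([0,1],\bbR)\hookrightarrow C^{1/3\text{-Hld}}([0,1],\bbR)$, which leaves the rate function unchanged.

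\emph{Step 1 (joint LDP for the drivers).} Applying Proposition~\ref{main2}(i) with $\alpha'$ in place of $\alpha$ shows that $\{X^{\epsilon}\}_{\epsilon>0}$ is $\alpha'$-Uniformly Exponentially Tight and $A^{\epsilon}\cdot X^{\epsilon}\in C^{\alpha'\text{-Hld}}([0,1],\bbR)$ almost surely; moreover $\tilde A^{\epsilon}\cdot\Lambda=\int_{0}^{\cdot}\tilde A^{\epsilon}_{r}\,\D r$ is of class $C^{1}$, hence a.s.\ in $C^{\gamma\text{-Hld}}_{0}([0,1],\bbR)$, and $\tilde a\mapsto\tilde a\cdot\Lambda$ is continuous from $C([0,1],\bbR)$ into $C^{\gamma\text{-Hld}}_{0}([0,1],\bbR)$ (the $\gamma$-H\"older seminorm of $\int_{0}^{\cdot}(\tilde a-\tilde a')_{r}\,\D r$ is at most $\|\tilde a-\tilde a'\|_{\infty}$). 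By a straightforward extension of Theorem~\ref{main1} allowing auxiliary adapted processes to be carried along (its proof is unchanged, the integrand and its companion merely appearing unchanged in the conclusion), applied with its two exponents taken to be $\alpha'$ and $\beta=\alpha$ (permissible since $\alpha<\alpha'$), with $X^{\epsilon}$ as integrator and $A^{\epsilon}$ as integrand and $\tilde A^{\epsilon}$ riding along, and then adjoining $\tilde A^{\epsilon}\cdot\Lambda$ through the contraction principle, one obtains the LDP for
\[
W^{\epsilon}:=\bigl(A^{\epsilon},\,\tilde A^{\epsilon},\,X^{\epsilon},\,Z^{\epsilon}\bigr),\qquad Z^{\epsilon}=\bigl(A^{\epsilon}\cdot X^{\epsilon},\ \tilde A^{\epsilon}\cdot\Lambda\bigr),
\]
on $C([0,1],\bbR)\times C([0,1],\bbR)\times C^{\alpha'\text{-Hld}}_{0}([0,1],\bbR)\times\bigl(C^{\alpha\text{-Hld}}_{0}([0,1],\bbR)\times C^{\gamma\text{-Hld}}_{0}([0,1],\bbR)\bigr)$, with good rate function $\tilde J(a,\tilde a,x,z)=J^{\#}(a,\tilde a,x)$ when $z=(a\cdot x,\ \tilde a\cdot\Lambda)$ with $x\in\textbf{BV}$ (the first integral being Riemann--Stieltjes), and $\tilde J=+\infty$ otherwise.

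\emph{Steps 2--3 (lift, solve, contract).} Since a.s.\ $Z^{(1),\epsilon}=A^{\epsilon}\cdot X^{\epsilon}\in C^{\alpha\text{-Hld}}_{0}$ and $Z^{(2),\epsilon}=\tilde A^{\epsilon}\cdot\Lambda\in C^{\gamma\text{-Hld}}_{0}$ with $\alpha+\gamma>1$ and $\gamma>\alpha$, the Young integrals defining the second level $\mathbf z$ are well defined, and the Young pairing $F\colon C^{\alpha\text{-Hld}}_{0}([0,1],\bbR)\times C^{\gamma\text{-Hld}}_{0}([0,1],\bbR)\to G\Omega^{\alpha\text{-Hld}}([0,1],\bbR^{2})$ is well defined and $d_{\alpha}$-continuous (Section~9.4 of \cite{FV10}): indeed $|\mathbf z^{(11)}_{st}|\lesssim|t-s|^{2\alpha}$, $|\mathbf z^{(12)}_{st}|,|\mathbf z^{(21)}_{st}|\lesssim|t-s|^{\alpha+\gamma}$, $|\mathbf z^{(22)}_{st}|\lesssim|t-s|^{2\gamma}$, all exponents being $\ge2\alpha$; the only genuinely rough component is the one-dimensional $z^{(1)}$, for which $\tfrac12(z^{(1)}_{st})^{2}$ is the canonical geometric second level; and Chen's relations and geometricity follow from additivity of the integrals and from Young integration by parts. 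Hence $\mathbb Z^{\epsilon}:=F(Z^{\epsilon})$ coincides a.s.\ with the rough path defined as in \eqref{ZZ} and lies a.s.\ in $G\Omega^{\alpha\text{-Hld}}([0,1],\bbR^{2})$. By Theorem~\ref{thm:33}(i), $\Phi(\cdot,Y_{0})\colon G\Omega^{\alpha\text{-Hld}}([0,1],\bbR^{2})\to C^{\alpha\text{-Hld}}([0,1],\bbR)$ is locally Lipschitz, hence continuous, for $d_{\alpha}$; and by Theorem~\ref{thm:33}(ii) applied with $(A^{\epsilon},\tilde A^{\epsilon},X^{\epsilon})$ in place of $(A,\tilde A,X)$, the solution of \eqref{strat1} equals $Y^{\epsilon}=\Phi\circ F(Z^{\epsilon})$ a.s.\ (with the initial value incorporated as in Theorem~\ref{thm:33}). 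Therefore $Y^{\epsilon}=g(W^{\epsilon})$ for the continuous map $g(a,\tilde a,x,z):=\Phi\circ F(z)$, and the contraction principle (Theorem~4.2.1 of \cite{DeZe}) applied to the LDP of Step~1 yields the LDP for $\{Y^{\epsilon}\}_{\epsilon>0}$ on $C^{\alpha\text{-Hld}}([0,1],\bbR)$ with good rate function
\[
J(y)=\inf\bigl\{\tilde J(a,\tilde a,x,z):\ y=g(a,\tilde a,x,z)\bigr\}=\inf\bigl\{J^{\#}(a,\tilde a,x):\ y=\Phi\circ F\bigl(a\cdot x,\ \tilde a\cdot\Lambda\bigr),\ x\in\textbf{BV}\bigr\},
\]
the second equality using that $\tilde J$ is finite only on the indicated constraint set; there the requirement that $a\cdot x$ itself be $\alpha$-H\"older is automatic, since finiteness of $\tilde J$ forces $a\cdot x=z^{(1)}\in C^{\alpha\text{-Hld}}_{0}$, and goodness of $J$ is inherited from that of $J^{\#}$ together with continuity of $g$. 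This is precisely the asserted rate function.

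\emph{Main obstacle.} The genuinely hard analytic input, the small-noise LDP for the stochastic integral in the H\"older topology, has already been isolated as Theorem~\ref{main1}; within the present proof the delicate step is Step~2, namely checking that the Young pairing $F$ takes values in, and is continuous into, the $\alpha$-H\"older \emph{geometric} rough path space. This is exactly where the hypothesis $\alpha'>\alpha$ is used: one needs $A^{\epsilon}\cdot X^{\epsilon}$ to be $\alpha$-H\"older (while $X^{\epsilon}$ is only $\alpha'$-H\"older), so that the mixed second-level increments $\mathbf z^{(12)},\mathbf z^{(21)}$ retain an exponent $\ge2\alpha$, as required for an $\alpha$-H\"older rough path. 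The remaining ingredients --- the extension of Theorem~\ref{main1} to carry $\tilde A^{\epsilon}$ and to absorb the smooth drift $\tilde A^{\epsilon}\cdot\Lambda$, and the passage through the RDE solution map --- are routine book-keeping and standard applications of the contraction principle.
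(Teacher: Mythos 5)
Your proposal is correct and follows essentially the same route as the paper's proof: Proposition~\ref{main2}(i) together with (the auxiliary-process extension of) Theorem~\ref{main1} give the joint H\"older-topology LDP for the drivers, and then successive applications of the contraction principle through $\tilde a \mapsto \tilde a \cdot \Lambda$, the Young pairing $F$, and the RDE solution map $\Phi$, identified with the SDE solution via Theorem~\ref{thm:33}, yield the stated rate function. The only differences are bookkeeping of exponents: the paper sends $A^{\epsilon}\cdot X^{\epsilon}$ into $C_0^{\alpha''\text{-Hld}}$ for an intermediate $\alpha''\in(\alpha,\alpha')$ and $\tilde A^{\epsilon}\cdot\Lambda$ into $C^{1\text{-Hld}}$, whereas you take $\beta=\alpha$ (in the little-H\"older space) and $\gamma<1$, and you treat the endpoint $\alpha=1/3$ by an extra embedding.
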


\section{An application for mathematical finance}\label{finance}

\subsection{Small noise asymptotics for rough volatility models \eqref{rSABR}}
We now discuss an application of Theorem \ref{main4}. Let $\kappa :(0,1] \rightarrow [0 , \infty) $ as
\begin{align*}
    \kappa (t) := g(t) t^{\gamma - \alpha}, \quad t \in (0,1], 
\end{align*}
where $\alpha,\gamma \in(0,1)$ and $g$ is a Lipschitz function.
Let $\mathcal{K}: C^{\alpha \text{-Hld}} ([0,1],\bbR)\rightarrow C^{\gamma \text{-Hld}}([0,1],\bbR)$ as
\begin{align*}
   \mathcal{K} f(t) 
& := \lim_{\epsilon \searrow 0} \left \{ \left[\kappa(t-\cdot) (f(\cdot) - f(t)) \right]^{t-\epsilon}_0 
+ \int_0^{t-\epsilon} (f(s) - f(t)) \kappa'(t-s) \D s \right\}\\
& = \kappa(t) (f(t) - f(0)) + \int_0^t (f(s)- f(t) ) \kappa' (t-s) \D s.
\end{align*} 
This map is called the fractional integral for $\gamma > \alpha$ and the fractional derivative for $\gamma < \alpha$, see  \cite{Fu23} for details. 
For simplicity, let $\mu : = \gamma - \alpha$.

\begin{remark}\label{kernel}
Because of the existence of Lipschitz part $g$, $\kappa$ has sufficient generality for applications. For example, we can take the following singular kernels.
\begin{enumerate}
\item the Riemann--Liouville kernel 
\begin{align}\label{RLk}
      \kappa_H (t) := t^{H-1/2}, \quad  t \in (0,1], \ H \in (0,1/2)
\end{align}
has the above form ($\mu =  H - 1/2$). 
\item the Gamma fractional 
\begin{align*}
      \kappa(t) := t^{\mu } \exp{(ct)}, \quad  t \in (0,1], \  \mu \in (-1,1), \ c <0,
\end{align*}
\item Power-law 
\begin{align*}
      \kappa(t) := t^{\mu} (1+t)^{\beta - \mu}, \quad  t \in (0,1], \ \mu \in (-1,1), \ \beta <-1.
\end{align*}
\end{enumerate}
For convenience, we denote $\mathcal{K}_0$ by $\mathcal{K}$ associated with the Riemann--Liouville kernel   $\kappa_H$, which means $\mathcal{K}_0$ is the usual fractional operator. 
\end{remark}

We fix $\alpha \in (0,1/2)$ and $\gamma \in (0,1)$ ($\alpha$ and $\gamma$ are the parameters of $\mathcal{K}$ respectively). 
Denote by $(W,W^{\perp})$ a two-dimensional standard Brownian motion.
Set
\begin{align}\label{defX}
    X := \rho W + \sqrt{1-\rho^2} W^{\perp},\quad  V := \Psi ( \mathcal{K} A) , \quad \rho \in [-1,1],
\end{align}
where  $A$ is the solution to the SDE
\begin{align*}
    \D A_t = b(A_t) \D t + a(A_t) \D W_t, \quad A_0 \in \bbR,
\end{align*}
$a,b \in C^4_b$, and  $\Psi: \bbR \to  \bbR $ is a nice function (see in Remark \ref{co}). 
Consider the following It\^{o} SDEs (here $Y$ represents the dynamics of the logarithm of a stock price process):
\begin{align}\label{rSABR}
\D Y_t =   \sigma(Y_t ) f(V_t ,t) \D X_t  -\frac{1}{2} \sigma^2 (Y_t) f^2(V_t, t) \D t,\quad Y_0 \in \mathbb{R}
\end{align}
where $f : \mathbb{R} \times [0,1] \rightarrow [0,\infty)$ be a nice function (see in Remark \ref{co}), and $\sigma : \mathbb{R} \rightarrow \mathbb{R}$ is  in $C^4_b$. 
In this paper, we call \eqref{rSABR} rough volatility models.
The equation can be rewrite in the Stratonovich sense:
\begin{align*}
\D Y_t = \sigma(Y_t ) \circ f(V_t ,t)  \D X_t
-\frac{1}{2} \left \{  \sigma^2 (Y_t) +  \sigma (Y_t) \sigma' (Y_t) \right \}  f^2(V_t, t) \D t 
\end{align*}
Note that we regard this SDE as the equation driven by a stochastic integral $f(V ,\cdot) \cdot X$ and $f^2(V,\cdot) \cdot \Lambda$.
For $\epsilon >0$, let $(X^{\epsilon},V^{\epsilon}) := (\epsilon^{1/2} X, \epsilon^{1/2} V)$ and consider the following SDEs:
\begin{align}\label{esrSABR}
\D Y^{\epsilon}_t  
= \sigma(Y^{\epsilon}_t   ) \circ f( V^{\epsilon}_t  ,t)  \D X^{\epsilon}_t 
 -\frac{1}{2} \left \{  \sigma^2 (Y^{\epsilon}_t ) 
+  \sigma (Y^{\epsilon}_t ) \sigma' (Y^{\epsilon}_t ) \right \}  f^2(V^{\epsilon}_t , t) \D t , \quad \epsilon >0,
\end{align}

Let 
 \[
 Z^{\epsilon} := (f(V^{\epsilon} ,\cdot) \cdot X^{\epsilon}, f^2(V^{\epsilon} , \cdot) \cdot \Lambda ), \quad \epsilon >0.
 \]

We state an application of Theorem \ref{main4} for rough volatility models, the proof is given in Section \ref{sec32}.

\begin{theorem}\label{main5}
 We fix $\alpha \in [1/3, 1/2)$ and $\gamma \in(0,1)$. 
 Assume that $x \mapsto f(x,\cdot)$ is continuous map on $C ([0,1], \mathbb{R})$, $x \mapsto \Psi (x)$ continuous map from $C^{\gamma\text{-Hld}}_0 ([0,1], \mathbb{R})$ into $C ([0,1], \mathbb{R})$  and let $Y^{\epsilon}$ is the solution of (\ref{esrSABR}).  
 Then $\{ Y^{\epsilon} \}_{\epsilon > 0}$ satisfies the LDP on $C^{\alpha \text{-Hld}} ([0,1] , \mathbb{R})$ with good rate function
 \begin{align*}
 \tilde{J}(y) := \inf \left \{ \frac{1}{2} \| (w,w^\perp) \|^2_{\mathcal{H}}  :   y = \Phi \circ F \circ  F_f \circ \mathbb{K} (w,w^\perp) ,\ (w,w^\perp) \in \mathcal{H}\right \},
 \end{align*}
 where $\mathcal{H}$ is the Cameron-Martin space on $\bbR^2$,
 \begin{align}\label{defK}
     \mathbb{K} w : = ( \Psi \mathcal{K} (a(A_0) w^{(1)} ), \rho w^{(1)} + \sqrt{1-\rho^2}w^{(2)}),
     \quad F_f(v,x) :=  \left ( f(v, \cdot ) \cdot x, f(v , \cdot )^2 \cdot \Lambda  \right),
 \end{align}
 $F$ is defined as \eqref{defX}.
\end{theorem}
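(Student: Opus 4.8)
The plan is to obtain Theorem~\ref{main5} as a concrete instance of Theorem~\ref{main4}. First I would recast the Stratonovich form of \eqref{esrSABR} into the shape \eqref{strat1}: take $\sigma_1:=\sigma$, $\sigma_2:=-\tfrac12(\sigma^2+\sigma\sigma')$, $A^{\epsilon}:=f(V^{\epsilon},\cdot)$ and $\tilde A^{\epsilon}:=f(V^{\epsilon},\cdot)^2$. Since $\sigma\in C^4_b$ one has $\sigma_1,\sigma_2\in C^3_b$; the processes $A^{\epsilon},\tilde A^{\epsilon}$ are $(\mathcal{F}_t)$-adapted and continuous because $V^{\epsilon}$ is; and $X^{\epsilon}=\epsilon^{1/2}X$ with $X=\rho W+\sqrt{1-\rho^2}W^{\perp}$ a standard Brownian motion. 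Thus the whole framework behind Theorem~\ref{main4} is in place, and it remains only to check its hypothesis --- that for some $\alpha'\in(\alpha,1/2)$, which is automatically $\ge 1/3$ since $\alpha\ge 1/3$, the family $\{(f(V^{\epsilon},\cdot),f(V^{\epsilon},\cdot)^2,X^{\epsilon})\}_{\epsilon>0}$ obeys the LDP on $C([0,1],\mathbb{R})\times C([0,1],\mathbb{R})\times C_0^{\alpha'\text{-Hld}}([0,1],\mathbb{R})$ --- and then to simplify the rate function that Theorem~\ref{main4} produces.

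I would build this input LDP by successive applications of the contraction principle, starting from Schilder's theorem on H\"older spaces for $(\epsilon^{1/2}W,\epsilon^{1/2}W^{\perp})$ on $C_0^{\alpha'\text{-Hld}}\times C_0^{\alpha'\text{-Hld}}$, with good rate function $\tfrac12\|(w,w^{\perp})\|_{\mathcal{H}}^2$. Step one is the LDP for $\{(V^{\epsilon},X^{\epsilon})\}$ on $C([0,1],\mathbb{R})\times C_0^{\alpha'\text{-Hld}}([0,1],\mathbb{R})$ with rate $\hat I(v,x)=\inf\{\tfrac12\|(w,w^{\perp})\|_{\mathcal{H}}^2:(v,x)=\mathbb{K}(w,w^{\perp})\}$: the $X^{\epsilon}$-component is the continuous linear image $\rho(\cdot)+\sqrt{1-\rho^2}(\cdot)$, while for the $V^{\epsilon}$-component one replaces the driving diffusion by its $A_0$-frozen linearisation up to exponential equivalence at speed $\epsilon^{-1}$ (the correction being exponentially negligible by a Gronwall/BDG estimate against the Gaussian tail of $W$, carried out in $\alpha'$-H\"older norm), and then transports this through the bounded operator $\mathcal{K}:C^{\alpha'\text{-Hld}}\to C^{\gamma'\text{-Hld}}\hookrightarrow C_0^{\gamma\text{-Hld}}$ for some $\gamma'>\gamma$ (using $\alpha'>\alpha$; see \cite{Fu23}) and the continuous operator $\Psi:C_0^{\gamma\text{-Hld}}\to C([0,1],\mathbb{R})$; since $V^{\epsilon}$ and $X^{\epsilon}$ are driven by correlated noises this has to be done jointly, from the two-dimensional Schilder LDP rather than from marginals, with limiting map $(w,w^{\perp})\mapsto\mathbb{K}(w,w^{\perp})$. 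Step two is one further application of the contraction principle through the continuous map $(v,x)\mapsto(f(v,\cdot),f(v,\cdot)^2,x)$ --- continuous because $v\mapsto f(v,\cdot)$ is assumed continuous on $C([0,1],\mathbb{R})$ and squaring is continuous on $C([0,1],\mathbb{R})$ --- yielding the LDP for $\{(f(V^{\epsilon},\cdot),f(V^{\epsilon},\cdot)^2,X^{\epsilon})\}$ with good rate function $J^{\#}(a,\tilde a,x)=\inf\{\hat I(v,x):f(v,\cdot)=a,\ f(v,\cdot)^2=\tilde a\}$.

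Feeding this $J^{\#}$ into Theorem~\ref{main4} gives the LDP for $\{Y^{\epsilon}\}$ on $C^{\alpha\text{-Hld}}([0,1],\mathbb{R})$ with rate $J(y)=\inf\{J^{\#}(a,\tilde a,x):y=\Phi\circ F(a\cdot x,\tilde a\cdot\Lambda),\ x\in\textbf{BV}\}$, and it remains to collapse the three nested infima into $\tilde J$. Whenever $f(v,\cdot)=a$, $f(v,\cdot)^2=\tilde a$ and $(v,x)=\mathbb{K}(w,w^{\perp})$, definition \eqref{defK} gives $F_f(v,x)=(a\cdot x,\tilde a\cdot\Lambda)$, so that $\Phi\circ F(a\cdot x,\tilde a\cdot\Lambda)=\Phi\circ F\circ F_f\circ\mathbb{K}(w,w^{\perp})$; moreover $x=\rho w+\sqrt{1-\rho^2}w^{\perp}$ is a linear combination of Cameron--Martin paths, hence automatically of bounded variation, so the side condition $x\in\textbf{BV}$ is vacuous. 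Therefore $J(y)=\inf\{\tfrac12\|(w,w^{\perp})\|_{\mathcal{H}}^2:y=\Phi\circ F\circ F_f\circ\mathbb{K}(w,w^{\perp})\}=\tilde J(y)$, and $\tilde J$ is again a good rate function, being the image of the good rate function $\tfrac12\|\cdot\|_{\mathcal{H}}^2$ under the composite $\Phi\circ F\circ F_f\circ\mathbb{K}$, which is continuous on Cameron--Martin paths (Theorem~\ref{thm:33} for $\Phi$, the Young pairing for $F$, and the continuity hypotheses for $F_f$ and $\mathbb{K}$).

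The main obstacle is step one of the second paragraph, namely the LDP for $\{V^{\epsilon}\}$ in a H\"older topology. The genuine work sits there: establishing the exponential equivalence with the $A_0$-frozen functional not merely in the uniform norm, where Gronwall/BDG estimates live naturally, but in $\alpha'$-H\"older norm --- which usually requires interpolation against the exponential tightness of the H\"older norms of the processes involved; invoking correctly the mapping properties of the singular operator $\mathcal{K}$ on H\"older scales, the delicate case being $\gamma<\alpha$ where $\mathcal{K}$ acts as a fractional derivative, and tracking the exponents so that its range lands inside the space $C_0^{\gamma\text{-Hld}}$ on which $\Psi$ is assumed continuous; and coupling $V^{\epsilon}$ with $X^{\epsilon}$ under their correlation. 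By contrast, the reduction in the first paragraph and the bookkeeping with the rate functions in the third are routine once those inputs are in hand.
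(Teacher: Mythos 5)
Your overall architecture coincides with the paper's: recast \eqref{esrSABR} in the form \eqref{strat1} with $\sigma_1=\sigma$, $\sigma_2=-\tfrac12(\sigma^2+\sigma\sigma')$, build the input LDP for $\{(f(V^\epsilon,\cdot),f(V^\epsilon,\cdot)^2,X^\epsilon)\}$ from Schilder's theorem transferred to H\"older topology (Gaussian tails plus the inverse contraction principle) and successive contractions through $\mathcal{K}$, $\Psi$ and $f$ (this is exactly the paper's Lemma \ref{BET}), then feed it into Theorem \ref{main4} via Proposition \ref{main2}(i) and collapse the nested infima; your first and third paragraphs match the paper's proof essentially line by line, including the observation that Cameron--Martin paths make the $x\in\textbf{BV}$ constraint vacuous.

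The genuine gap is in your step one, which is also the one place where you deviate from the paper (Lemma \ref{BET} performs no freezing: its proof works directly with $\epsilon^{1/2}\mathcal{K}W$ and pure contraction). You propose to replace the diffusion $A$ by its $A_0$-frozen linearisation ``up to exponential equivalence at speed $\epsilon^{-1}$'' via a Gronwall/BDG estimate. In the small-noise regime this fails: the horizon $[0,1]$ is fixed and $A$ is the \emph{unscaled} diffusion, so the error $D:=\int_0^\cdot\bigl(a(A_s)-a(A_0)\bigr)\,\D W_s+\int_0^\cdot\bigl(b(A_s)-b(A_0)\bigr)\,\D s$ does not depend on $\epsilon$ and is of order one, since $a(A_s)-a(A_0)$ never becomes small on $[0,1]$. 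Hence $\PROB{\epsilon^{1/2}\|D\|_{\infty}>\eta}=\PROB{\|D\|_{\infty}>\epsilon^{-1/2}\eta}$ admits only a Gaussian bound of the form $\exp(-c\eta^{2}/\epsilon)$, so $\epsilon\log\PROB{\epsilon^{1/2}\|D\|_{\infty}>\eta}$ tends to a finite negative constant rather than to $-\infty$: exponential equivalence at speed $\epsilon^{-1}$ is false, and no Gronwall/BDG argument can repair it because the integrand is not small. Freezing at $A_0$ is precisely the mechanism of the short-time result (Theorem \ref{short}, where $A_{\epsilon\cdot}-A_0\to0$ and the paper uses equicontinuous solution maps and the extended contraction principle); it is not available under the fixed-horizon small-noise scaling. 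A second unaddressed mismatch is the order of scaling and nonlinearity: the theorem scales $V^\epsilon=\epsilon^{1/2}V=\epsilon^{1/2}\Psi(\mathcal{K}A)$, whereas your contraction chain yields the LDP of $\Psi\bigl(\mathcal{K}(a(A_0)\epsilon^{1/2}W)\bigr)$; these agree only if $\Psi$ commutes suitably with the scaling, which is not assumed. You are right that general $A$ and $\Psi$ are the crux (the paper's Lemma \ref{BET} silently reduces to $\hat X=\mathcal{K}W$), but the fix you sketch does not establish the statement, so the proposal has a genuine gap precisely at the step you yourself single out as the main obstacle.
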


\begin{remark}[assumptions for $f$ and $\Psi$]\label{co}
If $f : \bbR \times [0,\infty) \to \bbR$ is in $C^1$-class or $f(x,t)$ does not depend on $t \in [0,\infty)$ and locally $\beta$-H\"{o}lder continuous with respect to $x \in  \bbR$ ($\beta \in (0,1]$), then $f$ satisfies the assumption of Theorem. 
Similarly, if $\Psi : \bbR \to \bbR $ is $\beta$-\Hd \ continuous, then  $\Psi$ satisfies the assumption of Theorem.
As will be discussed later (Remark \ref{comp} and Table \ref{table1}), these sufficient assumptions are weaker than these in previous works \cite{BaFrGaMaSt, FuTa, FoZh, JaPaSt}.
\end{remark}

\begin{remark}[comparison with previous studies : small-noise LDP]\label{comp1}
Although there are few previous works about small-noise LDP for rough volatility models when compered with short-time LDP, this Theorem is a natural extension of the previous work \cite{FuTa}. 
In \cite{FuTa}, the authors discussed in the case of generalized rough SABR models (the case when $f \in C^\infty$,  $\sigma \in C^4_b$, $\Psi := \text{id}$, and $A$ is a Brownian motion).
The main difference is that the proof of Theorem \ref{main5} is much simpler than that of \cite{FuTa}, because our method only uses the standard rough path theory while the method of \cite{FuTa} need to use a partial rough path theory  which is further developments of it.
Also, our theorem is more flexible than the previous result in terms of $f$, $\mathcal{K}$, $A$, and $\Psi$.
For example, one can derives the small-noise LDP for generalized rough Heston models discussed in \cite{HoJaMu} (see also Remark \ref{comp}).
\end{remark}

\subsection{Short time asymptotics for rough volatility models \eqref{rSABR}}

In this subsection, we prove the LDP for $\{ t^{\mu} (Y_t -Y_0)\}_{t>0}$ on $\bbR$ when $t \searrow 0$, where $Y$ is the solution for \eqref{rSABR}, $\mu := \gamma -\alpha $, and $\gamma ,\alpha$ are the parameter for a kernel $\kappa$. 
To do so, let 
\[
\tilde{Y}^\epsilon_t :=\epsilon^{\mu} (Y_{\epsilon t} - Y_0).
\]
 Note that 
\begin{align*}
V_{\epsilon t} = \Psi ( \mathcal{K} A_{\epsilon t} ) = \Psi ( \mathcal{K}^{\epsilon} (\epsilon^\mu (A_{\epsilon \cdot} -A_0 )_t )  = :V^\epsilon_t,
\end{align*}
where
\begin{align*}
  \mathcal{K}^\epsilon f(t) 
& := \epsilon^{-\mu} \left[    \kappa_{\epsilon} (t) (f(t) - f(0)) +  \int_0^t (f(s)- f(t) ) \frac{\D}{\D t} \kappa_{\epsilon} (t-s) \D s \right], \quad \epsilon >0,
\end{align*} 
and $\kappa_{\epsilon} (t) := \kappa (\epsilon t)$. 
Here we use the relation $\mathcal{K} A (\epsilon t) = \mathcal{K}^\epsilon ( \epsilon^{\mu} A_{\epsilon \cdot})$ in the second equality. 
By using the  change of variables for stochastic integrals and Riemann integrals, one has that
\begin{align*}
    \tilde{Y}^\epsilon_t 
    & = \epsilon^{\mu} \left \{  \int_0^{\epsilon t} \sigma (Y_u) f(V_u,u) \D X_u - \frac{1}{2}  \int_0^{\epsilon t} \sigma^2(Y_u) f^2(V_u,u) \D u   \right\}\\
    & =  \int_0^t \tilde{\sigma}^\epsilon(\tilde{Y}^\epsilon_u) \D (Z^{(1)})^\epsilon_u -\frac{1}{2} \int_0^t (\tilde{\sigma}^\epsilon)^2(\tilde{Y}^\epsilon_u)\D (Z^{(2)})^\epsilon_u,
\end{align*}
where
\begin{align*}
(Z^{(1)})^{\epsilon}_t & :=   \int_0^{ t}  f(V^\epsilon_u, \epsilon u) \D ( \epsilon^{\mu} X_{\epsilon u} ),\\
(Z^{(2)})^{\epsilon}_t & := \epsilon^{\mu +1}   \int_0^t f^2(V^{\epsilon}_u, \epsilon u) \D u,
\end{align*}
and 
\begin{equation*}
   \tilde{\sigma}^\epsilon(s) := \sigma(Y_0 + \epsilon^{-\mu}s).
\end{equation*}
Note that $Z^{(1)}$ is well-defined since $V^\epsilon$ and $X_{\epsilon \cdot}$ are $( \mathcal{F}^\epsilon_t) := (\mathcal{F}_{\epsilon t})$-adapted respectively.
Indeed, we can derive an LDP for $\{ \tilde{Y}^\epsilon \}$ under the following Hypothesis \ref{hypo}. 

\begin{assumption}\label{hypo}
We assume that $\kappa$ satisfies the following conditions:
\begin{enumerate}
\item $\alpha \in (0,1/2)$, and $\mu <0$, and $\sigma \in C^4_b$,
\item the Lipschitz part $g$ of $\kappa$ is in $C^2_b$, and  $\sup_{t \in [0,1] } |g(\epsilon t) -1| \to 0$ as $\epsilon \searrow 0$.
\item $f :\bbR \times [0,\infty) \to \bbR$ is a continuous function and satisfies the following conditions: for all $v^n ,v \in C([0,1],\bbR)$ with $v^n \to v$ in $C([0,1],\bbR)$, 
\[
\sup_{\epsilon \in (0,1)} \sup_{t \in [0,1]} |f(v_t,\epsilon  t) - f(v^n_t,\epsilon  t) | \to 0, \quad \text{as $n \to \infty$}.
\] 
\[
\sup_{t \in [0,1] } |f(v_t,\epsilon t) - f(v_t, 0) | \to 0 , \quad \text{as $\epsilon \searrow 0$.}
 \]
\end{enumerate}
\end{assumption}

Note that the assumption 2 is harmless in the sense that all examples which appear in Remark \ref{kernel} satisfy them. 
Note also that the assumption 3 is harmless in the sense that the all functions discussed in Remark \ref{co} satisfy this condition.
\begin{theorem}\label{short}
Assume that Hypothesis \ref{hypo}.  

Then $\{ \tilde{Y}^{\epsilon} \}_{0 < \epsilon \leqq 1}$ satisfies the LDP on $C^{\alpha\text{-Hld}} ([0,1] ,\bbR)$ as $\epsilon \searrow 0$ with speed $\epsilon^{-(2\mu +1)}$ with good rate function
 \begin{align*}
 \tilde{J} (\tilde{y}) 
:= \inf \left \{ \frac{1}{2} \| (w,w^\perp  ) \|^2_{\mathcal{H}} :
  \begin{aligned}
  & x = \rho  w + \sqrt{1 - \rho^2} w^\perp , \\
    & \tilde{y} = \sigma(y_0) \int_0^{\cdot} f( \Psi  \mathcal{K}_0 (a (A_0) w )_r ,0)  \D x_r , \  (w,w^\perp ) \in \mathcal{H}
      \end{aligned}  \right\},
\end{align*}
\end{theorem}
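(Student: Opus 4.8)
\emph{Proof sketch.}
The plan is to run the argument behind Theorem~\ref{main5} (equivalently Theorem~\ref{main4}) for the time-rescaled system, while keeping track of the two extra $\epsilon$-dependences that are special to the short-time regime: the effective coefficient $\tilde\sigma^{\epsilon}(s)=\sigma(Y_0+\epsilon^{-\mu}s)$ and the effective kernel operator $\mathcal{K}^{\epsilon}$. Set $\delta:=\epsilon^{2\mu+1}$; Hypothesis~\ref{hypo}(1) gives $\mu\in(-1/2,0)$, so $2\mu+1\in(0,1)$, $\delta\searrow0$ as $\epsilon\searrow0$, and Brownian scaling gives $\epsilon^{\mu}X_{\epsilon\cdot}\stackrel{d}{=}\delta^{1/2}X$, so $\delta^{-1}=\epsilon^{-(2\mu+1)}$ is exactly the speed claimed. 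As in the computation preceding Hypothesis~\ref{hypo}, $\tilde{Y}^{\epsilon}$ is the first level of the solution $\bar{Y}^{\epsilon}=\Phi^{\epsilon}(\mathbb{Z}^{\epsilon})$ of the RDE $\bar{Y}^{\epsilon}_t=\int_0^t\bar\sigma^{\epsilon}(\bar{Y}^{\epsilon}_u)\,\D\mathbb{Z}^{\epsilon}_u$, where $\mathbb{Z}^{\epsilon}=F(Z^{\epsilon})$ is the Young pairing of $Z^{\epsilon}=((Z^{(1)})^{\epsilon},(Z^{(2)})^{\epsilon})$ and $\bar\sigma^{\epsilon}$ is obtained from $\tilde\sigma^{\epsilon}$ by the It\^o--Stratonovich conversion; since $\sigma\in C^{4}_b$ and $\epsilon^{-\mu}\le1$, the field $\bar\sigma^{\epsilon}$ lies in $C^{3}_b$ with norm bounds uniform in $\epsilon\le1$, and well-posedness of this representation is checked as for \eqref{esrSABR} via Theorem~\ref{thm:33}. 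Fix auxiliary exponents $\alpha<\alpha''<\alpha'<1/2$ with $\alpha''>1/3$, the gaps serving to absorb the H\"older-regularity loss in Theorem~\ref{main1} and to keep the rough path in the admissible range of Theorem~\ref{thm:33}.

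\emph{Step 1: LDP for the driver $\mathbb{Z}^{\epsilon}$.} The rescaled Brownian motions satisfy $(\epsilon^{\mu}W_{\epsilon\cdot},\epsilon^{\mu}W^{\perp}_{\epsilon\cdot})\stackrel{d}{=}\delta^{1/2}(W,W^{\perp})$, hence obey Schilder's theorem on $C_0^{\alpha'\text{-Hld}}([0,1],\bbR^2)$ with speed $\delta^{-1}$ and rate $\tfrac12\|\cdot\|^{2}_{\mathcal{H}}$. Writing $A_{\epsilon\cdot}-A_0=\epsilon\int_0^{\cdot}b(A_{\epsilon s})\,\D s+\int_0^{\cdot}a(A_{\epsilon s})\,\D\bar{W}^{\epsilon}_s$ with $\bar{W}^{\epsilon}=W_{\epsilon\cdot}$ an $(\mathcal{F}^{\epsilon}_t)$-Brownian motion: the drift is deterministically $O(\epsilon^{\mu+1})=O(\epsilon^{1/2}\delta^{1/2})$, hence exponentially negligible at speed $\delta^{-1}$; and since $\mu<0$ forces $\delta\log\PROB{\sup_{[0,1]}|A_{\epsilon\cdot}-A_0|\ge\eta}\to-\infty$ for every $\eta>0$ (because $\delta/\epsilon=\epsilon^{2\mu}\to\infty$), the exponential martingale estimate of Proposition~\ref{main2} shows that $\epsilon^{\mu}(A_{\epsilon\cdot}-A_0)$ is exponentially equivalent to $a(A_0)\,\epsilon^{\mu}\bar{W}^{\epsilon}$. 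By Hypothesis~\ref{hypo}(2) one verifies $\mathcal{K}^{\epsilon}\to\mathcal{K}_0$ uniformly on bounded subsets of $C_0^{\alpha'\text{-Hld}}$; the contraction principle through the continuous maps $w\mapsto\Psi\mathcal{K}^{\epsilon}(a(A_0)w)$ and through $\Psi$ then yields the LDP for $(V^{\epsilon},\epsilon^{\mu}X_{\epsilon\cdot})$ on $C([0,1],\bbR)\times C_0^{\alpha'\text{-Hld}}([0,1],\bbR)$, the rate being carried by $(w,w^{\perp})\mapsto(\Psi\mathcal{K}_0(a(A_0)w),\,\rho w+\sqrt{1-\rho^{2}}w^{\perp})$. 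A further application of the contraction principle through $(v,x)\mapsto(f(v,\epsilon\cdot),x)$---legitimate because Hypothesis~\ref{hypo}(3) supplies both equicontinuity in $v$ uniform in $\epsilon$ and $f(v,\epsilon\cdot)\to f(v,0)$---gives the LDP for $(f(V^{\epsilon},\epsilon\cdot),\epsilon^{\mu}X_{\epsilon\cdot})$ on the same product space. Since $\epsilon^{\mu}X_{\epsilon\cdot}$ is $\alpha'$-Uniformly Exponentially Tight at speed $\delta^{-1}$ (Proposition~\ref{main2} and Brownian scaling), Theorem~\ref{main1} applied with exponents $\alpha'$ and $\alpha''$ yields the LDP for $(f(V^{\epsilon},\epsilon\cdot),\epsilon^{\mu}X_{\epsilon\cdot},(Z^{(1)})^{\epsilon})$ with the integral component in $C_0^{\alpha''\text{-Hld}}$; and since $(Z^{(2)})^{\epsilon}=\epsilon^{\mu+1}\int_0^{\cdot}f^{2}(V^{\epsilon}_u,\epsilon u)\,\D u$ has $C^{1\text{-Hld}}$-norm of order $\epsilon^{\mu+1}=\epsilon^{1/2}\delta^{1/2}$ times an exponentially tight factor, it is exponentially negligible at speed $\delta^{-1}$. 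Hence $Z^{\epsilon}$ satisfies the LDP on $C_0^{\alpha''\text{-Hld}}\times C_0^{\alpha''\text{-Hld}}$ (the rate forcing the second coordinate to vanish), and by continuity of the one-dimensional Young pairing $F$ (Section~9.4 of \cite{FV10}) so does $\mathbb{Z}^{\epsilon}$ on $G\Omega^{\alpha''\text{-Hld}}([0,1],\bbR^{2})$.

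\emph{Step 2: transfer to $\tilde{Y}^{\epsilon}$.} Since $\sigma\in C^{4}_b$ and $\epsilon^{-\mu}\searrow0$, the field $\bar\sigma^{\epsilon}$ converges on compacts in $C^{3}_b$ to the constant field $\bar\sigma_0\equiv(\sigma(Y_0),0)$; by local-Lipschitz dependence of the RDE solution map on both the driving rough path and the vector field (Theorem~\ref{thm:33}(i)), $\tilde{Y}^{\epsilon}$ is exponentially equivalent, at speed $\delta^{-1}$, to the first level of $\Phi^{0}(\mathbb{Z}^{\epsilon})$, where $\Phi^{0}$ is the solution map for $\bar\sigma_0$. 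The extended contraction principle / approximation lemma (\cite{DeZe}) then upgrades the LDP of $\mathbb{Z}^{\epsilon}$ to an LDP for $\tilde{Y}^{\epsilon}$ on $C^{\alpha''\text{-Hld}}([0,1],\bbR)$, hence on $C^{\alpha\text{-Hld}}([0,1],\bbR)$ via the continuous inclusion $C^{\alpha''\text{-Hld}}\hookrightarrow C^{\alpha\text{-Hld}}$, with some rate function $\tilde{J}$. Finally, because $\bar\sigma_0$ is constant the RDE collapses to the linear map $\mathbb{Z}\mapsto\sigma(Y_0)\cdot(\text{first component of }Z)$---no iterated integral of $Z^{(1)}$ is used and the $Z^{(2)}$-channel is inert---while on the rate-function domain the limiting $x=\rho w+\sqrt{1-\rho^{2}}w^{\perp}$ is of bounded variation, so the surviving first component is the Riemann--Stieltjes integral $\sigma(y_0)\int_0^{\cdot}f(\Psi\mathcal{K}_0(a(A_0)w)_r,0)\,\D x_r$. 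This identifies $\tilde{J}$ with the expression in the statement.

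\emph{Main obstacle.} The crux is Step~1, and two points carry essentially all the work. First, the operator convergence $\mathcal{K}^{\epsilon}\to\mathcal{K}_0$ in the relevant H\"older topologies, uniformly on balls: this rests on delicate estimates for the singular part $\epsilon^{-\mu}\tfrac{\D}{\D t}\kappa_{\epsilon}(t-s)$ (integrable against H\"older increments precisely because $\alpha'>|\mu|$), and is where Hypothesis~\ref{hypo}(2)---$g\in C^{2}_b$ and $\sup_{[0,1]}|g(\epsilon\cdot)-1|\to0$---is used. Second, the frozen-coefficient exponential equivalence $\epsilon^{\mu}(A_{\epsilon\cdot}-A_0)\approx a(A_0)\,\epsilon^{\mu}\bar{W}^{\epsilon}$ at speed $\delta^{-1}$, which combines the uniform-in-norm exponential martingale bounds underlying Proposition~\ref{main2} with the fact---crucial, and using $\mu<0$---that $\epsilon^{1/2}=o(\delta^{1/2})$, so $A$ is super-exponentially close to $A_0$ on $[0,\epsilon]$. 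The remaining $\epsilon$-dependences ($\tilde\sigma^{\epsilon}$, $f(\cdot,\epsilon\cdot)$, $(Z^{(2)})^{\epsilon}$) are comparatively soft, handled respectively by the $C^{4}_b$-bound on $\sigma$, by Hypothesis~\ref{hypo}(3), and by the scaling $\epsilon^{\mu+1}=\epsilon^{1/2}\delta^{1/2}$.
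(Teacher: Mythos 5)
Your proposal is correct in outline and reproduces the paper's overall strategy: identify the speed $\epsilon^{-(2\mu+1)}$ by Brownian scaling, build the LDP for the rescaled driver through Schilder's theorem, the kernel convergence $\mathcal{K}^{\epsilon}\to\mathcal{K}_0$ (Appendix \ref{proK}), Hypothesis \ref{hypo}(3) for $f$, the $\alpha$-UET property of Proposition \ref{main2}(ii) together with Theorem \ref{main1} and the Young pairing, and then transfer to $\tilde{Y}^{\epsilon}$ through the $\epsilon$-dependent RDE solution maps. The one genuinely different device is your treatment of the volatility driver: the paper writes $\tilde{A}^{\epsilon}=\epsilon^{\mu}(A_{\epsilon\cdot}-A_0)$ as the solution of an RDE with coefficients $(\tilde{a}^{\epsilon},-\tfrac{\epsilon^{2\mu+1}}{2}\tilde{a}^{\epsilon}(\tilde{a}^{\epsilon})'+\tilde{b}^{\epsilon})$, proves $\epsilon$-uniform $C^3_b$ bounds and equicontinuity of the solution maps $\tilde{\Phi}_{\epsilon}$, and invokes Puhalskii's extended contraction principle; you instead freeze the coefficient directly, proving exponential equivalence of $\tilde{A}^{\epsilon}$ with $a(A_0)\epsilon^{\mu}W_{\epsilon\cdot}$ from $\delta/\epsilon=\epsilon^{2\mu}\to\infty$ (here $\mu<0$ is essential) plus the localized martingale tail bound of Proposition \ref{main2} — this is more elementary and avoids lifting $A$ to a rough path, at the cost of having to run the localization/stopping argument in the H\"older norm, which you only sketch; similarly you replace the extended contraction principle by exponential equivalence plus the Dembo--Zeitouni approximation lemma at the final step, which is essentially the same content. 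One small inaccuracy to fix: after the It\^o--Stratonovich conversion the limiting vector field is $(\sigma(Y_0),-\tfrac12\sigma(Y_0)^2)$ rather than $(\sigma(Y_0),0)$, since $(\tilde{\sigma}^{\epsilon})'\to0$ but $(\tilde{\sigma}^{\epsilon})^2\to\sigma(Y_0)^2$; this is harmless for the rate function because the second driver component $(Z^{(2)})^{\epsilon}$ is exponentially negligible (which you establish independently), but the justification "converges in $C^3_b$ on compacts to the constant field $(\sigma(Y_0),0)$" should be amended accordingly.
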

The proof of Theorem \ref{short}  is given in Section \ref{shortproof}.

An LDP for the marginal distribution $\tilde{Y}^\epsilon_1$ follows from the contraction principle, and the corresponding one-dimensional rate function as follows.
\begin{theorem}\label{shorttime}
Assume Hypothesis \ref{hypo}.
Then  $\{ t^{\mu} (Y_t -Y_0)\}_{ 1\ge t>0}$ satisfies the LDP on $\bbR$ with $t \searrow 0$ with speed $t^{- (2\mu +1)}$ with good rate function
\begin{align}\label{ratef}
\tilde{J}^{\#} (z) 
:= \inf_{g \in L^2([0,1])} 
\left [ \frac{1}{2} \int_0^1 |g_r|^2 \D r + \frac{ \left \{z - \rho \sigma(Y_0) \int_0^1 f \left ( v(g)_r,0 \right) g_r \D r \right\}^2 }{2 (1 - \rho^2) \sigma(Y_0)^2 \int_0^1 f\left(v(g)_r ,0 \right)^2 \D r } \right],
\end{align}
where $ v(g)  = a (A_0) \Psi ( \mathcal{K}_0 g)$, and $\mathcal{K}_0 g := \int_0^{\cdot} \kappa_H(t-r) g_r \D r$.
\end{theorem}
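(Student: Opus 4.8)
The plan is to derive Theorem \ref{shorttime} from Theorem \ref{short} via the contraction principle applied to the evaluation map $\mathrm{ev}_1 : C^{\alpha\text{-Hld}}([0,1],\bbR) \to \bbR$, $\tilde{y} \mapsto \tilde{y}_1$. First I would observe that $\mathrm{ev}_1$ is continuous (indeed $1$-Lipschitz) on $C^{\alpha\text{-Hld}}([0,1],\bbR)$, and that $t^{\mu}(Y_t - Y_0) = \tilde{Y}^{\epsilon}_1$ with $\epsilon = t$ by the very definition of $\tilde{Y}^{\epsilon}$ in Section \ref{finance}. Hence by Theorem \ref{short} and the contraction principle (e.g. Theorem 4.2.1 in \cite{DeZe}), $\{t^{\mu}(Y_t-Y_0)\}_{1\ge t>0}$ satisfies the LDP on $\bbR$ with speed $t^{-(2\mu+1)}$ and good rate function
\[
\tilde{J}^{\#}(z) = \inf\left\{ \tilde{J}(\tilde{y}) : \tilde{y}\in C^{\alpha\text{-Hld}}([0,1],\bbR),\ \tilde{y}_1 = z \right\}.
\]
The remaining work is to rewrite this infimum in the explicit two-layer form \eqref{ratef}.

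Next I would unfold $\tilde{J}(\tilde{y})$ from Theorem \ref{short}: the admissible $\tilde{y}$ are exactly those of the form $\tilde{y} = \sigma(y_0)\int_0^{\cdot} f(\Psi\mathcal{K}_0(a(A_0)w)_r,0)\,\D x_r$ with $x = \rho w + \sqrt{1-\rho^2}w^{\perp}$ and $(w,w^{\perp})\in\mathcal{H}$, with cost $\tfrac12\|(w,w^{\perp})\|_{\mathcal{H}}^2$. Substituting and using that $\mathcal{H}$ on $\bbR^2$ splits as the orthogonal direct sum of two copies of $L^2([0,1])$ via $w_r = \int_0^r \dot w_s\,\D s$, the constraint $\tilde{y}_1 = z$ becomes
\[
z = \sigma(Y_0)\int_0^1 f\bigl(v(\dot w)_r,0\bigr)\bigl(\rho\,\dot w_r + \sqrt{1-\rho^2}\,\dot w^{\perp}_r\bigr)\,\D r,
\]
where $v(\dot w) = a(A_0)\Psi(\mathcal{K}_0\dot w)$ depends only on the first component $\dot w$. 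So the double infimum over $(\dot w,\dot w^{\perp})\in L^2\times L^2$ can be carried out in two stages: fix $g:=\dot w\in L^2([0,1])$, then minimise $\tfrac12\int_0^1|\dot w^{\perp}_r|^2\,\D r$ over all $\dot w^{\perp}$ satisfying the now-affine (in $\dot w^{\perp}$) linear constraint. This is a standard quadratic-minimisation-under-one-linear-constraint problem: the minimiser is $\dot w^{\perp}$ proportional to $r\mapsto f(v(g)_r,0)$, and the minimum value is
\[
\frac{\bigl\{z - \rho\sigma(Y_0)\int_0^1 f(v(g)_r,0)\,g_r\,\D r\bigr\}^2}{2(1-\rho^2)\sigma(Y_0)^2\int_0^1 f(v(g)_r,0)^2\,\D r},
\]
obtained by Cauchy--Schwarz / Lagrange multipliers. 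Adding back $\tfrac12\int_0^1|g_r|^2\,\D r$ and taking the infimum over $g\in L^2([0,1])$ yields exactly \eqref{ratef}.

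Two points require a little care. First, the degenerate cases: if $\int_0^1 f(v(g)_r,0)^2\,\D r = 0$ then the inner constraint is only solvable when $z = \rho\sigma(Y_0)\int_0^1 f(v(g)_r,0)g_r\,\D r = 0$ (since $f\ge 0$ this forces $f(v(g)_\cdot,0)\equiv 0$ a.e., hence the numerator vanishes too), and if $\rho^2 = 1$ there is no $w^{\perp}$ degree of freedom — these are handled by the usual convention that the fraction is $0$ when numerator and denominator both vanish and $+\infty$ when only the denominator does, consistent with the infimum-over-empty-set $= +\infty$ convention in Theorem \ref{short}. Second, one must check that every $\tilde{y}$ in the admissible class of Theorem \ref{short} does lie in $C^{\alpha\text{-Hld}}([0,1],\bbR)$ and that $\mathrm{ev}_1$ applied to it gives precisely the stated linear functional of $(\dot w,\dot w^{\perp})$ — but this is immediate since the Young/Wiener integral $\int_0^1 f(\cdots)_r\,\D x_r$ evaluated at the endpoint is literally that integral. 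The main obstacle is none of the LDP machinery (the contraction principle does all of it) but the bookkeeping of the inner constrained quadratic minimisation and the careful treatment of the degenerate boundary cases $\rho^2=1$ and $f(v(g)_\cdot,0)\equiv 0$; everything else is routine.
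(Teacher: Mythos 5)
Your proposal is correct and follows essentially the same route as the paper: apply the contraction principle (evaluation at time $1$) to the pathwise LDP of Theorem \ref{short}, and then rewrite the resulting variational formula over $(w,w^\perp)\in\mathcal{H}$ in the explicit form \eqref{ratef}. The only difference is that you carry out the inner constrained quadratic minimisation over $w^\perp$ explicitly (via Cauchy--Schwarz, with the degenerate cases noted), whereas the paper delegates exactly this rewriting step to the argument of Theorem 3.8 in \cite{FuTa}.
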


\begin{proof}
By the contraction principle and the previous theorem, $\{ t^{\mu} (Y_t -Y_0)\}_{ 1\ge t>0}$ satisfies the LDP on $\bbR$ with $t \searrow 0$ with speed $t^{-(2\mu +1)}$ with good rate function
 \begin{align*}
 \tilde{J}^{\#} ( z) 
:= \inf \left \{ \frac{1}{2} \| (w ,w^\perp ) \|_{\mathcal{H}}^2 :
  \begin{aligned}
  & x = \rho  w + \sqrt{1 - \rho^2} w^\perp ,\\
    & z = \sigma(y_0) \int_0^{1} f(\Psi \mathcal{K}_0 (a (A_0) w )_t,0) \D x_r , \  (w,w^\perp ) \in \mathcal{H} 
      \end{aligned}  \right\}.
\end{align*} 
By using the argument in Theorem 3.8 in \cite{FuTa}, one can prove that $ \tilde{J}^{\#} $ has the above representation \eqref{ratef}. 
\end{proof}

\begin{remark}[comparison with previous studies : short-time LDP]\label{comp}
The theorem is a natural extension for the results in \cite{FoZh,JaPaSt, BaFrGaMaSt,FuTa} because if $\Psi = \text{id}$, $a =1$, and $b=0$, then the statement (in particular, the rate function \eqref{ratef}) corresponds to that appeared in the previous works.
First we will compare the assumption for the parameter of  models (see Table \ref{table1}). 
In view of the local volatility function $\sigma$, our method outperforms \cite{FoZh,JaPaSt,BaFrGaMaSt}, although we have to assume a slightly stronger smoothness than \cite{FuTa} because we transform \eqref{rSABR} into stratonovich SDEs.
On the other hand, our assumption of the stochastic volatility function $f$ is the most general in the sense of Remark \ref{co}. 
Also, our method is more flexible than the others in the sense of the fractional operator $\mathcal{K}$, $A$, and $\Psi$. 
Our method allows us to add a Lipschitz part of $\kappa$, a diffusion process $A$, and some transformation $\Psi$ of volatility processes.

As mentioned Remark \ref{comp1}, our first contribution is that the proof is much simpler than that of previous works, because our method only uses the standard rough path theory, while the method of \cite{BaFrGaMaSt, FuTa} need to use regularity structures or a partial rough path theory which are further developments of it (\cite{FoZh,JaPaSt}  is somewhat less applicable).
Moreover, our method allows for a unified treatment of short-time LDP for rough volatility models. 
For example, the result obtained by \cite{FuTa} does not contain the Forde \& Zhang'  result \cite{FoZh} since \Hd \ continuous function is not smooth in general. 

Furthermore, one can derives the short-time LDP for generalized rough Heston models discussed in \cite{HoJaMu}:
\begin{align}\label{Heston}
    \D Y_t & = - \frac{1}{2} V_t\D t + \sqrt{V_t} \D X_t,\quad  Y_0=0,\\
    V_t &= \tilde{\Psi} (\mathcal{K} A)_t \notag,
\end{align}
where $x \mapsto \tilde{\Psi} (x)$ is a continuous map from $C^{\gamma \text{-Hld}} ([0,1])$ into $C_+^{\gamma \text{-Hld}} ([0,1]) $.
The equation \eqref{Heston} coincides with the equation \eqref{rSABR} with $\sigma=1$ and $f (v,t) :=\sqrt{v}$. 

\eqref{Heston} are widely applicable, in the sense that the authors of \cite{HoJaMu} provide us how to make a numerical approximation of the solution of \eqref{Heston}.
Although a reason for using and studying such models is that it is expected to be consistent with the power laws of implied volatility observed in the market, there is no justifications of this expectation in the literature because $f$ is not smooth and $\mathcal{K}$, $A$, or $\Psi$ are general.  
One can remedy this problem in the sense that the approximation formula \eqref{concl} which is given later and consistent to the power law of the implied volatility in the market is obtained.
\end{remark}

\begin{table}
\begin{center}
\caption{Short time asymptotics for rough volatility models \eqref{rSABR}}\label{table1}
\begin{tabular}{cccccc} \hline
   method & $\sigma$ & $f$ & $\mathcal{K}$ & $A$ & $\Psi$ \\ \hline
   Forde  \& Zhang \cite{FoZh}& 1 & \Hd \ continuous  & $\mathcal{K}_0$ & $a=1$, $b=0$ & id  \\
   Jacquier et al. \cite{JaPaSt} & 1 &  $\sqrt{\exp{(x - t^{2H} /2 ) }}$  &  $\mathcal{K}_0$ & $a=1,b=0$ & Id\\ 
   Bayer et al. \cite{BaFrGaMaSt}& 1 & $C^\infty$ & $\mathcal{K}_0$ & $a=1$, $b=0$ & id  \\ 
   Fukasawa  \& T \cite{FuTa} & $C^3_b$ & $C^\infty$ & $\mathcal{K}_0$ & $a=1$, $b=0$ & id  \\ \hline
 Our method & $C^4_b$ & general (see Remark \ref{co})  & general & general & general  \\ \hline
\end{tabular}
\end{center}
\end{table}

\begin{remark}[flexibility of the parameters of \eqref{rSABR}]\label{depend}
Although we adopt the generalized fractional operator $\mathcal{K}$, it is $\mathcal{K}_0$ that appears in the rate function \eqref{ratef} for the short time asymptotics of \eqref{rSABR} and so the effect of $\mathcal{K}$ is partial in this sense. In other words, the generality of Lipschitz part $g$ of $\kappa$ does not affect it.
On the other hand, $\Psi$ and $a(A_0)$ truly affects the rate function for the short time asymptotics of \eqref{rSABR}.
Also the scaling order for $Y$ does depends on the parameter $\mu$, and these suggests that $\mu$ does affect the implied volatility skew.
\end{remark}

\subsection{Short time asymptotics for put/call options and implied volatility for rough volatility models \eqref{rSABR}}

Let 
\begin{align*}
\Lambda^\ast(x) := 
\begin{cases}
\inf_{y>x}  \tilde{J}^{\#}  (y), & x \ge 0,\\
\inf_{y<x}  \tilde{J}^{\#}  (y), & x \le 0.
\end{cases}
\end{align*}
where $ \tilde{J}^{\#} $ is defined in Theorem \ref{shorttime}. Let $(x)_{+} := x \vee 0$. 

\begin{theorem}\label{putcall}
Under the Hypothesis \ref{hypo}, we have the following:
\begin{enumerate}
\item[(i)] we have the following small-time behavior for out of the money put option on $S_t = \exp{(Y_t)}$ with $S_0 =1$:
\begin{align*}
 - \lim_{t \searrow 0} t^{2(\mu +1/2)} \log \EXP{(\exp{(xt^{-\mu} )} -S_t)_{+} } = \Lambda^\ast(x), \quad x \le 0, 
\end{align*}
where $x := \log K$ is the log moneyness.
\item[(ii)] Moreover, if we assume that
\begin{align}\label{moment}
\limsup_{t\searrow 0} t^{2\mu+1} \log \EXP{S^q_t} =0,\quad q >1,
\end{align} 
then we also have the following small-time behavior for out of the money call option on $S_t = \exp{(Y_t)}$ with $S_0 =1$:
\begin{align*}
 - \lim_{t \searrow 0} t^{2(\mu +1/2)} \log \EXP{(S_t - \exp{(xt^{-\mu} )}  )_{+} } = \Lambda^\ast(x), \quad x \ge 0. 
\end{align*}
\end{enumerate}
\end{theorem}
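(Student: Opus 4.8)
The plan is to deduce both asymptotics from the one–dimensional LDP of Theorem~\ref{shorttime} via the classical Forde--Zhang argument \cite{FoZh}. Since $S_0=1$ forces $Y_0=0$, write $\Xi^t:=t^\mu(Y_t-Y_0)=t^\mu Y_t$ and $\lambda=\lambda(t):=t^{-\mu}$, so that $S_t=e^{\lambda\Xi^t}$, the strike equals $\exp(xt^{-\mu})=e^{\lambda x}$, $\{S_t\le e^{\lambda x}\}=\{\Xi^t\le x\}$, and by Theorem~\ref{shorttime} the family $\{\Xi^t\}_{0<t\le1}$ satisfies the LDP on $\bbR$ with speed $t^{-(2\mu+1)}$ and good rate function $\tilde J^{\#}$ as $t\searrow0$; note $2(\mu+1/2)=2\mu+1$, so the target speed matches. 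Two elementary observations are used repeatedly: (a) since $2\mu+1\in(0,1)$ and $\mu+1>0$, any prefactor polynomial in $\lambda$ and in $\log t$ is $e^{o(t^{-(2\mu+1)})}$, hence negligible on the relevant exponential scale; (b) $\tilde J^{\#}$ is continuous on $\bbR$ — it is lower semicontinuous as a good rate function, and continuity follows from the representation \eqref{ratef} (cf.~\cite{FoZh}) — so $\inf_{y\le x}\tilde J^{\#}(y)=\inf_{y<x}\tilde J^{\#}(y)$ and $\inf_{y\ge x}\tilde J^{\#}(y)=\inf_{y>x}\tilde J^{\#}(y)$, i.e.\ the one–sided infima defining $\Lambda^\ast$ may be computed over open or closed half–lines interchangeably.

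For $(i)$, fix $x\le0$. For the lower bound, restricting $\EXP{(e^{\lambda x}-S_t)_+}$ to $\{\Xi^t\le x-\delta\}$ gives $\EXP{(e^{\lambda x}-S_t)_+}\ge e^{\lambda x}(1-e^{-\lambda\delta})\,\PROB{\Xi^t\in(-\infty,x-\delta)}$; applying $t^{2\mu+1}\log$, using (a) on the prefactor and the LDP lower bound for the open set $(-\infty,x-\delta)$, then letting $\delta\searrow0$ (so $\inf_{y<x-\delta}\tilde J^{\#}(y)\downarrow\inf_{y<x}\tilde J^{\#}(y)=\Lambda^\ast(x)$), yields $\liminf_{t\searrow0}t^{2\mu+1}\log\EXP{(e^{\lambda x}-S_t)_+}\ge-\Lambda^\ast(x)$. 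For the upper bound, $\EXP{(e^{\lambda x}-S_t)_+}\le e^{\lambda x}\PROB{\Xi^t\le x}$, and $t^{2\mu+1}\log$ together with (a), the LDP upper bound for the closed set $(-\infty,x]$, and (b) gives $\limsup_{t\searrow0}t^{2\mu+1}\log\EXP{(e^{\lambda x}-S_t)_+}\le-\Lambda^\ast(x)$. This proves $(i)$.

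For $(ii)$, fix $x\ge0$. The lower bound for $\EXP{(S_t-e^{\lambda x})_+}$ is identical to the put case, restricting to $\{\Xi^t\ge x+\delta\}$ and using the open set $(x+\delta,\infty)$: $\liminf_{t\searrow0}t^{2\mu+1}\log\EXP{(S_t-e^{\lambda x})_+}\ge-\Lambda^\ast(x)$. For the upper bound the payoff is unbounded, which is where \eqref{moment} enters. Fix $N>0$ and split at the level $e^{\lambda N}$:
\[
\EXP{(S_t-e^{\lambda x})_+}\ \le\ e^{\lambda N}\,\PROB{\Xi^t\ge x}\ +\ \EXP{S_t\,\mathbf{1}_{\{S_t>e^{\lambda N}\}}}.
\]
For the first summand, (a) and the LDP upper bound for $[x,\infty)$ with (b) give a $\limsup$ of at most $-\inf_{y>x}\tilde J^{\#}(y)=-\Lambda^\ast(x)$. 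For the second, Hölder's inequality with exponents $q$ and $q/(q-1)$ gives $\EXP{S_t\,\mathbf{1}_{\{S_t>e^{\lambda N}\}}}\le\EXP{S_t^q}^{1/q}\,\PROB{\Xi^t>N}^{(q-1)/q}$, so by \eqref{moment} (which forces $\limsup_{t\searrow0}t^{2\mu+1}\log\EXP{S_t^q}=0$) and the LDP upper bound for $[N,\infty)$,
\[
\limsup_{t\searrow0}t^{2\mu+1}\log\EXP{S_t\,\mathbf{1}_{\{S_t>e^{\lambda N}\}}}\ \le\ -\tfrac{q-1}{q}\inf_{y\ge N}\tilde J^{\#}(y).
\]
Since $\tilde J^{\#}$ has compact sublevel sets, $\inf_{y\ge N}\tilde J^{\#}(y)\to\infty$ as $N\to\infty$; choosing $N$ with $\frac{q-1}{q}\inf_{y\ge N}\tilde J^{\#}(y)\ge\Lambda^\ast(x)$ makes the second summand negligible compared with $e^{-t^{-(2\mu+1)}\Lambda^\ast(x)}$. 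Since $\log(a+b)\le\log2+\max(\log a,\log b)$, combining the two summands gives $\limsup_{t\searrow0}t^{2\mu+1}\log\EXP{(S_t-e^{\lambda x})_+}\le-\Lambda^\ast(x)$, and with the lower bound this proves $(ii)$.

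The main obstacle is the upper bound in $(ii)$: since the call payoff is unbounded one must control the contribution of $\{S_t\text{ large}\}$, which is precisely the role of the moment hypothesis \eqref{moment} — the point being that the \emph{goodness} of $\tilde J^{\#}$ (compactness of its sublevel sets) allows that contribution to be pushed below the target rate $\Lambda^\ast(x)$ by enlarging the truncation level $N$. The only other nonroutine ingredient is the continuity of $\tilde J^{\#}$, needed to identify the infima over closed half–lines produced by the LDP upper bound with the one–sided strict infima in the definition of $\Lambda^\ast$; the rest is a direct consequence of Theorem~\ref{shorttime}.
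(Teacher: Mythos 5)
Your proof is correct and rests on the same overall strategy as the paper (the one-dimensional LDP of Theorem \ref{shorttime}, a strike-shifted lower bound, and a moment-controlled upper bound), but the upper bounds are organized differently. For the put you bound the payoff by the strike times the in-the-money indicator, while the paper applies H\"older's inequality with exponent $q$ (and implicitly removes the resulting $(1-1/q)$ loss); your version is simpler. For the call the paper H\"olderizes the whole payoff, bounding $\mathsf{E}[(S_t-e^{xt^{-\mu}})_+^q]$ by $\mathsf{E}[S_t^q]$ via \eqref{moment}, whereas you truncate at level $\exp(Nt^{-\mu})$, keep the full rate $\Lambda^\ast(x)$ on the bounded part, apply H\"older only to the tail, and then use the \emph{goodness} of $\tilde{J}^{\#}$ to push the tail contribution below $\Lambda^\ast(x)$ by enlarging $N$; this works at a fixed $q>1$ and makes explicit exactly where \eqref{moment} and compact sublevel sets enter. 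Your lower bounds shift the strike additively ($x\mp\delta$) and use monotone unions of open sets, where the paper shifts multiplicatively ($x(1+\delta)$) and invokes continuity of $\Lambda^\ast$; your route avoids that continuity claim in the lower bound. Three small remarks: your observation (a) is stated for prefactors polynomial in $\lambda=t^{-\mu}$, but you apply it to prefactors of the form $e^{c\lambda}$ --- harmless, since $t^{2\mu+1}\lambda=t^{\mu+1}\to 0$, which is precisely the condition $\mu+1>0$ you record, and it is the same estimate as \eqref{upper} in the paper; the identification $\inf_{y\le x}\tilde{J}^{\#}=\inf_{y<x}\tilde{J}^{\#}$ (and its mirror for $x\ge0$) rests on continuity of $\tilde{J}^{\#}$, which you assert from \eqref{ratef} following \cite{FoZh} --- the paper uses the same fact tacitly, so you are at least as careful; and in the degenerate case $\Lambda^\ast(x)=\infty$ your fixed choice of $N$ should simply be replaced by letting $N\to\infty$.
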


\begin{proof}
See Appendix \ref{B}
\end{proof}

\begin{remark}
Because it is difficult to check the exponentially integrability of $S$ in general, the assumption \eqref{moment} is natural, see  Assumption 2.4 in \cite{BaFrGuHoSt}, for example. 
\end{remark}

\begin{corollary}\label{IVS}
Denote $\Sigma (x,t)$ by the implied volatility at the log moneyness $x$ and the maturity $t$.  
Then for the rough volatility models \eqref{rSABR}, we have 
\begin{align}\label{concl}
\lim_{t \searrow 0} \Sigma( x t^{-\mu},t ) = \frac{|x|}{\sqrt{2 \Lambda^\ast (x)}}, \quad x <0.
\end{align}
Moreover, if we assume that \eqref{moment}, then 
\begin{align*}
\lim_{t \searrow 0} \Sigma( x t^{-\mu},t ) = \frac{|x|}{\sqrt{2 \Lambda^\ast (x)}}, \quad x >0.
\end{align*}
\end{corollary}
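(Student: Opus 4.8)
The plan is to derive Corollary \ref{IVS} from Theorem \ref{putcall} by the now-standard translation between the large-deviation rate of out-of-the-money option prices and the small-time limit of implied volatility. First I would recall the Black--Scholes price $C_{BS}(x,\sigma^2 t)$ of a call (or put) with log-moneyness $x$, total variance $\sigma^2 t$; since we are looking at the rescaled strike $x t^{-\mu}$ with $\mu<0$, the relevant moneyness grows like $t^{-\mu}\to\infty$, so I would use the classical asymptotics of the Black--Scholes formula in the large-strike/small-maturity regime, namely that for fixed $x$ and a slowly varying volatility level,
\[
-\,t^{2(\mu+1/2)}\log C_{BS}\!\bigl(x t^{-\mu},\,\Sigma^2 t\bigr)\;\longrightarrow\;\frac{x^2}{2\,\Sigma_0^2}
\]
whenever $\Sigma(x t^{-\mu},t)\to\Sigma_0$ as $t\searrow0$. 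This is exactly the kind of computation carried out in Forde--Zhang \cite{FoZh} and in \cite{BaFrGuHoSt}; I would cite it rather than reprove it.

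The key steps, in order, are: (1) fix $x<0$ and write the true out-of-the-money put price as $\mathsf{E}[(\exp(xt^{-\mu})-S_t)_+]$; by definition of implied volatility this equals $P_{BS}(xt^{-\mu},\Sigma(xt^{-\mu},t)^2 t)$. (2) Apply Theorem \ref{putcall}(i) to get $-t^{2(\mu+1/2)}\log \mathsf{E}[(\exp(xt^{-\mu})-S_t)_+]\to\Lambda^\ast(x)$. (3) Combine with the Black--Scholes asymptotics from the previous paragraph: matching exponential rates forces $\Sigma(xt^{-\mu},t)^2\to x^2/(2\Lambda^\ast(x))$, i.e. $\Sigma(xt^{-\mu},t)\to |x|/\sqrt{2\Lambda^\ast(x)}$. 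A small technical point I would address here is that one must first know $\Sigma(xt^{-\mu},t)$ stays bounded away from $0$ and $\infty$ along $t\searrow0$ (so that the Black--Scholes asymptotic regime applies); this follows from the fact that $\Lambda^\ast(x)\in(0,\infty)$ for $x\neq0$, which in turn comes from the structure of $\tilde J^\#$ in \eqref{ratef} and the non-degeneracy $\rho^2<1$, $f>0$ — or, more cleanly, by a squeeze argument bounding the put price above and below by Black--Scholes prices at constant volatilities. (4) For $x>0$ repeat the argument verbatim with the call price, now invoking Theorem \ref{putcall}(ii), which is available precisely because the moment hypothesis \eqref{moment} is assumed in that half of the statement; put--call symmetry of the Black--Scholes rate function (both give $x^2/2\Sigma_0^2$) makes the two cases identical.

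The main obstacle is step (3): one needs the passage from "the log-price of the option has exponential decay rate $\Lambda^\ast(x)$" to "the implied volatility converges", which is not a purely formal inversion because the Black--Scholes functional is only asymptotically of Laplace type and the volatility argument is itself moving. The rigorous way to handle it is the standard two-sided bound: for any $\Sigma_0$ one shows that if $\Sigma(xt^{-\mu},t)$ had a subsequential limit $\Sigma_1\neq x^2/(2\Lambda^\ast(x))^{1/2}$, then the Black--Scholes rate along that subsequence would be $x^2/(2\Sigma_1^2)\neq\Lambda^\ast(x)$, contradicting step (2). I would also note that the regime $\mu<0$ (guaranteed by Hypothesis \ref{hypo}, item 1) is what makes $xt^{-\mu}$ the correct scaling at which the implied-volatility skew is captured; without it the statement would need to be re-centered. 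Everything else is bookkeeping, and the proof should occupy roughly half a page with the Forde--Zhang / BBF-type Black--Scholes lemma quoted as a black box.
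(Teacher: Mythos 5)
Your proposal is correct and follows essentially the same route as the paper, whose proof simply invokes the argument of Corollary 4.13 in \cite{FoZh}: convert the option-price rates of Theorem \ref{putcall} into the implied-volatility limit by matching against the out-of-the-money Black--Scholes asymptotics $-\log C_{BS}(k,v)\sim k^2/(2v)$, with the rate-function bounds handling the moving volatility argument. One small correction: since $\mu<0$ the log-moneyness $xt^{-\mu}$ actually tends to $0$ rather than $\infty$; what makes the Laplace-type Black--Scholes regime applicable is that $k^2/v = x^2 t^{-(2\mu+1)}/\Sigma^2\to\infty$ because $2\mu+1\in(0,1)$, and with that observation your matching argument goes through unchanged.
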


\begin{proof}
One can adapt the same  argument as Corollary 4.13 in \cite{FoZh}.
\end{proof}

\begin{remark}[observation and future works for \eqref{concl}]
The corollary is an extension for Corollary 4.15 in \cite{FoZh}, and our result outperforms the previous results in the sense that $\sigma$, $f$, $\mathcal{K}$ and $A$ is more general.
The dependence of these parameters is determined by how the rate function $\tilde{J}^\#$  depends on them (see Remark \ref{depend}).
Note that if $-\mu$ is negative, then the steepness of the implied volatility smile is infinite V-shape as $t \searrow 0$, while it is flat when $-\mu$ is positive.  

Although justifications are left in the future, I think there are several chances to apply the asymptotic formula \eqref{concl} for practical applications.
For one thing, the approximate formula \eqref{concl} has a slightly different structure compared to previous studies \cite{FoZh} because of the generality of \eqref{rSABR}.
As a result, the right-hand side of \eqref{concl} may be explicitly solved. 
Even if it cannot be explicitly solved, further precise approximation formula may be found here.
Furthermore, \eqref{concl} is more flexible than that of \cite{FoZh} in terms of the parameters of \eqref{rSABR}. 
This flexibility may have an impact on approximation accuracy when compared with the previous work \cite{FoZh}.
Moreover, the formula may suggest a reasonable and concrete model which is consistent to the power law of the implied volatility.
Finally, by using a numerical implementation method suggested by \cite{FoZh}, one can apply \eqref{concl} to the pricing of put/call options with being consistent to the power law of the implied volatility.
\end{remark}

\section{Proof of main theorems}

\subsection{Proof of Theorem \ref{main1} and \ref{main2}}\label{sec31}

 We fix $\alpha \in (0,1]$ and $\beta < \alpha$. 
 For simplicity, we will write $C_0^{\alpha \text{-Hld}}([0,1],\bbR)$ as $C_0^{\alpha \text{-Hld}}([0,1])$, and $\{ X^{\epsilon} \}_{\epsilon > 0}$ as $\{ X^{\epsilon} \}_{\epsilon}$. 
 
\begin{definition}\label{defG}
For $\delta >0$, we define the map $G_{\delta} : C([0,1]) \times  C_0^{\alpha \text{-Hld}}([0,1])  \to C_0^{\beta \text{-Hld}}([0,1]) $ as follows:
\begin{align*}
G_{\delta} (a,x)_t 
&:=  \sum_{k=1}^{\infty} a_{\tau_{k-1}^{\delta}} \left (  x_{ t \wedge \tau_k^{\delta}}  - x_{ t \wedge \tau_{k-1}^{\delta}}  \right), \quad t \in[0,1],
\end{align*}
where $\tau_0^{\delta} =0$ and
\[
\tau_k^{\delta} = \tau_k^{\delta} (a) := \inf \{t>\tau_{k-1}^{\delta} : |a_t - a_{\tau_{k-1}^{\delta}} | > \delta \} \wedge 1, \quad k \in \mathbb{N}.
\]
\end{definition}
 
\begin{remark}\label{3.4}
   We fix $a\in C([0,1])$. 
   By the definition of $\{\tau_k^{\delta}(a)\}_k$, for all $k \in \bbN$, $\tau_k^{\delta}(a) < \tau_{k+1}^{\delta}(a)$ or $\tau_k^{\delta}(a)=1$. 
   Moreover, there exists $k\in \mathbb{N}$ such that $\tau_k^{\delta}(a) =1$.
\end{remark}

\begin{remark}\label{welldef}
We fix $a \in C([0,1])$ and $\delta>0$. Let $k'_0$ is the smallest integer such that $\tau_{k'_0}^{\delta}(a) =1$. 
For all $0 \le s < t \le 1$, take the smallest number $l,l' \in \mathbb{N}$ such that $s \in [\tau^{\delta}_l, \tau^{\delta}_{l+1} ]$ and $t\in [\tau^{\delta}_{l'}, \tau^{\delta}_{l'+1}]$ ($l \le l'\le k'_0-1$). 
Because
\begin{align*} 
 \left | G_{\delta} (a,x)_t - G_{\delta} (a,x)_s \right| & \le  \|a\|_{\infty} \left \{  \sum_{k=l+2}^{l'} | x_{\tau^{\delta}_k} - x_{ \tau^{\delta}_{k-1}} | 
+  | x_t - x_{\tau^{\delta}_{l'}} | + | x_{\tau^{\delta}_{l+1}} - x_{s} |  \right \}\\
& \le \|a\|_{\infty} \|x\|_{\alpha\text{-Hld}} k'_0  |t-s|^{\alpha},
\end{align*}
 we conclude that $G_{\delta} (a,x)$ belongs to $C^{\alpha \text{-Hld}}([0,1])$. 
 Since $\beta < \alpha$, we have that $G_{\delta} (a,x) \in C_0^{\beta \text{-Hld}}([0,1])$.
\end{remark}

\begin{lemma}\label{num}
We fix $\delta>0$. For $a(n), a \in C([0,1])$, assume that $a(n) \to a$ in $C([0,1])$. 
Then there exists a subsequence $\{a(n')\}_{n'}$ of $\{a(n)\}_n$ and a sequence $\{r^{\delta}_k\}_k$ on $[0,1]$ such that the following properties hold;
\begin{enumerate}
\item[(i)] for all $k \ge1$, $\tau^{\delta}_k(a(n')) \to r^{\delta}_k$ as $n' \to \infty$, and \ $\{r^{\delta}_k\}_k$ is non-decreasing,
\item[(ii)] for all $k\ge 1$, either $r^{\delta}_k < \tau^{\delta}_k(a(n'))$ for all $n'\ge k$, or $r^{\delta}_k \ge \tau^{\delta}_k(a(n'))$ for all $n' \ge k$,
\item[(iii)] for all $k\ge1$, $a(n')_{\tau^{\delta}_k(a(n'))} \to a_{r^{\delta}_k}$ as $n' \to \infty$,
\item[(iv)]  for all $k\ge1$,  $r^{\delta}_k =1$ or  $r^{\delta}_{k-1} < r^{\delta}_k$,
\item[(v)] there exists $k' \in \mathbb{N}$ such that $r^{\delta}_{k'}=1$.
\end{enumerate}
\end{lemma}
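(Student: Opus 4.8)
The plan is to build $\{a(n')\}_{n'}$ and $\{r_k^\delta\}_k$ by a single diagonal extraction, after which (i)--(iii) follow from compactness of $[0,1]$ and the uniform convergence $a(n)\to a$, while (iv)--(v) are two short arguments by contradiction based on continuity of the paths involved; I expect (v) to be the genuine obstacle.

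\emph{Extraction and (i)--(iii).} Each $\tau_k^\delta(a(n))$ lies in the compact interval $[0,1]$, so inductively, starting from an infinite index set $N_{k-1}$, I first pass to an infinite $N_k'\subseteq N_{k-1}$ along which $\tau_k^\delta(a(n))\to r_k^\delta$ for some $r_k^\delta\in[0,1]$, and then to an infinite $N_k\subseteq N_k'$ on which either $\tau_k^\delta(a(n))>r_k^\delta$ holds throughout or $\tau_k^\delta(a(n))\le r_k^\delta$ holds throughout (one of these two index sets is infinite). The diagonal choice $n_1<n_2<\cdots$ with $n_k\in N_k$, re-indexed as $\{a(n')\}_{n'}$, then satisfies $\tau_k^\delta(a(n'))\to r_k^\delta$ for every $k$ together with the dichotomy of (ii) for all $n'\ge k$, which gives (ii); since $j\mapsto\tau_j^\delta(\cdot)$ is non-decreasing (Remark~\ref{3.4}), so is $\{r_k^\delta\}_k$, which gives (i). Property (iii) is then immediate, since $|a(n')_{\tau_k^\delta(a(n'))}-a_{r_k^\delta}|\le\|a(n')-a\|_\infty+|a_{\tau_k^\delta(a(n'))}-a_{r_k^\delta}|$, where the first term vanishes by uniform convergence and the second by uniform continuity of $a$ on $[0,1]$ together with $\tau_k^\delta(a(n'))\to r_k^\delta$.

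\emph{(iv).} Suppose $r_{k-1}^\delta=r_k^\delta=r<1$. For $n'$ large, both $\tau_{k-1}^\delta(a(n'))$ and $\tau_k^\delta(a(n'))$ are $<1$; since $\tau_k^\delta(a(n'))$ is then the infimum of the nonempty set of times at which the displacement of $a(n')$ from its value at $\tau_{k-1}^\delta(a(n'))$ exceeds $\delta$, continuity of $a(n')$ forces $|a(n')_{\tau_k^\delta(a(n'))}-a(n')_{\tau_{k-1}^\delta(a(n'))}|\ge\delta$. Letting $n'\to\infty$ and using (iii) for the indices $k-1$ and $k$, the left-hand side tends to $|a_r-a_r|=0$, contradicting $\delta>0$; combined with $r_{k-1}^\delta\le r_k^\delta$ from (i), this yields $r_{k-1}^\delta<r_k^\delta$ whenever $r_k^\delta<1$.

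\emph{(v) and the main obstacle.} The crux is a bound on the number of partition points that is uniform in $n'$. By uniform continuity of $a$ there is $c>0$ with $|a_u-a_v|\le\delta/2$ whenever $|u-v|\le c$. For $n'$ large enough that $\|a(n')-a\|_\infty<\delta/4$, the displacement estimate of the previous paragraph shows that $\tau_{k-1}^\delta(a(n'))<1$ and $\tau_k^\delta(a(n'))<1$ imply $|a_{\tau_k^\delta(a(n'))}-a_{\tau_{k-1}^\delta(a(n'))}|>\delta/2$, hence $\tau_k^\delta(a(n'))-\tau_{k-1}^\delta(a(n'))>c$; summing these gaps shows $\tau_{k^\ast}^\delta(a(n'))=1$ with $k^\ast:=\lceil 1/c\rceil$ for all large $n'$, and passing to the limit gives $r_{k^\ast}^\delta=1$, which is (v). I expect this last step to be the real obstacle: one must prevent the limiting times $\{r_k^\delta\}_k$ from accumulating strictly below $1$, and the only available leverage is the uniform continuity of the limit path $a$ together with $\|a(n')-a\|_\infty\to0$. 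Arranging the one-sided dichotomy of (ii) compatibly within the diagonal extraction also takes a little bookkeeping, but it is routine.
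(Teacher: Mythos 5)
Your proof is correct, and for the extraction and properties (i)--(iii) it coincides with the paper's argument: nested subsequences plus a diagonal choice give the convergence $\tau^{\delta}_k(a(n'))\to r^{\delta}_k$ and the one-sided dichotomy of (ii), and (iii) follows from uniform convergence together with uniform continuity of $a$. For (iv) your route is essentially the paper's contradiction (the displacement between consecutive stopping values must be $\ge\delta$ yet tends to $|a_r-a_r|=0$ by (iii)), though you reach the key fact more directly: from $\tau^{\delta}_k(a(n'))\to r^{\delta}_k<1$ you get $\tau^{\delta}_k(a(n'))<1$ for large $n'$, hence the defining set is nonempty and the displacement bound holds, without the case analysis through (ii) that the paper carries out. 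The genuine difference is in (v): the paper argues softly by contradiction --- if $r^{\delta}_k<1$ for all $k$, then $|a_{r^{\delta}_k}-a_{r^{\delta}_{k-1}}|\ge\delta$ for every $k$, while $\{r^{\delta}_k\}_k$ is monotone and bounded, so $\{a_{r^{\delta}_k}\}_k$ is Cauchy, a contradiction --- whereas you use the uniform continuity modulus of $a$ and $\|a(n')-a\|_\infty<\delta/4$ to force a uniform gap $\tau^{\delta}_k(a(n'))-\tau^{\delta}_{k-1}(a(n'))>c$ between consecutive stopping times that are $<1$, which yields $\tau^{\delta}_{k^\ast}(a(n'))=1$ for $k^\ast=\lceil 1/c\rceil$ and all large $n'$, hence $r^{\delta}_{k^\ast}=1$. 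Your version is quantitative and uniform in $n'$ (a bound on the number of stopping times, in the spirit of the finite $k_0'$ used in Remark \ref{welldef}), and so gives slightly more than (v) requires; the paper's version is shorter and needs no modulus of continuity. Both arguments are valid, so this is a legitimate alternative for that step rather than a gap.
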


\begin{proof}
We adapt the argument of Theorem 6.5 in \cite{Ga08}. We fix $\delta>0$. 
For brevity, we write $\tau^{\delta}_k$ as $\tau_k$ (we apply the same notation to $r^{\delta}_k$).  
Since $\{\tau_1(a(n))\}_{n}$ is a sequence on $[0,1]$, there exists a subsequence $\{a(n^{(1)})\}_{n^{(1)}}$ of $\{a(n)\}_n$ and $r_1 \in [0,1]$ such that $\tau_1(a(n^{(1)})) \to r_1$ as $n^{(1)} \to \infty$. 
Since $\{\tau_2(a(n^{(1)}))\}_{n^{(1)}}$ is a sequence on $[0,1]$, there exists a subsequence $\{a(n^{(2)})\}_{n^{(2)}}$ of $\{a(n^{(1)})\}_{n^{(1)}}$ and $r_2 \in [0,1]$ such that $\tau_2(a(n^{(2)})) \to r_2$ as $n^{(2)} \to \infty$. 
By using the same argument, for all $k\in \mathbb{N}$, there exists a subsequence
\[
\{a(n^{(k)})\}_{n^{(k)}} \subset \{a(n^{(k-1)})\}_{n^{(k-1)}} \subset ... \subset \{a(n^{(1)})\}_{n^{(1)}} \subset \{a(n)\}_n
\]
and $r_k \in [0,1]$ such that $\tau_k (a(n^{(k)})) \to r_k$ as $n^{(k)}\to\infty$. 
Let $a(n'_k) := a(n^{(k)}_k)$ (here $n'_k$ means the $k$-th number of $n'$). 
Then we have that $\{a(n')\}_{n'}$ is a subsequence of $\{a(n)\}_n$, and for all $k\ge1$, $\{a(n'_{j})\}_{j\ge k} \subset \{a(n^{(k)})\}_{n^{(k)}}$. 
So we have that for $k \ge1$, $\tau_k(a(n'))\to r_k$ as $n' \to \infty$.
Since $\tau_{k-1}(a(n')) \le \tau_k (a(n'))$, we have $r_{k-1} \le r_k$. 
So this is a subsequence $\{a(n')\}_{n'}$ of $\{a(n)\}_n$ such that $\{a(n')\}_{n'}$ satisfies $(i)$.
We can also select a subsequence $\{a(n'')\}_{n''}$ of $\{a(n')\}_{n'}$ satisfies $(ii)$. We fix this subsequence $\{a(n'')\}_{n''}$ and rewrite $\{a(n'')\}_{n''}$ as $\{a(n')\}_{n'}$ for brevity.

It is straightforward to show $(iii)$ because of the fact that $\tau_{k} (a(n')) \to r_k$ and the uniform convergence of $\{a(n')\}_{n'}$. 

Let us verify the property $(iv)$. 
If this were not true, there exists $k \in \bbN$ such that $r = r_{k-1} = r_{k} <1$. 
From $(iii)$, we have
\[
    a(n')_{\tau_k(a(n'))}  \to a_r, \quad a(n')_{\tau_{k-1}(a(n')) } \to a_r, \quad \text{as $n' \to \infty$}.
\]
In particular, there exists $N'(\delta) \in \mathbb{N}$ such that if $n' \ge N'(\delta)$,
\[
    |a(n')_{\tau_k(a(n'))} - a(n')_{\tau_{k-1}(a(n'))} | <\delta.
\]
On the other hand, by using $(ii)$, we can prove that there exists $N''(k)\ge1$ such that if $n' \ge N''(k)$, $\tau_{k-1} (a(n'))<\tau_k(a(n'))$.
This is because in the case of $\tau_{k-1}(a(n')) < r_{k-1}$, $r_{k-1} <1$ implies that $\tau_{k-1}(a(n')) <1$ and so by Remark \ref{3.4}, we have that $\tau_{k-1} (a(n'))<\tau_k(a(n'))$. 
In the case of $\tau_{k-1}(a(n')) \ge r_{k-1}$, since $r_{k-1} <1$, there exists $N''(k-1)$ such that if $n' \ge N''(k-1)$,  $\tau_{k-1}(a(n')) <1$ and so $\tau_{k-1} (a(n'))<\tau_k(a(n'))$.
Then the definition of $\tau_k$ implies that if $n' \ge N''(k)$,
\[
\delta \le |a(n')_{\tau_k(a(n'))} -a(n')_{\tau_{k-1}(a(n')) } |,
\]
 and this is a contradiction.

It remains to verify $(v)$. 
If this were not true,  for all $k\ge 1$, $r_k<1$. 
By using the same argument in the proof of $(iv)$, we can prove that for all $k \ge1$, there exists $N'(k)\ge1$ such that if $n' \ge N''(k)$, $\tau_{k-1} (a(n'))<\tau_k(a(n'))$. 
Then by the definition of $\tau_k$,  if $n \ge N''(k)$,
\[
\delta \le |a(n')_{\tau_k(a(n'))} - a(n')_{\tau_{k-1}(a(n'))}|.
\]
Since $r_k <1$, $(iv)$ implies that $r_{r-1}<r_k$. Then $(iii)$ implies that
\[
\delta \le |a_{r_k} - a_{r_{k-1}}|.
\]
On the other hand, since $\{r_k\}_k$ is a non-decreasing and bounded sequence, there exists $R \in [0,1]$ such that $r_k \to R$ as $k \to \infty$ and so the continuity of $a$ implies that $a_{r_k} \to a_R$ as $k \to \infty$. In particular, $\{a_{r_k}\}$ is a Cauchy sequence. However, this is the contradiction.
\end{proof}

\begin{definition}[Definition 6.1 \cite{Ga08}]\label{almost compact}
Let $E_1,E_2$ be a metric space respectively. We say that a function $G:E_1 \rightarrow E_2$ is almost compact if for all $x \in E_1$ and  $\{x(n)\}_n \subset E_1$ with $x(n) \to x$ in $E_1$, there exists a subsequence $\{x(n_k) \}_k$ and $y \in E_2$ such that $G(x(n_k)) \to y$ in $E_2$.  
\end{definition}

\begin{lemma}\label{G}
For all $\delta>0$, $G_{\delta}$ is almost compact.
\end{lemma}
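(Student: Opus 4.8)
The plan is to use Lemma~\ref{num} to pin down the limit and then upgrade uniform convergence to $\beta$-H\"older convergence by interpolation. Fix $\delta>0$ and suppose $(a(n),x(n))\to(a,x)$ in $C([0,1])\times C_0^{\alpha\text{-Hld}}([0,1])$. Apply Lemma~\ref{num} to $\{a(n)\}_n$: this yields a subsequence, still written $\{a(n')\}_{n'}$, the limiting points $\{r_k^\delta\}_k$ (put $r_0^\delta:=0$), and a minimal index $k'$ with $r_{k'}^\delta=1$; pass to the same subsequence for $\{x(n')\}_{n'}$. The candidate limit is
\[
y_t:=\sum_{k=1}^{k'} a_{r_{k-1}^\delta}\bigl(x_{t\wedge r_k^\delta}-x_{t\wedge r_{k-1}^\delta}\bigr),\qquad t\in[0,1].
\]
Each stopped path $x_{\cdot\wedge r_k^\delta}$ has $\alpha$-H\"older seminorm at most $\|x\|_{\alpha\text{-Hld}}$, so $y$, a finite linear combination of such paths, lies in $C^{\alpha\text{-Hld}}([0,1])$ with $y_0=0$, hence $y\in C_0^{\beta\text{-Hld}}([0,1])$ since $\beta<\alpha$.

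The main step is a uniform bound on the number of ``active'' stopping times along the subsequence. I would show that $\tau_{k'+1}^\delta(a(n'))=1$ for all large $n'$: by uniform continuity of $a$ pick $\eta_0>0$ so that $a$ oscillates by less than $\delta/3$ on every subinterval of $[0,1]$ of length $\le\eta_0$; then for large $n'$ one has $\|a(n')-a\|_\infty<\delta/3$, and by Lemma~\ref{num}(i) $\tau_{k'}^\delta(a(n'))\to r_{k'}^\delta=1$, so $\tau_{k'}^\delta(a(n'))\ge1-\eta_0$. On $[\tau_{k'}^\delta(a(n')),1]\subseteq[1-\eta_0,1]$ the oscillation of $a(n')$ is therefore $<\delta$, so the set defining $\tau_{k'+1}^\delta(a(n'))$ is empty and the infimum (truncated at $1$) equals $1$. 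Consequently, for large $n'$,
\[
G_\delta(a(n'),x(n'))_t=\sum_{k=1}^{k'+1}a(n')_{\tau_{k-1}^\delta(a(n'))}\bigl(x(n')_{t\wedge\tau_k^\delta(a(n'))}-x(n')_{t\wedge\tau_{k-1}^\delta(a(n'))}\bigr),
\]
and the estimate of Remark~\ref{welldef} gives $\|G_\delta(a(n'),x(n'))\|_{\alpha\text{-Hld}}\le(k'+1)\sup_n\|a(n)\|_\infty\sup_n\|x(n)\|_{\alpha\text{-Hld}}=:M<\infty$ (the suprema being finite because convergent sequences are bounded).

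It then remains to pass to the limit termwise in the displayed finite sum and interpolate. For each fixed $k$, Lemma~\ref{num}(i),(iii) give $\tau_k^\delta(a(n'))\to r_k^\delta$ and $a(n')_{\tau_{k-1}^\delta(a(n'))}\to a_{r_{k-1}^\delta}$, while $x(n')\to x$ uniformly together with the uniform continuity of $x$ gives $\sup_{t}|x(n')_{t\wedge\tau_k^\delta(a(n'))}-x_{t\wedge r_k^\delta}|\to0$; expanding the products and using that the factors are bounded, each of the $k'+1$ terms converges uniformly in $t$, the $(k'+1)$-th one converging to $a_{r_{k'}^\delta}(x_{t\wedge r_{k'+1}^\delta}-x_{t\wedge r_{k'}^\delta})=0$ since $r_{k'}^\delta=r_{k'+1}^\delta=1$. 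Hence $G_\delta(a(n'),x(n'))\to y$ uniformly. Combining this with $\|G_\delta(a(n'),x(n'))-y\|_{\alpha\text{-Hld}}\le2M$ and the elementary interpolation estimate — for $|t-s|\le\rho$ the H\"older quotient at exponent $\beta$ is $\le2M\rho^{\alpha-\beta}$, for $|t-s|>\rho$ it is $\le2\|G_\delta(a(n'),x(n'))-y\|_\infty\rho^{-\beta}$, so choosing $\rho$ small and then $n'$ large makes $\|G_\delta(a(n'),x(n'))-y\|_{\beta\text{-Hld}}$ arbitrarily small — yields $G_\delta(a(n'),x(n'))\to y$ in $C_0^{\beta\text{-Hld}}([0,1])$, which is exactly the almost compactness of $G_\delta$. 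I expect the only genuine difficulty to be the uniform control on the number of stopping times; the remainder is bookkeeping plus the standard interpolation inequality.
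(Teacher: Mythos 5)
Your argument is correct, and it reaches the conclusion by a genuinely different route than the paper. The paper proves $\beta$-H\"older convergence of $G_\delta(a(n'),x(n'))$ to the limit directly: after Lemma \ref{num} it fixes $\eta$, imposes the conditions $(I)$--$(III)$ on $n'$, and then runs a nine-case analysis over the possible positions of $s,t$ relative to the stopping times $\tau_k(a(n'))$ and the limit points $r_k$, bounding each increment difference by a constant times $(\eta+\eta^{\alpha-\beta})|t-s|^{\beta}$. You instead split the task into three soft steps: (a) a uniform cap on the number of active stopping times for large $n'$ (your oscillation argument showing $\tau^{\delta}_{k'+1}(a(n'))=1$, which is correct since the oscillation of $a(n')$ on $[\tau^{\delta}_{k'}(a(n')),1]\subseteq[1-\eta_0,1]$ is strictly below $\delta$, so the defining set is empty and the truncated infimum is $1$); (b) termwise sup-norm convergence of the resulting finite sum to $y$, using Lemma \ref{num}$(i)$,$(iii)$ and uniform continuity of $x$; and (c) the interpolation inequality between the uniform norm and the $\alpha$-H\"older bound coming from Remark \ref{welldef}, which upgrades sup-norm convergence to $\beta$-H\"older convergence precisely because $\beta<\alpha$. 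What your approach buys is brevity and transparency: the case analysis disappears, the role of the strict inequality $\beta<\alpha$ is isolated in one standard interpolation step, and the uniform bound on the number of stopping times (which the paper handles only implicitly through its indexing by $r_k$, $k\le k_0$) is made explicit. What the paper's approach buys is a quantitative modulus: it shows directly that the $\beta$-H\"older distance is $\lesssim \eta+\eta^{\alpha-\beta}$ once $(I)$--$(III)$ hold, whereas your argument is purely qualitative (which is all that almost compactness, Definition \ref{almost compact}, requires). Both proofs use Lemma \ref{num} in the same way and produce the same limit path, so your proposal is a valid, and in fact simpler, substitute.
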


\begin{proof}
We fix $\delta>0$. 
Assume that $(a(n),x(n)) \to (a,x) $ in $ C([0,1]) \times C_0^{\alpha \text{-Hld}}([0,1])$.
Take a subsequence $\{a(n')\}$ of $\{a(n)\}$, and $\{r_k\}_k$ such that the properties of Lemma \ref{num} hold. 
Let $k_0$ be the smallest number such that $(v)$ holds in Lemma \ref{num} and let
\[
z_t  := \sum_{k=1}^{\infty} a_{r_{k-1}} \left( x_{t \wedge r_k} - x_{t \wedge r_{k-1}}\right), \quad t \in [0,1].
\]
By using the same argument as $G_\delta (a,x)$ in Remark \ref{welldef}, one can show that 
\begin{align*}
 |z_t - z_s  |   \le k_0 \|a\|_{\infty} \|x\|_{\alpha \text{-Hld}} |t-s|^{\alpha},
\end{align*}
and this implies that $z \in C_0^{\beta \text{-Hld}}([0,1])$.

We will show that $G_{\delta}(a(n'),x(n')) \to z$ in $C_0^{\beta \text{-Hld}}([0,1])$. 
We fix $\eta>0$ and we will prove that there exists $N(\eta) \in \bbN$ such that if $n \ge N(\eta)$,
\begin{align*}
\|G_\delta( a(n'),x(n')) - z \|_{\beta\text{-Hld}} \lesssim \eta.
\end{align*}
To show the assertion, we fix $0 \le s < t \le1$ and take the smallest number $l',l \in \mathbb{N}$  such that $s \in [r_{l'},r_{l'+1}]$ and $t \in [r_{l} ,r_{l+1}]$ ($l' \le l \le k_0-1$).

Let $\tilde{\delta} = \tilde{\delta} (k_0,\eta) := (\min_{0\le k \le k_0} |r_k -r_{k-1}| ) \wedge 1$ and $N(\eta) := \max_{0 \le k \le k_0} N(k,\eta)$, where $N(k,\eta)$ is a number such that if $n' \ge N(\eta,k)$ : for $0 \le k \le k_0$,
\begin{enumerate}
    \item[$(I)$]   $|\tau_k(a(n')) - r_k| < \tilde{\delta}/2$,
    \item[$(II)$]  $|a(n')_{\tau_k(a(n'))} - a_{r_k}| < \eta$,
    \item[$(III)$]  $\|a(n')- a\|_{\infty} < \eta$ and $\|x(n')- x\|_{\infty} < \eta$.
\end{enumerate}
Now we fix $n' \ge N(\eta)$. For brevity, we write $\tau_k(a(n'))$ as $\tau_k$.
By $(I)$, we can prove that $r_{k+1}<\tau_{k+2}$ and $\tau_{k-1}<r_k$ for all $0\le k \le k_0$. 
Hence we consider the following nine cases.
\begin{enumerate}
\item[(Case1)] $s\in[\tau_{l'},\tau_{l'+1}]$ and $t\in[\tau_{l},\tau_{l+1}]$.
\begin{align*}
    &|G_{\delta}(a(n'),x(n'))_t - G_{\delta}(a(n'),x(n'))_s - (z_t - z_s)|\\ 
    & \le |a(n')_{\tau_l} (x(n')_t - x(n')_{\tau_l} ) - a_{r_l} (x_t - x_{r_l})| \\
    & \quad + \sum_{k=l'+2}^l |a(n')_{\tau_{k-1}} (x(n')_{\tau_k} - x(n')_{\tau_{k-1}} ) - a_{r_{k-1}} (x_{r_k} - x_{r_{k-1}})| \\
    & \quad + |a(n')_{\tau_{l'}} (x(n')_{\tau_{l'+1}} - x(n')_s ) - a_{r_{l'}} (x_{r_{l'+1}} - x_s)| =: T_{11}+T_{12}+T_{13}.
\end{align*}
Since $|\tau_l - r_l| < |t-s|$, $|\tau_l - t| < |t-s|$, and $|t - r_l| < |t-s|$, $(I)$ to $(III)$ imply that 
\begin{align*}
    T_{11} 
    & \le |a(n')_{\tau_l} - a_{r_l}| |x(n')_t - x(n')_{\tau_l}| + |a_{r_l}| |x(n')_t - x(n')_{\tau_l} - (x_t - x_{r_l})|\\
    & \le |a(n')_{\tau_l} - a_{r_l}| \|x(n')\|_{\alpha\text{-Hld}}  |t - \tau_l|^{\alpha}\\
    & \quad + \|a\|_{\infty} \left\{  |x(n')_t - x(n')_{r_l} -( x_t  - x_{r_l})| 
    + |x(n')_{r_l} - x(n')_{\tau_l} | \right\}\\
    & \le \Big [  |a(n')_{\tau_l} - a_{r_l}|\|x(n')\|_{\alpha\text{-Hld}}
    + \|a\|_{\infty} \|x(n')-x\|_{\alpha\text{-Hld}}  \\
    & \quad + \|a\|_{\infty} \|x(n')\|_{\alpha\text{-Hld}} |\tau_l -r_l|^{\alpha-\beta} \Big] |t-s|^{\beta}\\ 
    & \lesssim (\eta + \eta^{\alpha-\beta})|t-s|^{\beta}.
\end{align*}
Since $|r_k - r_{k-1}|<|t-s|$, $|\tau_k - \tau_{k-1}| < |t-s|$, and $|\tau_k - r_k| < |t-s|$ for all $l'+2 \le k \le l$, $(I)$ to $(III)$ imply that
\begin{align*}
   T_{12} 
    & \le |a(n')_{\tau_{k-1}} - a_{r_{k-1}}| |x(n')_{\tau_k} - x(n')_{\tau_{k-1}} | \\
    &\quad + |a_{r_{k-1}}\|x(n')_{\tau_{k}} - x(n')_{\tau_{k-1}} - (x_{r_k} - x_{r_{k-1}})|\\
    & \le |a(n')_{\tau_{k-1}} - a_{r_{k-1}}|  \|x(n')\|_{\alpha\text{-Hld}}  |\tau_{k} - \tau_{k-1}|^{\alpha} \\
    & \quad + \|a\|_{\infty} \Big \{ |x(n')_{r_k} - x(n')_{r_{k-1}} - (x_{r_k} - x_{r_{k-1}})| \\
    & \quad +  |x(n')_{\tau_k} - x(n')_{\tau_{k-1}} - (x(n')_{r_k} - x(n')_{r_{k-1}})| \Big \}\\
    & \le \Big \{   |a(n')_{\tau_{k-1}} - a_{r_{k-1}} | \|x(n')\|_{\alpha\text{-Hld}}
    + \|a\|_{\infty}\|x(n')-x\|_{\alpha\text{-Hld}} \\
    & \qquad + \|a\|_{\infty} \|x(n')\|_{\alpha\text{-Hld}} \{ |\tau_k -r_k|^{\alpha-\beta} + |\tau_{k-1} -r_{k-1}|^{\alpha-\beta}  \}  \Big \} |t-s|^{\beta}\\
    & \lesssim (\eta + \eta^{\alpha-\beta})|t-s|^\beta,
\end{align*}
and we can estimate $T_{12}$.
Since $|\tau_{l'+1} - s| < |t-s|$, $|r_{l'+1} -s| < |t-s|$, and $|\tau_{l'+1} - r_{l'+1}| < |t-s|$,
\begin{align*}
    T_{13} 
    & \le |a(n')_{\tau_{l'}} - a_{r_{l'}}| |x(n')_{\tau_{l'+1}} - x(n')_s | \\
    & \quad + |a_{r_{l'}}\|x(n')_{\tau_{l'+1}} - x(n')_s - (x_{r_{l'+1}} - x_s)|\\
    & \le |a(n')_{\tau_{l'}} - a_{r_{l'}}| \|x(n')\|_{\alpha \text{-Hld}} |\tau_{l'+1} -s|^{\alpha}\\
    & \quad + \|a\|_{\infty} \Big \{  |x(n')_{r_{l'+1}} - x(n')_s - (x_{r_{l'+1}} - x_s)|  +|x(n')_{r_{l'+1}} - x(n')_{\tau_{l'+1}} | \Big \}\\
    & \le \Big \{  |a(n')_{\tau_{l'}} - a_{r_{l'}}| \|x(n')\|_{\alpha\text{-Hld}} \\
    & \quad + \|a\|_{\infty} \{  \|x(n')-x\|_{\alpha \text{-Hld}}  + \|x(n')\|_{\alpha \text{-Hld}} |r_{l'+1} -\tau_{l'+1}|^{\alpha-\beta} \} \Big \} |t-s|^{\beta}\\
    & \lesssim (\eta + \eta^{\alpha-\beta})|t-s|^{\beta},
\end{align*}
and so by using these inequalities, we have that
\[
    |G_{\delta}(a(n'),x(n'))_t - G_{\delta}(a(n'),x(n'))_s - (z_t - z_s)| \lesssim (\eta + \eta^{\alpha-\beta})|t-s|^\beta.
\]
\item[(Case2)] $s\in[\tau_{l'},\tau_{l'+1}]$ and $t\in[\tau_{l+1},\tau_{l+2}]$.
\begin{align*}
    &|G_{\delta}(a(n'),x(n'))_t - G_{\delta}(a(n'),x(n'))_s - (z_t - z_s)|\\ 
    & \le |a(n')_{\tau_{l+1}}(x(n')_t  - x(n')_{\tau_{l+1}} )| + |a(n')_{\tau_l} (x(n')_{\tau_{l+1}} - x(n')_{\tau_l} ) - a_{r_l} (x_t - x_{r_l})|  \\
    & \quad + \sum_{k=l'+2}^{l} |a(n')_{\tau_{k-1}} (x(n')_{\tau_k} - x(n')_{\tau_{k-1}} ) - a_{r_{k-1}} (x_{r_k} - x_{r_{k-1}})| \\
    & \quad + |a(n')_{\tau_{l'}} (x(n')_{\tau_{l'+1}} - x(n')_s ) - a_{r_{l'}} (x_{r_{l'+1}} - x_s)| =: T_{21}+T_{22}+T_{23}+T_{24}.
\end{align*}
We can estimate $T_{23}$ and $T_{24}$ as the same argument of $T_{12}$ and $T_{13}$ in (Case1). 
To estimate $T_{21}$,  let us note that $(I)$ implies 
\[
     \tau_{l+1}  \le t < r_{l+1} < \tau_{l+1} + \tilde{\delta}/2,
\]
and so $|t-\tau_{l+1}| < \tilde{\delta}/2$. Then $|\tau_{l+1} - t|<|t-s|$ implies that 
\begin{align*}
    T_{21}
      \le \|a(n')\|_{\infty} \|x(n')\|_{\alpha\text{-Hld}} |t - \tau_{l+1}|^{\alpha-\beta}  |t-s|^{\beta}
      \lesssim \eta^{\alpha-\beta} |t-s|^{\beta}.
\end{align*}
Since $|\tau_{l+1} - \tau_l|<|t-s|$, $|r_l - \tau_l|<|t-s|$, and $|\tau_{l+1} - t|<|t-s|$, $(I)$ to $(III)$ imply that
\begin{align*}
    T_{22}
    & \le |a(n')_{\tau_l} - a_{r_l}| |x(n')_{\tau_{l+1}} - x(n')_{\tau_l} |  + |a_{r_l}\|x(n')_{\tau_{l+1}} - x(n')_{\tau_l} - (x_t - x_{r_l})|\\
    & \le  |a(n')_{\tau_l} - a_{r_l}| \|x(n')\|_{\alpha\text{-Hld}} |\tau_{l+1}- \tau_l|^{\alpha}\\
    & \quad + \|a\|_{\infty} \Big  \{ |x(n')_{\tau_{l+1}} -x(n')_{\tau_l} -  ( x_{\tau_{l+1}} - x_{\tau_l})| +  |x_t -x_{r_l} -  ( x_{\tau_{l+1}} - x_{\tau_{l}}) | \Big \}\\
    & \le \Big \{  |a(n')_{\tau_l} - a_{r_l}|  \|x(n')\|_{\alpha\text{-Hld}}\\
    & \quad + \|a\|_{\infty} \{ \|x(n')-x\|_{\alpha\text{-Hld}}  + \|x\|_{\alpha\text{-Hld}} \{ |t- \tau_{l+1} |^{\alpha-\beta}+|\tau_l- r_l |^{\alpha-\beta}\} \}  \Big \} |t-s|^{\beta}\\
    &\lesssim (\eta + \eta^{\alpha-\beta})|t-s|^{\beta}.
\end{align*}

\item[(Case3)] $s\in[\tau_{l'},\tau_{l'+1}]$ and $t\in[\tau_{l-1},\tau_{l}]$.
\begin{align*}
    &|G_{\delta}(a(n'),x(n'))_t - G_{\delta}(a(n'),x(n'))_s - (z_t - z_s)|\\ 
    & \le |a_{r_l} (x_t  - x_{r_l} )|
    + |a(n')_{\tau_{l-1}} (x(n')_t - x(n')_{\tau_{l-1}} ) - a_{r_{l-1}} (x_{r_l} - x_{r_{l-1}})| \\
    & \quad + \sum_{k=l'+2}^{l-1} |a(n')_{\tau_{k-1}} (x(n')_{\tau_k} - x(n')_{\tau_{k-1}} ) - a_{r_{k-1}} (x_{r_k} - x_{r_{k-1}})| \\
    & \quad + |a(n')_{\tau_{l'}} (x(n')_{\tau_{l'+1}} - x(n')_s ) - a_{r_{l'}} (x_{r_{l'+1}} - x_s)| =: T_{31}+T_{32}+T_{33}+T_{34}.
\end{align*}
We can estimate $T_{33}$ and $T_{34}$ as the same argument $T_{12}$ and $T_{13}$ in (Case1). 
To estimate $T_{31}$,  let us note that $(I)$ implies 
\[
    r_l - \tilde{\delta}/2 < t \le \tau_l < r_l + \tilde{\delta}/2
\]
and so $|t-r_l| < \tilde{\delta}/2$.  Since  $|t-r_l|<|t-s|$, $(II)$ implies that
\begin{align*}
    T_{31} 
    \le \|a\|_{\infty} |x_t - x_{r_l}| 
    \le \|a\|_{\infty}\|x\|_{\alpha\text{-Hld}} |t - r_l|^{\alpha}
    \lesssim \eta^{\alpha-\beta} |t - s|^{\beta}.
\end{align*}
On the other hand, since $|t-r_l|<|t-s|$, $|t-\tau_{l-1}|<|t-s|$, and $|\tau_{l-1}-r_{l-1}|<|t-s|$,  $(I)$ to $(III)$ imply that
\begin{align*}
    T_{32} 
    & \le |a_{r_{l-1}}- a(n')_{\tau_{l-1}}| |x(n')_t - x(n')_{\tau_{l-1}}|  + |a_{r_{l-1}}| | x(n')_t - x(n')_{\tau_{l-1}} - (x_{r_l} -x_{r_{l-1}})|\\
    & \le |a_{r_{l-1}}- a(n')_{\tau_{l-1}}| \|x(n')\|_{\alpha \text{-Hld}} |t-\tau_{l-1}|^{\alpha} \\
    & \quad + \|a\|_{\infty} \Big \{ |x_t -x_{r_{l-1}} - (x(n')_t - x(n')_{r_{l-1}})| + | (x(n')_{r_{l-1}} - x(n')_{\tau_{l-1}})| +  |x_{r_l} -x_t|  \Big \} \\
    & \le \Big \{  |a_{r_{l-1}}- a(n')_{\tau_{l-1}}| \|x(n')\|_{\alpha \text{-Hld}}  + \|a\|_{\infty} \Big \{  \|x(n') -x\|_{\alpha\text{-Hld}}  \\
    & \quad + \|x(n') \|_{\alpha\text{-Hld}} | r_{l-1} - \tau_{l-1}|^{\alpha-\beta}   + \|x\|_{\alpha\text{-Hld}}  |t -r_l|^{\alpha -\beta}  \Big\} |t-s|^\beta\\
    & \lesssim (\eta + \eta^{\alpha-\beta})|t-s|^\beta.
\end{align*}

\item[(Case4)] $s\in[\tau_{l'+1},\tau_{l'+2}]$ and $t\in[\tau_{l},\tau_{l+1}]$.
\begin{align*}
    &|G_{\delta}(a(n'),x(n'))_t - G_{\delta}(a(n'),x(n'))_s - (z_t - z_s)|\\ 
    & \le |a(n')_{\tau_l} (x(n')_t - x(n')_{\tau_l} ) - a_{r_l} (x_t - x_{r_l})| \\
    & \quad + \sum_{k=l'+3}^l |a(n')_{\tau_{k-1}} (x(n')_{\tau_k} - x(n')_{\tau_{k-1}} ) - a_{r_{k-1}} (x_{r_k} - x_{r_{k-1}})| \\
    & \quad + |a(n')_{\tau_{l'+1}} (x(n')_{\tau_{l'+2}} - x(n')_s ) - a_{r_{l'+1}} (x_{r_{l'+2}} - x_{r_{l'+1}})| + |a_{\tau_{l'}} (x_{r_{l'+1}} - x_s )|\\
    & = :T_{41}+T_{42}+T_{43}+T_{44}.
\end{align*}
We can estimate $T_{41}$ and $T_{42}$ as the same argument $T_{11}$ and $T_{12}$ in (Case1). 
To estimate $T_{43}$ and $T_{44}$, let us note that by using the same argument in (Case2), $|s-r_{l'+1}|<\tilde{\delta}/2$.
Since $|r_{l'+2} -s|<|t-s|$, $|r_{l'+1} -s|<|t-s|$, and $|r_{l'+2} -\tau_{l'+2}|<|t-s|$, $(I)$ to $(III)$ imply that
\begin{align*}
    T_{43}
    & \le |a(n')_{\tau_{l'+1}} - a_{r_{l'+1}} | |x(n')_{\tau_{l'+2}} - x(n')_s | + |a_{r_{l'+1}}| |x(n')_{\tau_{l'+2}} - x(n')_s  - (x_{r_{l'+2}} - x_{r_{l'+1}})|\\
    & \le |a(n')_{\tau_{l'+1}} - a_{r_{l'+1}} | \|x(n')\|_{\alpha\text{-Hld}} |\tau_{l'+2} -s|^{\alpha} \\
    & \quad + \|a\|_{\infty} \Big \{  |x(n')_{r_{l'+2}} - x(n')_s  - (x_{r_{l'+2}} - x_s)|  + |x_{r_{l'+1}} -x_s| + |x(n')_{r_{l'+2}} - x(n')_{\tau_{l'+2}}| \Big \}\\
    & \le \Big \{  |a(n')_{\tau_{l'+1}} - a_{r_{l'+1}} | \|x(n')\|_{\alpha\text{-Hld}} + \|a\|_{\infty} \Big \{  \|x(n')-x\|_{\alpha\text{-Hld}} \\
    & \quad + \|x\|_{\alpha\text{-Hld}} |r_{l'+1} -s|^{\alpha-\beta}  + \|x (n')\|_{\alpha\text{-Hld}} |r_{l'+2} - \tau_{l'+2}|^{\alpha-\beta} \Big \} \Big\}|t-s|^\beta\\
    & \lesssim (\eta+\eta^{\alpha-\beta}) |t-s|^\beta.
\end{align*}
Since $|r_{l'+1} -s|<|t-s|$, 
\begin{align*}
    T_{44} \le \|a\|_{\infty} \|x\|_{\alpha\text{-Hld}} |r_{l'+1} -s|^{\alpha}
 \lesssim \eta^{\alpha-\beta}|t-s|^\beta.
\end{align*}
\item[(Case5)] $s\in[\tau_{l'+1},\tau_{l'+2}]$ and $t\in[\tau_{l+1},\tau_{l+2}]$.
\begin{align*}
    &|G_{\delta}(a(n'),x(n'))_t - G_{\delta}(a(n'),x(n'))_s - (z_t - z_s)|\\ 
    & \le |a(n')_{\tau_{l+1}}(x(n')_t  - x(n')_{\tau_{l+1}} )|  \\
    & \quad +|a(n')_{\tau_l} (x(n')_{\tau_{l+1}} - x(n')_{\tau_l} ) - a_{r_l} (x_t - x_{r_l})|\\
    & \quad + \sum_{k=l'+3}^l |a(n')_{\tau_{k-1}} (x(n')_{\tau_k} - x(n')_{\tau_{k-1}} ) - a_{r_{k-1}} (x_{r_k} - x_{r_{k-1}})| \\
    & \quad + |a(n')_{\tau_{l'+1}} (x(n')_{\tau_{l'+2}} - x(n')_s ) - a_{r_{l'+1}} (x_{r_{l'+2}} - x_{r_{l'+1}})|\\
    & \quad + |a_{\tau_{l'}} (x_{r_{l'+1}} - x_s )| = :T_{51}+T_{52}+T_{53}+T_{54}+T_{55}.
\end{align*}
We can estimate $T_{51}$ and $T_{52}$ as the same argument $T_{21}$ and $T_{22}$ in (Case2) and $T_{53}$ as the same argument $T_{12}$ in (Case1).
We can also estimate $T_{54}$ and $T_{55}$ as the same argument $T_{43}$ and $T_{44}$ in (Case4). 
\item[(Case6)] $s\in[\tau_{l'+1},\tau_{l'+2}]$ and $t\in[\tau_{l-1},\tau_{l}]$.
\begin{align*}
    &|G_{\delta}(a(n'),x(n'))_t - G_{\delta}(a(n'),x(n'))_s - (z_t - z_s)|\\ 
    & \le |a_{r_l} (x_t  - x_{r_l} )|
    + |a(n')_{\tau_{l-1}} (x(n')_t - x(n')_{\tau_{l-1}} ) - a_{r_{l-1}} (x_{r_l} - x_{r_{l-1}})| \\
    & \quad + \sum_{k=l'+3}^{l-1} |a(n')_{\tau_{k-1}} (x(n')_{\tau_k} - x(n')_{\tau_{k-1}} ) - a_{r_{k-1}} (x_{r_k} - x_{r_{k-1}})| \\
    & \quad + |a(n')_{\tau_{l'+1}} (x(n')_{\tau_{l'+2}} - x(n')_s ) - a_{r_{l'+1}} (x_{r_{l'+2}} - x_{r_{l'+1}})| + |a_{\tau_{l'}} (x_{r_{l'+1}} - x_s )|\\
    & = T_{61}+T_{62}+T_{63}+T_{64}+T_{65}.
\end{align*}
We can estimate $T_{61}$ and $T_{62}$ as the same argument $T_{31}$ and $T_{32}$ in (Case3) and $T_{63}$ as the same argument $T_{12}$ in (Case1).
We can also estimate $T_{64}$ and $T_{65}$ as the same argument $T_{43}$ and $T_{44}$ in (Case4). 
\item[(Case7)] $s\in[\tau_{l'-1},\tau_{l'}]$ and $t\in[\tau_{l},\tau_{l+1}]$.
\begin{align*}
    &|G_{\delta}(a(n'),x(n'))_t - G_{\delta}(a(n'),x(n'))_s - (z_t - z_s)|\\ 
    & \le |a(n')_{\tau_l} (x(n')_t - x(n')_{\tau_l} ) - a_{r_l} (x_t - x_{r_l})| \\
    & \quad + \sum_{k=l'+2}^l |a(n')_{\tau_{k-1}} (x(n')_{\tau_k} - x(n')_{\tau_{k-1}} ) - a_{r_{k-1}} (x_{r_k} - x_{r_{k-1}})| \\
    & \quad + |a(n')_{\tau_{l'}} (x(n')_{\tau_{l'+1}} - x(n')_{\tau_{l'}} ) - a_{r_{l'}} (x_{r_{l'+1}} - x_s)|\\
    & \quad + |a(n')_{\tau_{l'-1}} (x(n')_{\tau_{l'}} - x(n')_s )| =: T_{71}+T_{72}+T_{73}+T_{74}.
\end{align*}
We can estimate $T_{71}$ and $T_{72}$ as the same argument $T_{11}$ and $T_{12}$ in (Case1). 
To estimate $T_{73}$ and $T_{74}$, let us note that by using the same argument in (Case2), $|s-\tau_{l'}|<\tilde{\delta}/2$.
Since $|\tau_{l'+1}-\tau_{l'}|<|t-s|$, $| \tau_{l'+1}- r_{l'+1}|<|t-s|$ , and $| \tau_{l'} -s|<|t-s|$, $(I)$ to $(III)$ imply that 
\begin{align*}
    T_{73}
    & \le |a(n')_{\tau_{l'}} - a_{r_{l'}}| |x(n')_{\tau_{l'+1}} - x(n')_{\tau_{l'}}| \\
    & \quad + |a_{r_{l'}}| |x(n')_{\tau_{l'+1}} - x(n')_{\tau_{l'}} - (x_{r_{l'+1}} - x_s)|\\
    & \le |a(n')_{\tau_{l'}} - a_{r_{l'}}| \|x(n')\|_{\alpha\text{-Hld}} |\tau_{l'+1} - \tau_{l'}|^{\alpha} \\
    & \quad + \|a\|_{\infty} \Big \{  |x(n')_{\tau_{l'+1}} - x(n')_{\tau_{l'}} - (x_{\tau_{l'+1}} - x_{\tau_{l'}}) +|x_{\tau_{l'+1}} - x_{\tau_{l'}} - (x_{r_{l'+1}} - x_s)| \Big \}\\
    & \le \Big  \{ |a(n')_{\tau_{l'}} - a_{r_{l'}}| \|x(n')\|_{\alpha\text{-Hld}} + \|a\|_{\infty} \Big\{  \|x(n')-x\|_{\alpha\text{-Hld}} \\
    & \quad + \|x\|_{\alpha\text{-Hld}} ( |\tau_{l'+1} - r_{l'+1}|^{\alpha-\beta} + |s-\tau_{l'}|^{\alpha -\beta} ) \Big \} \Big \} |t-s|^\beta  \lesssim (\eta+\eta^{\alpha-\beta})|t-s|^\beta.
\end{align*}
Since $|\tau_{l'} -s|<|t-s|$,
\begin{align*}
    T_{74} \le \|a(n')\|_{\infty} \|x(n')\|_{\alpha\text{-Hld}} |\tau_{l'} -s|^{\alpha}
    \lesssim \eta^{\alpha-\beta} |t-s|^{\beta}.
\end{align*}
\item[(Case8)] $s\in[\tau_{l'-1},\tau_{l'}]$ and $t\in[\tau_{l+1},\tau_{l+2}]$.
\begin{align*}
    &|G_{\delta}(a(n'),x(n'))_t - G_{\delta}(a(n'),x(n'))_s - (z_t - z_s)|\\ 
    & \le |a(n')_{\tau_{l+1}}(x(n')_t  - x(n')_{\tau_{l+1}} )|  + |a(n')_{\tau_l} (x(n')_{\tau_{l+1}} - x(n')_{\tau_l} ) - a_{r_l} (x_t - x_{r_l})|\\
    & \quad + \sum_{k=l'+2}^l |a(n')_{\tau_{k-1}} (x(n')_{\tau_k} - x(n')_{\tau_{k-1}} ) - a_{r_{k-1}} (x_{r_k} - x_{r_{k-1}})| \\
    & \quad + |a(n')_{\tau_{l'}} (x(n')_{\tau_{l'+1}} - x(n')_{\tau_{l'}} ) - a_{r_{l'}} (x_{r_{l'+1}} - x_s)|+ |a(n')_{\tau_{l'-1}} (x(n')_{\tau_{l'}} - x(n')_s )|\\
    & = T_{81}+T_{82}+T_{83}+T_{84}+T_{85}.
\end{align*}
We can estimate $T_{81}$ and $T_{82}$ as the same argument $T_{21}$ and $T_{22}$ in (Case2) and $T_{83}$ as the same argument $T_{12}$ in (Case1).
We can also estimate $T_{84}$ and $T_{85}$ as the same argument $T_{73}$ and $T_{74}$ in (Case7). 
\item[(Case9)] $s\in[\tau_{l'-1},\tau_{l'}]$ and $t\in[\tau_{l-1},\tau_{l}]$.
\begin{align*}
    &|G_{\delta}(a(n'),x(n'))_t - G_{\delta}(a(n'),x(n'))_s - (z_t - z_s)|\\ 
    & \le |a_{r_l} (x_t  - x_{r_l} )|
    + |a(n')_{\tau_{l-1}} (x(n')_t - x(n')_{\tau_{l-1}} ) - a_{r_{l-1}} (x_{r_l} - x_{r_{l-1}})| \\
    & \quad + \sum_{k=l'+2}^{l-1} |a(n')_{\tau_{k-1}} (x(n')_{\tau_k} - x(n')_{\tau_{k-1}} ) - a_{r_{k-1}} (x_{r_k} - x_{r_{k-1}})| \\
    & \quad + |a(n')_{\tau_{l'}} (x(n')_{\tau_{l'+1}} - x(n')_{\tau_{l'}} ) - a_{r_{l'}} (x_{r_{l'+1}} - x_s)|+ |a(n')_{\tau_{l'-1}} (x(n')_{\tau_{l'}} - x(n')_s )|\\
    & = : T_{91}+T_{92}+T_{93}+T_{94}+T_{95}.
\end{align*}
We can estimate $T_{91}$ and $T_{92}$ as the same argument $T_{31}$ and $T_{32}$ in (Case3) and $T_{93}$ as the same argument $T_{12}$ in (Case1).
We can also estimate $T_{94}$ and $T_{95}$ as the same argument $T_{73}$ and $T_{74}$ in (Case7). 
\end{enumerate}
By using the all estimation of (Case1)-(Case9), we conclude that
\[
    |G_{\delta}(a(n'),x(n'))_t - G_{\delta}(a(n'),x(n'))_s - (z_t - z_s)| \lesssim (\eta+\eta^{\alpha-\beta})|t-s|^\beta,
\]
and this is the claim.
\end{proof}

\begin{definition}[Definition 4.2.14 in \cite{DeZe}]\label{3.1}
    Let $(\Omega,\mathcal{F},\mathsf{P})$ be a probability space and  $\delta, \epsilon > 0$. 
    Let  $Z^{\delta,\epsilon}$ and $Z^{\epsilon}$ be random functions on a metric space $(E,d_E)$ respectively. 
    We say that $\{Z^{\delta,\epsilon}\}_{\delta,\epsilon > 0}$ are exponentially good approximation of $\{ Z^{\epsilon} \}_{\epsilon > 0}$ if for every $\eta>0$,
    $$\{\omega  \in \Omega: d_E (Z^{\delta ,\epsilon} (\omega),Z^{\epsilon} (\omega) ) > \eta \} \in \mathcal{F},$$
    and   
    \begin{align*}
        \lim_{\delta \searrow 0}\limsup_{\epsilon \searrow 0} \epsilon \log \PROB{d_E (Z^{\delta,\epsilon} ,Z^{\epsilon}) > \eta } = - \infty.
    \end{align*}
\end{definition}

Now let us note that one can represent $G_\delta$ as a stochastic integral. For $ \{ \tau_{k}^\delta \} = \{\tau_{k}^\delta (A^\epsilon)  \} $, let 
    \[
    \Psi_{\delta} (A^\epsilon)_t 
    := \sum_{k=1}^{\infty} A^\epsilon_{\tau^{\delta}_{k-1} } 1_{ (\tau^{\delta}_{k-1}, \tau^{\delta}_k ]} (t), \quad t \in [0,1].
    \]
Then the definition of $\{\tau_k^\delta\}$, the process $\{ \Psi_{\delta} (A^\epsilon) \}_{\delta,\epsilon}$ is a family of adapted, left continuous with right limits processes on $[0,1]$. 
Therefore, we have that
\begin{align}\label{inte}
    G_{\delta} (A^\epsilon,X^\epsilon)_t = \int_0^t \Psi_{\delta} (A^\epsilon)_r  \D X^\epsilon_r  , \quad t \in [0,1],
\end{align}  
where the integration in the right hand side is It\^{o} integral. 
By Remark \ref{welldef},  we have $\Psi_\delta( A^{\epsilon})\cdot X^{\epsilon} \in C^{\beta\text{-Hld}}_0([0,1])$.

\begin{lemma}\label{ega}
If $\{X^{\epsilon} \}_{\epsilon}$ is $\alpha$-Uniformly Exponentially Tight,  $\{ \Psi_{\delta} ( A^{\epsilon}) \cdot X^{\epsilon} \}_{\epsilon,\delta}$ is exponentially good approximation of $\{ A^{\epsilon} \cdot X^{\epsilon} \}_{\epsilon}$ on $ C_0^{\beta \text{-Hld}}([0,1])$: for all $\eta>0$ and $M>0$, there exists $\delta(\eta,M)>0$ such that if $0 < \delta < \delta(\eta,M)$, 
 \[
     \limsup_{\epsilon \searrow 0} \epsilon \log  \PROB { \|  \left(  \Psi_{\delta} ( A^{\epsilon}) -  A^{\epsilon} \right) \cdot X^{\epsilon} \|_{\beta \text{-Hld}}> \eta } \le -M.
 \]
\end{lemma}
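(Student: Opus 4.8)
The plan is to reduce the statement directly to the defining inequality of $\alpha$-Uniformly Exponentially Tightness, using that the approximation error $\Psi_\delta(A^\epsilon)-A^\epsilon$ is uniformly bounded by $\delta$ and that on $[0,1]$ the $\beta$-H\"older norm is dominated by the $\alpha$-H\"older norm.

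First I would check that, after rescaling, the integrand is an admissible test integrand. Fix $\epsilon,\delta>0$ and set $U^{\delta,\epsilon}_t:=\delta^{-1}\big(\Psi_\delta(A^\epsilon)_t-A^\epsilon_t\big)$. Since $\Psi_\delta(A^\epsilon)$ is adapted and left continuous with right limits and $A^\epsilon$ is adapted and continuous, $U^{\delta,\epsilon}$ is adapted and left continuous with right limits. On each interval $(\tau^\delta_{k-1},\tau^\delta_k]$ the definition of $\tau^\delta_k$ together with the continuity of $A^\epsilon$ gives $|A^\epsilon_t-A^\epsilon_{\tau^\delta_{k-1}}|\le\delta$ (this bound persists at the right endpoint $\tau^\delta_k$ precisely because $A^\epsilon$ is continuous), so $\sup_{t\in[0,1]}|\Psi_\delta(A^\epsilon)_t-A^\epsilon_t|\le\delta$ and hence $\sup_{t\in[0,1]}|U^{\delta,\epsilon}_t|\le 1$. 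Thus $U^{\delta,\epsilon}\in\mathcal{B}([0,1],\bbR)$, and by linearity of the It\^{o} integral $(\Psi_\delta(A^\epsilon)-A^\epsilon)\cdot X^\epsilon=\delta\,(U^{\delta,\epsilon}\cdot X^\epsilon)$, a continuous adapted process vanishing at $0$; by the Remark following Theorem \ref{main1} the event $\{\|(\Psi_\delta(A^\epsilon)-A^\epsilon)\cdot X^\epsilon\|_{\beta\text{-Hld}}>\eta\}$ is $\mathcal{F}$-measurable, which supplies the measurability requirement in Definition \ref{3.1}.

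Next I would exploit that $\beta<\alpha$: since $|t-s|\le 1$ implies $|t-s|^\beta\ge|t-s|^\alpha$, we have $\|v\|_{\beta\text{-Hld}}\le\|v\|_{\alpha\text{-Hld}}$ for every continuous $v$ with $v_0=0$. Given $M>0$, use the $\alpha$-Uniformly Exponentially Tightness of $\{X^\epsilon\}$ to choose $K_M>0$ with $\limsup_{\epsilon\searrow0}\epsilon\log\sup_{U\in\mathcal{B}([0,1],\bbR)}\PROB{\|U\cdot X^\epsilon\|_{\alpha\text{-Hld}}\ge K_M}\le -M$, and set $\delta(\eta,M):=\eta/K_M$. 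Then for $0<\delta<\delta(\eta,M)$ one has $\eta/\delta>K_M$, so
\begin{align*}
\PROB{\|(\Psi_\delta(A^\epsilon)-A^\epsilon)\cdot X^\epsilon\|_{\beta\text{-Hld}}>\eta}
&=\PROB{\|U^{\delta,\epsilon}\cdot X^\epsilon\|_{\beta\text{-Hld}}>\eta/\delta}\\
&\le \sup_{U\in\mathcal{B}([0,1],\bbR)}\PROB{\|U\cdot X^\epsilon\|_{\alpha\text{-Hld}}\ge K_M},
\end{align*}
and applying $\limsup_{\epsilon\searrow0}\epsilon\log(\cdot)$ to both sides yields the bound $-M$; since $M$ is arbitrary this is exactly the assertion, and together with Remark \ref{welldef} (giving $\Psi_\delta(A^\epsilon)\cdot X^\epsilon\in C_0^{\beta\text{-Hld}}([0,1])$) it shows $\{\Psi_\delta(A^\epsilon)\cdot X^\epsilon\}_{\epsilon,\delta}$ is an exponentially good approximation of $\{A^\epsilon\cdot X^\epsilon\}_\epsilon$ on $C_0^{\beta\text{-Hld}}([0,1])$.

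I do not expect a genuine obstacle here; the argument is essentially bookkeeping around the definition of $\alpha$-UET. The one place requiring a little care is the uniform estimate $\sup_t|\Psi_\delta(A^\epsilon)_t-A^\epsilon_t|\le\delta$, which rests on the continuity of $A^\epsilon$ (so that the inequality $|A^\epsilon_t-A^\epsilon_{\tau^\delta_{k-1}}|\le\delta$, valid strictly before $\tau^\delta_k$, also holds at $\tau^\delta_k$), and the trivial but crucial observation that the threshold $\eta/\delta$ in the rescaled probability exceeds the $\alpha$-UET constant $K_M$ once $\delta<\eta/K_M$.
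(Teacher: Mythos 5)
Your proof is correct and follows essentially the same route as the paper: rescale the error $\delta^{-1}(\Psi_\delta(A^\epsilon)-A^\epsilon)$ into an element of $\mathcal{B}([0,1],\bbR)$ using the stopping-time construction, then choose $\delta<\eta/K_M$ and invoke the $\alpha$-Uniformly Exponentially Tight bound. Your explicit domination of the $\beta$-H\"older norm by the $\alpha$-H\"older norm and the remark about continuity at the right endpoint $\tau_k^\delta$ are only slightly more detailed versions of steps the paper leaves implicit.
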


\begin{proof}
 The measurability requirement in Definition \ref{3.1} is satisfied by the fact that  $A^{\epsilon}\cdot X^{\epsilon}$ is an adapted process and 
 \[
 \sup_{0\le s < t \le 1} \frac{|x_t-x_s|}{|t-s|^\alpha} = \sup_{0\le s < t \le 1, s,t \in \bbQ} \frac{|x_t-x_s|}{|t-s|^\alpha}, \quad x \in C^\alpha ([0,1]).
  \]

To verify the remaining assertion, fix $M>0$ and $\eta>0$. Take $ K_M >0$ such that (\ref{1.1}) holds. 
For this $\eta$ and $K_M$, taking $\delta$ small enough ($\eta \delta^{-1} >K_M$). 
Note that  the definition of $\{ \tau_k^\delta(A^\epsilon) \}$ implies that $| A^\epsilon_t - \Psi_\delta(A^\epsilon)_t| \le \delta$, and so $ \delta^{-1} (A^\epsilon - \Psi_\delta(A^\epsilon) ) \in \mathcal{B} ([0,1],\bbR)$. 
Then one has that
\begin{align*}
\PROB{ \| (A^{\epsilon} - \Psi_{\delta} (A^\epsilon) ) \cdot X^{\epsilon} \|_{\beta \text{-Hld}} > \eta } 
& = \PROB{  \left| \left| \delta^{-1} (A^{\epsilon} -  \Psi_{\delta} (A^\epsilon)) \cdot X^{\epsilon} \right| \right|_{\beta \text{-Hld}} > \eta  \delta^{-1}} \\
& \le \PROB{ \left| \left| \delta^{-1} (A^{\epsilon} - \Psi_{\delta} (A^\epsilon)) \cdot X^{\epsilon} \right| \right|_{\beta \text{-Hld}} > K_M  }\\
& \le \sup_{U \in \mathcal{B} ([0,1],\bbR)} \PROB{  \|U \cdot X^{\epsilon} \|_{\beta\text{-Hld}}  > K_M},
\end{align*}
and so (\ref{1.1}) implies the claim.
\end{proof}

 \begin{definition}
     Let $X^\epsilon$ is a random function taking value a Banach space $E$. We say that $\{X^{\epsilon} \}_{\epsilon > 0}$ is exponentially tight if for all $M>0$, there exists a compact set $K_M$ on $E$ such that
     \begin{align*}
         \limsup_{\epsilon \searrow 0 } \epsilon \log  \PROB{  X^{\epsilon} \in K^c_M} \le -M.
     \end{align*}
 \end{definition}

\begin{proof}[Proof of Theorem \ref{main1}]
We will first prove that $\{ (A^{\epsilon},X^{\epsilon}, A^{\epsilon} \cdot X^{\epsilon} ) \}_{\epsilon}$ is exponentially tight on $C([0,1])\times C_0^{\alpha \text{-Hld}}([0,1]) \times C_0^{\beta \text{-Hld}}([0,1])$.
Since $C([0,1])\times C_0^{\alpha\text{-Hld}}([0,1])$ is Polish space, the assumption implies that $\{ (A^{\epsilon},X^{\epsilon})\}_{\epsilon}$ is exponentially tight on $C([0,1]) \times C_0^{\alpha \text{-Hld}}([0,1])$ (see Exercise 4.1.10 in \cite{DeZe}). 
Lemma \ref{G} implies that $(a,x) \mapsto ,G_{\delta} (a,x)$ is almost compact from $C ([0,1])\times C_0^{\alpha \text{-Hld}}([0,1]) $ into $C_0^{\beta \text{-Hld}}([0,1])$. 
Since $\{ X^{\epsilon} \}_{\epsilon}$ is $\alpha$-uniformly exponentially tight, \eqref{inte} and Lemma \ref{ega} imply that $\{ G_{\delta} (A^{\epsilon},X^{\epsilon}) \}_{\delta, \epsilon}$ is exponentially good approximation of $\{ A^{\epsilon} \cdot X^{\epsilon}  \}_{\epsilon}$ on $  C_0^{\beta \text{-Hld}}([0,1])$.
Therefore Theorem 7.1 in \cite{Ga08} implies that $\{ (A^{\epsilon},X^{\epsilon}, A^{\epsilon} \cdot X^{\epsilon} ) \}_{\epsilon}$ is exponentially tight on $C([0,1])\times C_0^{\alpha \text{-Hld}}([0,1]) \times C_0^{\beta \text{-Hld}}([0,1])$. 

Let $C([0,\infty))$ is the set of all continuous function with the metric 
\[
d_{\infty} (x,y) := \sum_{n=1}^\infty \frac{1}{2^n} ( 1 \wedge \sup_{t \in [0,n]} |x_t-y_t| ), \quad x,y \in C([0,\infty)), 
\]
and let $(D([0,\infty)), d_\infty)$ is the set of all cadlag function equipped with $d_\infty$.
 Let $F_1 : C([0,1]) \rightarrow C([0,\infty))$ as $ F_1(x)_t := x_{t \wedge 1}$ and let $F_2 : C([0,1]) \rightarrow (D([0,\infty)) ,d_{\infty})$ as $ F_2(x)_t := x_t 1_{ [0,1)}(t)$. Since $F_1$ and $F_2$ are continuous and injective respectively, the contraction principle implies that $\{(F_2(A^{\epsilon}),F_1(X^{\epsilon}) )\}$ satisfies the LDP on $( D([0,\infty)),d_\infty) \times C([0,\infty))$ with good rate function 
\begin{align*}
    I^{(1)} (\tilde{a},\tilde{x}) :=
    \begin{cases}
      I^{\#}(a,x), & \exists (a,x) \in C([0,1]) \times C([0,1]) \text{ s.t. } (\tilde{a}, \tilde{x} ) = (F_2(a),F_1(x)) ,\\
      \infty, & \text{otherwise}.
     \end{cases}
\end{align*}
Note that for a real valued adapted left continuous with right limits  process $H$ and a real valued semi-martingale $V$, we have that 
\begin{align}\label{stop}
H \cdot F_1(V) = F_2(H) \cdot F_1(V) = F_1 ( H \cdot V),
\end{align}
see Theorem 5.6 in \cite{LeG}, for example. Then we have that for all $t \in [0,\infty)$, $U \in \mathcal{S}$, and $K >0$,
\begin{align*}
    \PROB{\sup_{s \le t}|(U_{-}\cdot F_1(X^{\epsilon}))_s| > K } 
    & = \PROB{\sup_{s \le t} |F_1 (U_{-} \cdot X^{\epsilon})_s| > K }\\
    & \le \PROB{\sup_{s \le 1} |F_1 (U_{-} \cdot X^{\epsilon})_s| > K }\\
    & \le \PROB{ \| ( U_{-}\lvert_{[0,1]}) \cdot X^{\epsilon}\|_{\alpha \text{-Hld}} > K }\\
    & \le \sup_{\tilde{U} \in \mathcal{B}([0,1],\bbR)} \PROB{ \| \tilde{U} \cdot X^{\epsilon}\|_{\alpha \text{-Hld}} > K },
\end{align*}
where $U_{-}\lvert_{[0,1]}$ is the restriction of $U_{-}$ to $[0,1]$. Because $\{ X^{\epsilon} \}$ is the $\alpha$-Uniformly Exponentially tight, $\{F_1(X^{\epsilon})\}$ is a martingale  satisfying \eqref{UET}.  
Then Lemma \ref{Ga} implies that 
\[
\{ (F_2(A^{\epsilon} ) , F_1( X^{\epsilon} ), F_2(A^{\epsilon}) \cdot F_1(X^{\epsilon}) ) \}_{\epsilon} =\{ (F_2(A^{\epsilon} ) , F_1( X^{\epsilon} ), F_1 (A^{\epsilon}  \cdot X^{\epsilon}) ) \}_{\epsilon}
\]
 satisfies the LDP on $ ( D([0,\infty)) ,d_\infty) \times C([0,\infty)) \times C([0,\infty))$ with good rate function
\begin{align*}
I^{(2)}(a,x,\tilde{z}) 
& = \begin{cases} 
I^{\#}(a,x) , &  \tilde{z} = F_2(a) \cdot F_1(x), \ x \in \textbf{BV}  \\
\infty, & \text{otherwise}
\end{cases}
\\& = \begin{cases} 
I^{\#}(a,x) , &  \tilde{z} = F_1(a  \cdot x), \ x \in \textbf{BV}  \\
\infty, & \text{otherwise}.
\end{cases}
\end{align*}
By using Lemma 4.1.5 $(b)$ in \cite{DeZe}, it is straightforward to prove that $\{ (F_2(A^{\epsilon} ) , F_1( X^{\epsilon} ), F_1 (A^{\epsilon}  \cdot X^{\epsilon}) ) \}_{\epsilon}$  satisfies the LDP on $  ( F_2(C([0,1])) ,d_\infty) \times ( F_1(C([0,1]) ) ,d_\infty)  \times ( F_1(C([0,1])) ,d_\infty) $ with good rate function $I^{(2)}$.
 
  Let $\mathcal{E} := ( F_2(C([0,1])) ,d_\infty) \times ( F_1(C([0,1]) ) ,d_\infty)\times ( F_1(C([0,1]) ) ,d_\infty)$ and let $F_3 : \mathcal{E} \to  C([0,1]) \times C([0,1]) \times C([0,1])$ as 
  \[
  F_3 (\tilde{a},\tilde{x},\tilde{z})_t 
  := \begin{cases}
  (\tilde{a}_t,\tilde{x}_t,\tilde{z}_t) & t \in [0,1)\\
   (\lim_{t \nearrow 1}\tilde{a}_t,\tilde{x}_1,\tilde{z}_1) & t =1.
  \end{cases}
 \]  
 Because $F_3$ is continuous and injective, the contraction principle implies that $\{ (A^{\epsilon} ,X^{\epsilon}, A^{\epsilon} \cdot X^{\epsilon} ) \}_{\epsilon}$ satisfies the LDP on $C([0,1]) \times C([0,1]) \times C([0,1])$ with good rate function
\begin{align*}
I(a,x,z) = 
\begin{cases}
I^{\#}(a,x) & z = a \cdot x, \ x \in \textbf{BV} ,\\
\infty, & \text{otherwise}.
\end{cases}
\end{align*}
Therefore the inverse contraction principle (Theorem 4.2.4 in \cite{DeZe}) implies that 
$$\{(A^{\epsilon},X^{\epsilon}, A^{\epsilon} \cdot X^{\epsilon} ) \}_{\epsilon}$$
 satisfies the LDP on $C([0,1]) \times C_0^{\alpha\text{-Hld}}([0,1]) \times C_0^{\beta \text{-Hld}}([0,1])$ with good rate function $I$, and this is the claim.
\end{proof}

\begin{proof}[Proof of Proposition \ref{main2}]
To verify $A\cdot B^{\epsilon} \in C^{\alpha\text{-Hld}}([0,1],\bbR)$, we fix any  $(\mathcal{F}_t)$-adapted continuous processes $A$ on $[0,1]$ and $\alpha \in [1/3,1/2)$.
  For brevity, we assume that $\epsilon =1$. 
  Let $F_2 : C([0,\infty)) \to D([0,\infty))$ as $F_2(x)_t := x_t 1_{[0,1)} (t) + 1_{[1,\infty)} (t)$ and  $\tau_n := \inf\{t \ge 0 : |F_2(A_t)| > n \}$. 
Then we have that $\tau_n$ is a $(\mathcal{F}_t)$-stopping time,  $\tau_n \le \tau_{n+1}$ a.s. for all $n \in \bbN$, and $\tau_n \to \infty $ as $n \to \infty $ a.s.
One can also prove that $\sup_{t \in [0,1 \wedge \tau_n ]} |A_t| \le n$.
Let $\|x\|_{\alpha\text{-Hld},[0,c]} := |x_0|+ \sup_{0\le s<t\le c} \frac{|x_t-x_s|}{|t-s|^\alpha}$.
Then we have that
\begin{align*}
    \PROB{\|A\cdot B\|_{\alpha \text{-Hld},[0,1]} < \infty} 
     &\ge   \PROB{ \cap_{n=1}^\infty \{ \|A\cdot B\|_{\alpha \text{-Hld},[0,1\wedge \tau_n]}  < \infty \} } \\
    & =  \lim_{n \to \infty }   \PROB{  \|A\cdot B\|_{\alpha \text{-Hld},[0,1\wedge \tau_n]}  < \infty  },
\end{align*}
and so it is sufficient to prove that for all $n \in \bbN$,
\begin{align}\label{ab}
  \PROB{  \|A\cdot B\|_{\alpha \text{-Hld},[0,1\wedge \tau_n]}  < \infty  }=1.
\end{align}
Let $\tilde{A}^{(n)}_t := A_t1_{[0,1\wedge \tau_n]} (t)+ 1_{(1\wedge \tau_n, \infty)} (t)$, then  $\|A\cdot B \|_{\alpha,[0,1\wedge \tau_n] } = \|\tilde{A}^{(n)}\cdot B \|_{\alpha,[0,1\wedge \tau_n] } $.
Since 
\[
\braket{\tilde{A}^{(n)} \cdot B}_t = \int_0^t (\tilde{A}^{(n)} )^2_r \D r \to \infty, \quad t \to \infty,
\]
and $\tilde{A}^{(n)} \cdot B$ is a local continuous martingale, the Dambis-Dubins-Schwarz's Theorem implies that there exists a Brownian motion $\tilde{B}$ such that
\begin{equation}\label{changea}
(\tilde{A}^{(n)} \cdot B)_t = \tilde{B}_{\braket{\tilde{A}^{(n)} \cdot B}_t }, \quad t \in [0,\infty).
\end{equation}
We now restrict $\tilde{A}^{(n)} \cdot B$ on $[0,1]$.
Since  $\sup_{t \in [0,1\wedge \tau_n] } |\tilde{A}^{(n)}_t| = \sup_{t \in [0,1\wedge \tau_n] } |{A}_t|  \le n$, one has that $ \braket{\tilde{A}^{(n)} \cdot B}_t \le n^2 $ for $t\in[0,1]$, and this implies that for $0 \le s<t \le 1$,
\begin{align*}
\left |\tilde{B}_ {\braket{\tilde{A}^{(n)} \cdot B}_t }  - \tilde{B}_{ \braket{\tilde{A}^{(n)} \cdot B}_s } \right | 
& \le \|\tilde{B}\|_{\alpha \text{-Hld},[0,n^2]} \left | \braket{\tilde{A}^{(n)} \cdot B}_t  -  \braket{\tilde{A}^{(n)} \cdot B}_s  \right|^{\alpha}\\
& \le n^{2\alpha} \|\tilde{B}\|_{\alpha \text{-Hld},[0,n^2]} \left |  t-s  \right|^{\alpha}.
\end{align*}
Combined with (\ref{changea}), we have 
\[
\|A\cdot B\|_{\alpha \text{-Hld},[0,1\wedge\tau_n]} = \|\tilde{A}^{(n)} \cdot B\|_{\alpha \text{-Hld},[0,1]} \le n^{2\alpha} \|\tilde{B}\|_{\alpha \text{-Hld},[0,n^2]}, \quad \text{a.s.}
\]  
and this implies that for all $n$, we have \eqref{ab}. Hence we conclude that $A\cdot B^{\epsilon} \in C^{\alpha\text{-Hld}}([0,1],\bbR)$,

To verify the $\alpha$-Uniformly Exponentially Tightness of $\{ B^\epsilon \}$,  we fix $M>0$ and  $U \in \mathcal{B}([0,1],\bbR)$. To regard $U$ as a process on $[0,\infty)$, define $U_t = 1 $, $t \in (1,\infty)$. Then we have 
\[
\braket{U \cdot B}_t = \int_0^t U^2_r \D r \to \infty, \quad t \to \infty.
\]
Since $U \cdot B$ is a continuous martingale, the Dambis-Dubins-Schwarz's Theorem implies that there exists a Brownian motion $\bar{B}'$ such that
\begin{equation}\label{change}
(U \cdot B)_t = \bar{B}'_{\braket{U\cdot B}_t }, \quad t \in [0,\infty).
\end{equation}
We now restrict $U\cdot B$ on $[0,1]$. Since  $\sup_{t \in [0,1] } |U_t| \le 1$, one has that $\braket{U \cdot B}_t \le t$, and this implies that for $0 \le s<t \le 1$,
\begin{align*}
\left |\bar{B}'_ {\braket{U\cdot B}_t }  - \bar{B}'_{ \braket{U\cdot B}_s } \right | 
& \le \|\bar{B}'\|_{\alpha \text{-Hld}} \left | \braket{U \cdot B}_t  -  \braket{U \cdot B}_s  \right|^{\alpha}\\
& \le \|\bar{B}'\|_{\alpha \text{-Hld}} \left |  t-s  \right|^{\alpha}.
\end{align*}
Combined with (\ref{change}), we have 
\[
\|U \cdot B\|_{\alpha \text{-Hld}} \le \|\bar{B}'\|_{\alpha \text{-Hld}}.
\] 
Since $\tilde{B}$ is an one dimensional Brownian motion, $\|\tilde{B}\|_{\alpha\text{-Hld}}$ has a Gaussian tail (Corollary 13.14 in \cite{FV10}).
Therefore, there exists $c>0$ such that for all $K>0$, 
\begin{align*}
\PROB{  \left| \left| U\cdot B^{\epsilon} \right| \right|_{\alpha \text{-Hld}} >  K} 
& = \PROB{ \left| \left| U \cdot B \right| \right|_{\alpha \text{-Hld}} > \epsilon^{-1/2} K  } \\
& \le \PROB{ \|\bar{B}'\|_{\alpha\text{-Hld}} > \epsilon^{-1/2} K } \le c^{-1} \exp{(-c\epsilon^{-1}K^2)}.
\end{align*}
This implies that
\[
\limsup_{\epsilon\searrow 0} \epsilon \log \sup_{U \in \mathcal{B}([0,1],\bbR) }
\PROB{ \left| \left| U \cdot B^{\epsilon} \right| \right|_{\alpha \text{-Hld}} > K}  \le -cK^2,
\]
and so take $K_M$ large enough ($c K^2_M >M$), then we conclude that 
\[
\limsup_{\epsilon \searrow 0} \epsilon \log \sup_{U \in \mathcal{B}([0,1],\bbR) } 
\PROB{ \left| \left| U \cdot B^{\epsilon} \right| \right|_{\alpha \text{-Hld}} > K_M }  \le -M,
\]
and this is the claim.

It remains to verify $(ii)$.
The proof of  $\bar{A} \cdot \bar{B}^\epsilon \in C^{\alpha\text{-Hld}}$  follows from a simple modification of $(i)$ and so we will focus on  $\alpha$-Uniformly Exponentially Tightness of $\{\bar{B}^\epsilon \}$.
   We fix $U \in \mathcal{B} ([0,1] ,\bbR)$ (note that $U$ is an $(\mathcal{F}^\epsilon)$-adapted process). 
Let $\tilde{U}_t := U_t 1_{[0,1]}(t) + 1_{(1,\infty) } (t)$. Then we have that for all $\epsilon >0$,
\[
\braket{\tilde{U} \cdot \bar{B}^\epsilon}_t = \int_0^t (\tilde{U} )^2_r \D \braket{\bar{B}^\epsilon}_r =  \int_0^t (\tilde{U} )^2_r \D \epsilon r   \to \infty, \quad t \to \infty,
\]
and so for each $\epsilon >0$, there exists a Brownian motion $\tilde{B}^{(\epsilon)} $ such that 
\[
(\tilde{U} \cdot \bar{B}^\epsilon)_t = \tilde{B}^{(\epsilon)}_{\braket{\tilde{U} \cdot \bar{B}^\epsilon}_t} =  \tilde{B}^{(\epsilon)}_{ \epsilon \left(  \int_0^t (\tilde{U} )^2_r \D r \right)}, \quad t \in [0,\infty).
\]
Since $\sup_{t \in [0,1]} | \int_0^t (\tilde{U} )^2_r \D r | \le  1 $, we have that 
\begin{align*}
\left|  \tilde{B}^{(\epsilon)}_{ \epsilon \left(  \int_0^t (\tilde{U} )^2_r \D r \right)} -  \tilde{B}^{(\epsilon)}_{ \epsilon \left(  \int_0^s (\tilde{U} )^2_r \D r \right)} \right|
& \le \| \tilde{B}^{(\epsilon)}_{\epsilon \cdot}  \|_{\alpha \text{-Hld},[0,1]} \left |  \int_0^t (\tilde{U} )^2_r \D r -  \int_0^s (\tilde{U} )^2_r \D r \right|^\alpha\\
& \le \| \tilde{B}^{(\epsilon)}_{\epsilon \cdot}  \|_{\alpha \text{-Hld} ,[0,1]} \left | t-s  \right|^\alpha,
\end{align*}
and so one has 
\[
\| U \cdot \bar{B}^\epsilon \|_{\alpha ,[0,1]} \le \| \tilde{B}^{(\epsilon)}_{\epsilon \cdot}  \|_{\alpha \text{-Hld} ,[0,1]}, \quad \text{a.s.}
\]
Since $ \tilde{B}^{(\epsilon)}$ is a Brownian motion, one can prove that 
\[
\EXP{|\tilde{B}^{(\epsilon)}_{\epsilon t} - \tilde{B}^{(\epsilon)}_{\epsilon s}  |^p}^{1/p} \le \sqrt{\epsilon} \sqrt{p} |t-s|^{1/2}.
\]
Then the argument of Gaussian tails in Lemma A.17 in \cite{FV10}, one can prove that there exists $c>0$ ($\epsilon$-uniform) such that 
\[
\PROB{ \| \tilde{B}^{(\epsilon)}_{\epsilon \cdot}  \|_{\alpha \text{-Hld} ,[0,1]} \ge K } \le c \exp{\left ( -\frac{K^2}{8e c_{\alpha} \epsilon} \right)},
\]
and so 
\begin{align*}
\PROB{  \left| \left| U\cdot \bar{B}^{\epsilon} \right| \right|_{\alpha \text{-Hld}, [0,1]} >  K} 
& \le \PROB{ \|\tilde{B}^{(\epsilon)}_{\epsilon \cdot} \|_{\alpha\text{-Hld},[0,1]} > \ K } \le c \exp{\left ( -\frac{K^2}{8e c_{\alpha} \epsilon} \right)},
\end{align*}
and  we have the claim. 
\end{proof}

\subsection{Proof of Theorem \ref{main4} and \ref{main5}}\label{sec32}
\begin{proof}[Proof of Theorem \ref{main4}]
We fix $\alpha \in [1/3,1/2)$ and $1/2 > \alpha' > \alpha $ such that $\{(A^{\epsilon}, \tilde{A}^{\epsilon},X^{\epsilon} ) \}_{\epsilon }$ satisfies the LDP on $C([0,1]) \times C([0,1]) \times C_0^{\alpha'\text{-Hld}}([0,1])$ with good rate function $J^{\#}$. 
Take $\alpha'' $ with $\alpha ' > \alpha'' > \alpha$. 
Note that $A^\epsilon \cdot X^\epsilon \in C^{\alpha'\text{-Hld}}([0,1])$ and $\{ X^{\epsilon} \}_{\epsilon }$ is $\alpha'$-Uniformly Exponentially Tight by Proposition \ref{main2} $(i)$. 
Then Theorem \ref{main1} implies that 
$\{( A^{\epsilon}, \tilde{A}^{\epsilon}, X^{\epsilon}, A^{\epsilon} \cdot X^{\epsilon}  ) \}_{\epsilon}$ satisfies the LDP on $ C([0,1]) \times C([0,1]) \times C_0^{\alpha'\text{-Hld}}([0,1]) \times C_0^{\alpha'' \text{-Hld}}([0,1])$ with good rate function 
\[
    J^{(1)} (a,\tilde{a}, x, z) 
    := \begin{cases}
      J^{\#}(a,\tilde{a},x) &  z =   a \cdot x, \ x \in \textbf{BV}\\
      \infty, & \text{otherwise}.
      \end{cases}
\]
Since $x \mapsto \int_0^{\cdot} x_r \D r $ is continuous from $C([0,1])$ to $C^{1\text{-Hld}}([0,1])$, the contraction principle implies that $\{ Z^{\epsilon} := (A^{\epsilon} \cdot X^{\epsilon}, \tilde{A}^{\epsilon} \cdot \Lambda)  \}_{\epsilon}$ satisfies the LDP on $C_0^{\alpha'' \text{-Hld}} ([0,1])\times C^{1 \text{-Hld}} ([0,1])$ with good rate function 
\[
    J^{(2)} (z^{(1)},z^{(2)}) 
    := \inf \left\{ J^{\#}(a,\tilde{a},x) :  (z^{(1)},z^{(2)}) = ( a \cdot x, \tilde{a} \cdot \Lambda ), \ x \in \textbf{BV}  \right\}.
\]
We define $F: C_0^{\alpha'' \text{-Hld}} ([0,1], \mathbb{R}) \times C^{1\text{-Hld}} ([0,1],\mathbb{R}) \rightarrow G \Omega^{\alpha\text{-Hld}}([0,1], \mathbb{R}^2)$ as \eqref{defF}:
\[
    F(z)_{st} : =  (1,z_{st}, \mathbf{z}_{st}),\quad z \in C_0^{\alpha ''\text{-Hld}} (\mathbb{R}) \times C^{1\text{-Hld}} (\mathbb{R}).
\] 
We first prove $F(z) \in G\Omega^{\alpha\text{-Hld}}([0,1],\mathbb{R}^2)$. 
It is straightforward to show that $F(z) = (1,z,\mathbf{z})$ has the Chen's relation: for $s\le u \le  t$,
\begin{align*}
    z_{st} = z_{su} + z_{ut}, \quad \mathbf{z}_{st} = \mathbf{z}_{su} + \mathbf{z}_{ut} + z_{su} \otimes z_{ut}.
\end{align*}
We also have that 
\begin{align*}
    \sup_{0\le s < t \le 1} \frac{|z_{st}|}{|t-s|^{\alpha}} < \infty, \quad \sup_{0\le s < t \le 1} \frac{|\mathbf{z}_{st}|}{|t-s|^{2 \alpha}} < \infty,
\end{align*}
by the estimate of Young integral (Theorem 6.8 in \cite{FV10})
\begin{align}\label{Young}
        \left |  \int_s^t ( z_r^{(i)} -z_s^{(i)} )  \D z_r^{(j)}  \right| 
    \lesssim \|z^{(i)}\|_{\alpha^{(i)} \text{-Hld}} \|z^{(j)}\|_{\alpha^{(j)} \text{-Hld}} |t-s|^{2\alpha},
\end{align}
where $\alpha^{(i)} = \alpha''$ if $i=1$, otherwise $\alpha^{(i)} = 1$.
By using Theorem 5.25 in \cite{FV10} and the estimate of Young integral, one can prove that $F(z) \in G\Omega^{\alpha\text{-Hld}}([0,1],\mathbb{R}^2)$. 
Now we will show that $F$ is continuous. 
Assume that $z(n) \to z$ in $C_0^{\alpha'' \text{-Hld}} ([0,1],\mathbb{R}) \times C^{1\text{-Hld}} ([0,1],\mathbb{R})$. 
It is sufficient to consider the continuity of $\mathbf{z}^{(ij)}$. It is obvious when $i=j=1$. In the other case, by using (\ref{Young}), we have that
 \begin{align*}
      |\mathbf{z}(n)^{(ij)}_{st} - \mathbf{z}^{(ij)}_{st}|
     & \le \left|\int_s^t \{z(n)_r^{(i)} - z(n)_s^{(i)} - z^{(i)}_r + z^{(i)}_s  \}  \D z(n)_r^{(j)} \right| \\
     & \quad + \left|\int_s^t \{ z^{(i)}_r - z^{(i)}_s  \}  \D \{ z(n)_r^{(j)} - z^{(j)}_r \}  \right| \\
     & \lesssim \|z(n)^{(i)}-z^{(i)}\|_{\alpha^{(i)} \text{-Hld}} \|z(n)^{(j)}\|_{\alpha^{(j)} \text{-Hld}} |t-s|^{2\alpha} \\
     & \quad + \|z^{(i)}\|_{\alpha^{(i)} \text{-Hld}} \|z(n)^{(j)} - z^{(j)}\|_{\alpha^{(j)} \text{-Hld}} |t-s|^{2\alpha},
 \end{align*}
 and so we have that $F$ is continuous.
 Hence the contraction principle implies that $\{\mathbb{Z}^{\epsilon} = F(Z^{\epsilon}) \}_{\epsilon }$ satisfies the LDP on $G\Omega^{\alpha\text{-Hld}}([0,1],\mathbb{R}^2)$ with good rate function
 \[
    J^{(3)} (\tilde{\mathbf{z}}) 
    := \inf \left\{ J^{\#}(a,\tilde{a},x) :  \tilde{\mathbf{z}} = F(a \cdot x, \tilde{a} \cdot \Lambda) , \ x \in \textbf{BV} \right\},
\]
Because the solution map $\Phi: G\Omega^{\alpha\text{-Hld}}([0,1],\mathbb{R}^2) \rightarrow C^{\alpha \text{-Hld}} ([0,1]) $ is continuous, the contraction principle implies that $\{ Y^{\epsilon} = \Phi \circ F(Z^{\epsilon})  \}_{\epsilon}$ satisfies the LDP on $C^{\alpha\text{-Hld}}([0,1])$ with good rate function
\[
J (y) := 
\inf \left\{ J^{\#}(a,\tilde{a},x) :  y = \Phi \circ F(a \cdot x , \tilde{a} \cdot \Lambda ), \ x \in \textbf{BV}   \right\},
\]
and so Theorem \ref{thm:33} implies the claim.
\end{proof}



\begin{lemma}\label{BET}
We fix $\alpha \in (1/3,1/2)$, $\gamma \in(0,1)$, and let $X$ and $\hat{X}$ be a stochastic process defined as \eqref{defX} respectively.
Then  $\left \{ \left ( f(\hat{X}^{\epsilon}, \cdot  ),f^2(\hat{X}^{\epsilon}, \cdot  ), X^{\epsilon} \right) \right\}_\epsilon$ satisfies the LDP on $C([0,1]) \times C([0,1])\times C_0^{\alpha \text{-Hld}}([0,1])$ with good rate function
\begin{align*}
    \tilde{J}^{\#} (a,\tilde{a},x)  = \inf \left \{ \frac{1}{2} \| (w,w^\perp)\|^2_{\mathcal{H}} :   (a,\tilde{a})  = F_f \circ  \mathbb{K}(w,w^\perp) , \ (w,w^\perp) \in \mathcal{H} \right \}.
\end{align*}
\end{lemma}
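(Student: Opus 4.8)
The plan is to deduce this LDP from Schilder's theorem by transporting it through a chain of continuous maps, with a single genuinely probabilistic step in the middle to dispose of the nonlinearity of the dynamics of $A$.

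First, I would record Schilder's theorem in H\"older topology: fixing $\alpha'\in(\alpha,1/2)$, the scaled planar Brownian motion $(\epsilon^{1/2}W,\epsilon^{1/2}W^{\perp})$ satisfies the LDP on $C_0^{\alpha'\text{-Hld}}([0,1],\bbR^2)$ with good rate function $\frac{1}{2}\|(w,w^{\perp})\|_{\mathcal{H}}^2$; this follows from the Gaussian tail of the H\"older norm of Brownian motion (Corollary~13.14 in \cite{FV10}) together with the abstract Wiener space LDP, in the spirit of the tail estimates used in the proof of Proposition~\ref{main2}. I would also note that the Cameron--Martin space embeds continuously into $C_0^{\alpha'\text{-Hld}}([0,1])$ (Cauchy--Schwarz bounds the $\frac{1}{2}$-H\"older norm by $\|\cdot\|_{\mathcal{H}}$), that the inclusion $C_0^{\alpha'\text{-Hld}}\hookrightarrow C_0^{\alpha\text{-Hld}}$ is continuous since $\alpha<\alpha'$, and that $\mathcal{K}\colon C^{\alpha\text{-Hld}}\to C^{\gamma\text{-Hld}}$, $\Psi\colon C_0^{\gamma\text{-Hld}}\to C([0,1])$, and $x\mapsto f(x,\cdot)$ on $C([0,1])$ are all continuous by hypothesis; so every map appearing below is well-defined and continuous on the relevant space.

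Next --- and this is the step I expect to be the main obstacle --- I would establish the joint LDP for $(\hat X^{\epsilon},X^{\epsilon})$ on $C([0,1])\times C_0^{\alpha\text{-Hld}}([0,1])$ with good rate function $\inf\{\frac{1}{2}\|(w,w^{\perp})\|_{\mathcal{H}}^2 : (\hat x,x)=\mathbb{K}(w,w^{\perp})\}$, where $\mathbb{K}(w,w^{\perp})=(\Psi\mathcal{K}(a(A_0)w^{(1)}),\,\rho w^{(1)}+\sqrt{1-\rho^2}w^{(2)})$. The coordinate $X^{\epsilon}=\rho(\epsilon^{1/2}W)+\sqrt{1-\rho^2}(\epsilon^{1/2}W^{\perp})$ is a continuous linear image of the Schilder pair, so the contraction principle disposes of it. The real work is in the $\hat X^{\epsilon}$-coordinate, where the $A$-dependent diffusion coefficient must be shown to freeze at $a(A_0)$ in the large-deviation limit, so that the skeleton of the (scaled) $\mathcal{K}A$ is $\mathcal{K}(a(A_0)w^{(1)})$ and hence that of $\hat X^{\epsilon}$ is $\Psi\mathcal{K}(a(A_0)w^{(1)})$. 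Writing the relevant scaled increment of $A$ as $\epsilon^{1/2}\int_0^{\cdot}b(A_s)\,\D s+\int_0^{\cdot}a(A_s)\,\D(\epsilon^{1/2}W)_s$, the drift term is uniformly $O(\epsilon^{1/2})$ because $b$ is bounded, while the martingale term $\int_0^{\cdot}(a(A_s)-a(A_0))\,\D(\epsilon^{1/2}W)_s$ should be shown negligible in $C^{\alpha'\text{-Hld}}$ via Dambis--Dubins--Schwarz and the Gaussian-tail bound for the H\"older norm of Brownian motion, exactly as in the proof of Proposition~\ref{main2}. Transferring the LDP along this comparison and then applying the contraction principle to the continuous map $w^{(1)}\mapsto\Psi\mathcal{K}(a(A_0)w^{(1)})$ would yield the joint LDP; keeping exact track of the shared Cameron--Martin coordinate $w^{(1)}$ (the correlation between the two coordinates of $\mathbb{K}$), and making the freezing estimate rigorous in the H\"older norm, are the delicate points.

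Finally, I would compose with the map $(v,x)\mapsto(f(v,\cdot),f(v,\cdot)^2,x)$, which is continuous from $C([0,1])\times C_0^{\alpha\text{-Hld}}([0,1])$ into $C([0,1])\times C([0,1])\times C_0^{\alpha\text{-Hld}}([0,1])$ (squaring is continuous on $C([0,1])$, and $x\mapsto f(x,\cdot)$ is continuous by assumption), and invoke the contraction principle once more. Since the contraction principle preserves goodness of rate functions, $\{(f(\hat X^{\epsilon},\cdot),f^2(\hat X^{\epsilon},\cdot),X^{\epsilon})\}_{\epsilon}$ satisfies the LDP with good rate function $\inf\{\frac{1}{2}\|(w,w^{\perp})\|_{\mathcal{H}}^2 : (a,\tilde a)=F_f\circ\mathbb{K}(w,w^{\perp})\}=\tilde J^{\#}$, which is the claim.
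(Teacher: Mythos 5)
Your overall skeleton (Schilder in H\"older topology, then a chain of contractions through $\mathcal{K}$, $\Psi$ and finally $(v,x)\mapsto(f(v,\cdot),f^2(v,\cdot),x)$) is exactly the paper's route: the paper proves the lemma purely by the inverse contraction principle (Gaussian tails of the H\"older norm) followed by repeated applications of the contraction principle, using the continuity of $\mathcal{K}$ from Theorem~1 of \cite{Fu23} and the continuity assumptions on $\Psi$ and $f$. The first and last steps of your proposal are fine and coincide with the paper's.

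The genuine gap is in the step you yourself flag as the main obstacle: the ``freezing'' of the diffusion coefficient at $a(A_0)$ via exponential equivalence. In the small-noise regime the process $A$ is \emph{not} rescaled in time, so $a(A_s)-a(A_0)$ is of order one on all of $[0,1]$, and the error term $\int_0^{\cdot}(a(A_s)-a(A_0))\,\D(\epsilon^{1/2}W)_s=\epsilon^{1/2}\int_0^{\cdot}(a(A_s)-a(A_0))\,\D W_s$ is of the \emph{same} order $\epsilon^{1/2}$ as the leading term $a(A_0)\epsilon^{1/2}W$. The Dambis--Dubins--Schwarz/Gaussian-tail bound you invoke only gives $\PROB{\|\cdot\|_{\alpha'\text{-Hld}}>\eta}\le C\exp(-c\eta^2\epsilon^{-1})$, hence $\limsup_{\epsilon}\epsilon\log\PROB{\|\cdot\|_{\alpha'\text{-Hld}}>\eta}\le-c\eta^2$, which is finite rather than $-\infty$; so exponential equivalence at speed $\epsilon^{-1}$ fails, and indeed this term genuinely contributes to the rate function (for non-constant $a$ the deviations of $\epsilon^{1/2}(A-A_0)$ are governed by the global behaviour of $a$, not by $a(A_0)$). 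Freezing at $a(A_0)$ is a short-time phenomenon, and in the paper it is justified only in the short-time regime (Theorem \ref{short}), where the coefficients $a(A_0+\epsilon^{-\mu}y)$ converge to $a(A_0)$ and the argument runs through equicontinuous RDE solution maps and the extended contraction principle of \cite{Pu} --- not through an exponential-equivalence estimate. By contrast, the paper's proof of this lemma never performs any freezing: it applies the contraction principle directly to $\epsilon^{1/2}(\mathcal{K}W,\rho W+\sqrt{1-\rho^2}W^{\perp})$, i.e.\ the volatility input is treated as (a continuous image under $\mathbb{K}$ of) the scaled Gaussian path, so no step of your second paragraph appears there. As written, your middle step would fail, and the argument needs to be replaced by the paper's direct contraction (under the interpretation of the scaled model used there) or by a genuinely different treatment of the law of $A$.
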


\begin{proof}
It is well-known that $\{ \epsilon^{1/2} (W,W^{\perp})\}_{\epsilon}$ satisfies the LDP on $C([0,1]) \times C([0,1])$ with good rate function 
\begin{align*}
I^{(0)} (w,w^\perp) := \begin{cases}
\frac{1}{2} \| (w,w^\perp) \|_{\mathcal{H}}^2, & (w,w^\perp ) \in \mathcal{H},\\
\infty, & \text{otherwise}.
\end{cases}
\end{align*}
Since $\alpha \in (0,1/2)$ and $\|(W,W^{\perp})\|_{\alpha \text{-Hld}}$ has a Gaussian tails, the inverse contraction principle (see Theorem~4.2.4 in \cite{DeZe}) implies that $\{ \epsilon^{1/2} (W,W^{\perp})\}_{\epsilon}$ satisfies the LDP on $C_0^{\alpha \text{-Hld}}([0,1]) \times C_0^{\alpha \text{-Hld}}([0,1])$ with good rate function $ I^{(0)}$ (here we use the argument of Proposition 13.43 in \cite{FV10}). 
By Theorem~1 in \cite{Fu23}, the map $f \mapsto \mathcal{K} f$ is continuous from $C^{\alpha \text{-Hld}}([0,1])$ to $C^{\gamma \text{-Hld}}([0,1])$. 
Then the contraction principle implies that $\left\{ \epsilon^{1/2} (\hat{X},X) = \epsilon^{1/2} (\mathcal{K}W, \rho W + \sqrt{1-\rho^2}W^{\perp} ) \right \}_{\epsilon}$ satisfies the LDP on $C([0,1]) \times C^{\alpha \text{-Hld}}_0([0,1])$ with good rate function 
\[
\tilde{J}^{(1)} (\hat{x},x)  = \inf \left \{ \frac{1}{2} \| (w,w^\perp)\|_{\mathcal{H}}^2 \middle| w \in \mathcal{H}, (\hat{x},x) = \mathbb{K} (w,w^\perp), \ (w,w^\perp) \in \mathcal{H} \right\}.
\]
Hence the contraction principle again, $$\left \{ \left ( f(\hat{X}^{\epsilon}, \cdot  ),f^2(\hat{X}^{\epsilon}, \cdot  ), X^{\epsilon} \right) \right\}_{\epsilon}$$ satisfies the LDP on $C([0,1]) \times C([0,1]) \times C^{\alpha \text{-Hld}}_0([0,1])$ with good rate function
\[
    \tilde{J}^{\#} (a,\tilde{a},x)  = \inf \left \{ \frac{1}{2} \| (w,w^\perp)\|^2_{\mathcal{H}} :   (a,\tilde{a})  = F_f \circ  \mathbb{K}(w,w^\perp) , \ (w,w^\perp) \in \mathcal{H} \right \}.
\] 
and this is the claim.
\end{proof}

\begin{proof}[Proof of Theorem \ref{main5}]
Since $\sigma \in C_b^4$, the coefficient of drift term $\frac{1}{2} ( \sigma^{2} + \sigma \sigma' )$ in (\ref{esrSABR}) is in $C_b^3$.
Then by Lemma \ref{BET} and Proposition \ref{main2} $(i)$, one can apply Theorem \ref{main4} by taking 
$$  ( A^{\epsilon} ,\tilde{A}^{\epsilon}, X^{\epsilon} )  =   ( f(\hat{X}^{\epsilon}, \cdot  ) ,f^2(\hat{X}^{\epsilon}, \cdot  ), X^{\epsilon} )$$
 and the rate function is given by
\begin{align*}
      \tilde{J} (y) 
      &:= \inf \left \{ \tilde{J}^{\#} (a,\tilde{a},x) :  y = \Phi \circ F \left (  a \cdot x, \tilde{a} \cdot \Lambda  \right) , \ x \in \textbf{BV},  \right \}\\
      & = \inf \left \{ \frac{1}{2} \| (w,w^\perp) \|_{\mathcal{H}}^2 :    y = \Phi \circ F \circ  F_f \circ \mathbb{K} (w,w^\perp) , \ (w,w^\perp) \in \mathcal{H} \right\},
\end{align*}
and this is the claim.
\end{proof}

\subsection{Proof of Theorem \ref{short}}\label{shortproof}
\begin{proof}
Since $\{ \epsilon^{\mu} (W_{\epsilon \cdot} ,W^{\perp}_{\epsilon \cdot })\}$ and $\{ \epsilon^{\mu +1/2} (W,W^\perp) \}$ are the same law, one can show that $\{ \epsilon^{\mu} (W_{\epsilon \cdot} ,W^{\perp}_{\epsilon \cdot })\}$ satisfies the LDP on $C([0,1] ) \times C([0,1] ) $ with speed $\epsilon^{2\mu+1}$ with good rate function
\begin{align*}
I^{(0)} (w,w^\perp) := \begin{cases}
\frac{1}{2} \| (w,w^\perp) \|_{\mathcal{H}}^2, & (w,w^\perp ) \in \mathcal{H},\\
\infty, & \text{otherwise}.
\end{cases}
\end{align*}
Since $\| (W_{\epsilon \cdot}, W^\perp_{\epsilon \cdot}) \|_{\alpha \text{-Hld}}$ has a Gaussian tails (see the proof of Proposition \ref{main2} $(ii)$), one can also prove that $\{ \epsilon^{\mu} (W_{\epsilon \cdot} ,W^{\perp}_{\epsilon \cdot })\}$ satisfies the LDP on $C_0^{\alpha\text{-Hld}} ([0,1] ) \times C_0^{\alpha\text{-Hld}} ([0,1] )  $ with speed $\epsilon^{2\mu+1}$ with good rate function $I^{(0)}$.
Let $(W^\epsilon, (W^\perp)^\epsilon ) := ( \epsilon^\mu W_{\epsilon \cdot}, \epsilon^\mu W^\perp_{\epsilon \cdot})$ and $X^\epsilon := \rho W^\epsilon + \sqrt{1-\rho^2} (W^\perp)^\epsilon$.
Then the contraction principle implies that $\{ (X^\epsilon , F( W^\epsilon , \Lambda) ) \}$ satisfies the LDP on $C_0^{\alpha\text{-Hld}} (\mathbb{R}) \times G\Omega^{\alpha\text{-Hld}} (\mathbb{R}^2)  $ with speed $\epsilon^{2\mu+1}$ with good rate function 
\begin{align*}
I^{(1)} (x,X) := \inf \{ \frac{1}{2} \| (w,w^\perp )\|_{\mathcal{H}}^2 : x = \rho  w + \sqrt{1 - \rho^2} w^\perp , \ X = F (w,\Lambda), \ (w,w^\perp) \in \mathcal{H}  \},
\end{align*}
where $F$ is the Young pair, see \eqref{defF}.
Let $\tilde{A}^\epsilon_t := \epsilon^{\mu}( A_{\epsilon t} - A_0 )$.
By using the change of variable for stochastic integrals and Riemann-integrals, one can prove that $\tilde{A}^\epsilon$ is the solution of the following It\^{o} SDE:
\begin{align*}
\tilde{A}^\epsilon_t = \int^t_0 \tilde{a}^\epsilon (\tilde{A}^\epsilon_u) \D ( \epsilon^\mu W_{\epsilon u} )+ \int_0^t \tilde{b}^\epsilon (\tilde{A}^\epsilon_u) \D u,
\end{align*}
where 
\begin{align*}
\tilde{a}^\epsilon (y) := a (A_0 + \epsilon^{-\mu} y) , \quad \tilde{b}^\epsilon(y) := \epsilon^{1+\mu} b(A_0 + \epsilon^{-\mu} y).
\end{align*}
Then one can show that $\tilde{A}^\epsilon$ is the solution of the following Stratonovich SDE,
\begin{align*}
\tilde{A}^\epsilon_t = \int^t_0 \tilde{a}^\epsilon (\tilde{A}^\epsilon_u) \circ  \D ( \epsilon^\mu W_{\epsilon u} ) 
- \frac{1}{2} \int^t_0  \tilde{a}^\epsilon  (\tilde{a}^\epsilon)'  (\tilde{A}^\epsilon_u) \D ( \epsilon^{2\mu+1} u ) 
+ \int_0^t \tilde{b}^\epsilon (\tilde{A}^\epsilon_u) \D u,
\end{align*}
and by Theorem \ref{thm:33}, $\tilde{A}^\epsilon $ is the solution of RDE with a coefficient $( \tilde{a}^\epsilon, - \frac{\epsilon^{2\mu+1}}{2} \tilde{a}^\epsilon  (\tilde{a}^\epsilon)' +\tilde{b}^\epsilon   )$. 

Let $\tilde{\Phi}_\epsilon$ be the solution map of RDE with the coefficient $( \tilde{a}^\epsilon, - \frac{\epsilon^{2\mu+1}}{2} \tilde{a}^\epsilon  (\tilde{a}^\epsilon)' +\tilde{b}^\epsilon   )$ i.e. $\tilde{A}^\epsilon = \tilde{\Phi}_\epsilon \circ F (W,\Lambda)$.
Note that 
\[
 \| \tilde{a}^\epsilon \|_{C^3_b} \lesssim \|a\|_{C^3_b}, \quad \| -  ( \epsilon^{2\mu+1}/2 ) \tilde{a}^\epsilon  (\tilde{a}^\epsilon)' +  \tilde{b}^\epsilon/2 \|_{C^3_b} \lesssim ( \|a\|_{C^4_b}+\|b\|_{C^3_b},
\]
where the proportional constant does not depend on $\epsilon$.
Let $\tilde{\Phi_0} $ be the solution map of RDE with the coefficient $(a(A_0), 0)$.
Since the upper bound of $\|\cdot \|_{C^3_b}$ norm for the coefficient $( \tilde{a}^\epsilon, - \frac{\epsilon^{2\mu+1}}{2} \tilde{a}^\epsilon  (\tilde{a}^\epsilon)' +\tilde{b}^\epsilon   )$ is $\epsilon$-uniform,   we can show that $\{\tilde{\Phi}_\epsilon \}_{\epsilon}$ is equicontinuous, and for any $(x,X) \in C^{\alpha \text{-Hld}}_0 ([0,1]) \times  G\Omega_{\alpha \text{-Hld}} (\bbR^2)$ with $I^{(1)} (x,X) < \infty$, $\tilde{\Phi}_\epsilon (X) \to \tilde{\Phi} (X)$. 
Then we have that for any converging sequence $(x_\epsilon, X_\epsilon) \to (x,X)$ with $I^{(1)} (x,X) < \infty$, $(x_\epsilon , \tilde{\Phi}_\epsilon (X_\epsilon))$ converges to $(x, \tilde{\Phi}_0 (X))$, and so the extended contraction principle (Theorem 2.1 in \cite{Pu}) implies that $\{ (X^\epsilon, \tilde{A}^\epsilon)\}$ satisfies the LDP on $C^{\alpha\text{-Hld}} (\bbR^2)$ with speed $\epsilon^{2\mu+1}$ with good rate function 
 \begin{align*}
I^{(2)} (x,\tilde{a}) := \inf \left \{  \frac{1}{2} \| (w ,w^\perp )\|_{\mathcal{H}}^2 : x = \rho  w + \sqrt{1 - \rho^2} w^\perp,\ \tilde{a} = a (A_0) w \right \}.
\end{align*}
Since $\{ \mathcal{K}_\epsilon \}_{\epsilon >0}$ is equicontinuous and converge to the usual fractional kernel $\mathcal{K}_0$ (see Appendix \ref{proK}), the extended contraction principle implies that $\{ (X^\epsilon , V^\epsilon ) \}$ satisfies the LDP on $C^{\alpha\text{-Hld}} (\bbR) \times C(\bbR)$ with speed $\epsilon^{2\mu+1}$ with good rate function 
 \begin{align*}
I^{(3)} (x,v) := \inf \left  \{ \frac{1}{2} \| (w,w^\perp )\|_{\mathcal{H}}^2 : x = \rho  w + \sqrt{1 - \rho^2} w^\perp, \  v = \Psi \mathcal{K}_0 (a (A_0) w ) \right \},
\end{align*}

By using the assumption of $f$, and Proposition \ref{main2} $(ii)$, we can apply the contraction principle and Theorem \ref{main1}, and so $\{ F( Z^\epsilon) := F ((Z^{(1)})^\epsilon , (Z^{(2)})^\epsilon ) \}$ satisfies the LDP on $G \Omega_{\alpha\text{-Hld}}(\bbR^2) $ with speed $\epsilon^{2\mu+1}$ with good rate function 
 \begin{align*}
I^{(4)} (X) 
:= \inf \left \{  \frac{1}{2} \| (w,w^\perp ) \|_{\mathcal{H}}^2 :
  \begin{aligned}
  & x = \rho  w + \sqrt{1 - \rho^2} w^\perp ,\\
    & X = F  (f(\Psi \mathcal{K}_0 (a (A_0) w ),0) \cdot x, 0 ) 
      \end{aligned}  \right\},
\end{align*}

Let $\Phi_{\epsilon}$ be the solution map of RDE with the coefficient $( \tilde{\sigma}^\epsilon, - \frac{1}{2} \{  \tilde{\sigma }^\epsilon  (\tilde{\sigma}^\epsilon)' + (\tilde{\sigma}^\epsilon)^2 \}  )$ i.e. $\tilde{Y}^\epsilon = \Phi_\epsilon \circ F (Z^{\epsilon} )$. 
Since the same reason as $\tilde{\Phi}_\epsilon$,  $\{\Phi_{\epsilon} \}_{\epsilon \in (0,1]}$ is equcontinuous, and one can prove that for any sequence with $ X_\epsilon \to X$ with $I^{(4)}(X) < \infty$, $ \tilde{\Phi}_\epsilon (X_\epsilon) \to  \tilde{\Phi}_0 (X)$.
Therefore,  the extended contraction principle~\cite{Pu} implies that $\{ \tilde{Y}^\epsilon\}$ satisfies the LDP on $C^{\alpha\text{-Hld}}$ with speed $\epsilon^{2\mu+1}$ with good rate function
 \begin{align*}
 \tilde{J} (\tilde{y}) 
:= \inf \left \{ \frac{1}{2} \{ (w,w^\perp) \|_{\mathcal{H}}^2 :
  \begin{aligned}
  & x = \rho  w + \sqrt{1 - \rho^2} w^\perp,\\
    & \tilde{y} = \sigma(y_0) \int_0^{\cdot} f(\Psi \mathcal{K}_0 (a (A_0) w )_r,0) \D x_r  
      \end{aligned}  \right\},
\end{align*}
and this is the claim.
\end{proof}

\appendix
\section{Some properties for $\mathcal{K}^\epsilon$}\label{proK}

\begin{proposition}
We fix $\alpha, \gamma \in (0,1)$. Under the Hypothesis \ref{hypo}, we have the following:
\begin{enumerate}
\item $\{ \mathcal{K}^\epsilon\}_{\epsilon \in (0,1]}$ is equicontinuous,
\item for all $f \in C^\alpha$, $\mathcal{K}^\epsilon f$ converges to $\mathcal{K}_0f$. 
\end{enumerate}
\end{proposition}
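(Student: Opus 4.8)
The plan is to make $\mathcal{K}^\epsilon$ explicit and then dispatch the two assertions separately. Writing $\kappa(u)=g(u)u^\mu$ and $\kappa_\epsilon(u)=\kappa(\epsilon u)$, a direct computation gives $\epsilon^{-\mu}\kappa_\epsilon(t)=g(\epsilon t)t^\mu$ and $\epsilon^{-\mu}\frac{\D}{\D t}\kappa_\epsilon(t-s)=\epsilon\,g'(\epsilon(t-s))(t-s)^\mu+\mu\,g(\epsilon(t-s))(t-s)^{\mu-1}$, whence
\[
\mathcal{K}^\epsilon f(t)=g(\epsilon t)\,t^\mu\big(f(t)-f(0)\big)+\int_0^t\big(f(s)-f(t)\big)\left[\epsilon\,g'(\epsilon(t-s))(t-s)^\mu+\mu\,g(\epsilon(t-s))(t-s)^{\mu-1}\right]\D s,
\]
while $\mathcal{K}_0 f$ is the same formula with $g\equiv1$. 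I will use repeatedly that $\alpha+\mu=\gamma>0$, so that $(t-s)^{\alpha+\mu}$ and $(t-s)^{\alpha+\mu-1}=(t-s)^{\gamma-1}$ are integrable on $[0,t]$, and that $\mathcal{K}^\epsilon$ is exactly the operator $\mathcal{K}$ associated with the kernel $\tilde\kappa^\epsilon(t):=g(\epsilon t)t^\mu$, whose Lipschitz part $t\mapsto g(\epsilon t)$ satisfies $\|g(\epsilon\,\cdot)\|_{C^2([0,1])}\le\|g\|_{C^2_b}$ uniformly in $\epsilon\in(0,1]$ (each derivative picks up a factor $\epsilon^k\le1$).

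For part~(1), each $\mathcal{K}^\epsilon$ is linear, so equicontinuity of the family is the same as a uniform operator bound $\|\mathcal{K}^\epsilon f\|_{\gamma\text{-Hld}}\le C\|f\|_{\alpha\text{-Hld}}$ with $C$ independent of $\epsilon$. Since the kernels $\tilde\kappa^\epsilon$ all carry the same exponent $\mu$ and have Lipschitz parts bounded uniformly in $\epsilon$, this follows from Theorem~1 of \cite{Fu23} once one checks — by inspecting its proof — that the constant there depends on the kernel only through $\|g\|_{C^2_b}$ and the exponents $\alpha,\gamma$. Concretely, one reruns the estimate of that theorem using the $\epsilon$-free bounds $|\epsilon^{-\mu}\kappa_\epsilon(t)|\le\|g\|_\infty t^\mu$ and $|\epsilon^{-\mu}\frac{\D}{\D t}\kappa_\epsilon(t-s)|\le(\|g'\|_\infty+|\mu|\|g\|_\infty)(t-s)^{\mu-1}$ together with $|f(s)-f(t)|\le\|f\|_{\alpha\text{-Hld}}|t-s|^\alpha$.

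For part~(2), fix $f\in C^{\alpha\text{-Hld}}([0,1])$ and set $\delta_\epsilon:=\sup_{r\in[0,1]}|g(\epsilon r)-1|$, which tends to $0$ by Hypothesis~\ref{hypo}. Subtracting the formula for $\mathcal{K}_0 f$ from that for $\mathcal{K}^\epsilon f$ and bounding the three resulting contributions uniformly in $t\in[0,1]$: the boundary term is $\le\delta_\epsilon\|f\|_{\alpha\text{-Hld}}t^\gamma\le\delta_\epsilon\|f\|_{\alpha\text{-Hld}}$; the term carrying $\epsilon g'$ is $\le\epsilon\|g'\|_\infty\|f\|_{\alpha\text{-Hld}}\int_0^1 r^\gamma\,\D r$; and, replacing $g(\epsilon(t-s))$ by $1$ at the cost of a factor $\delta_\epsilon$, the remaining integral is $\le|\mu|\,\delta_\epsilon\|f\|_{\alpha\text{-Hld}}\int_0^1 r^{\gamma-1}\,\D r$. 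Each tends to $0$ as $\epsilon\searrow0$, so $\|\mathcal{K}^\epsilon f-\mathcal{K}_0 f\|_\infty\to0$, i.e.\ $\mathcal{K}^\epsilon f\to\mathcal{K}_0 f$ in $C([0,1])$; interpolating with the uniform $C^{\gamma\text{-Hld}}$ bound of part~(1) further gives convergence in $C_0^{\gamma'\text{-Hld}}([0,1])$ for every $\gamma'<\gamma$, which is the form in which the statement is applied (via the extended contraction principle) in the proof of Theorem~\ref{short}.

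The only genuinely delicate point is the $\epsilon$-uniformity of the constant in part~(1): it rests on the observation that the bound in \cite{Fu23} is stable under replacing the Lipschitz factor of the kernel by another with the same $C^2$ bound — here the factors $g(\epsilon\,\cdot)$, controlled uniformly because $g\in C^2_b$ and $\epsilon\le1$. Everything in part~(2) is a routine dominated-convergence estimate.
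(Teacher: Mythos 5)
Your explicit formula for $\mathcal{K}^\epsilon f$ and the observation that $\mathcal{K}^\epsilon$ is exactly the operator $\mathcal{K}$ built from the kernel $g(\epsilon t)t^{\mu}$, whose ``Lipschitz part'' $g(\epsilon\,\cdot)$ is bounded in $C^2$ uniformly in $\epsilon\le 1$, are correct and are the right structural starting point. The problem is that part (1) is never actually proved: you defer the $\epsilon$-uniform bound $\|\mathcal{K}^\epsilon f\|_{\gamma\text{-Hld}}\le C\|f\|_{\alpha\text{-Hld}}$ to Theorem~1 of \cite{Fu23} together with an unverified claim that its constant depends on the kernel only through $\|g\|_{C^2_b}$ and the exponents, and the bounds you propose for ``rerunning'' that estimate, namely $|\epsilon^{-\mu}\kappa_\epsilon(t)|\lesssim t^{\mu}$ and $|\epsilon^{-\mu}\tfrac{\D}{\D t}\kappa_\epsilon(t)|\lesssim t^{\mu-1}$, are not sufficient. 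The $\gamma$-H\"older estimate of the convolution term $\varphi_\epsilon(t)=\epsilon^{-\mu}\int_0^t(f(t)-f(t-r))\,\tfrac{\D}{\D r}\kappa_\epsilon(r)\,\D r$ requires controlling the increment $\tfrac{\D}{\D r}\kappa_\epsilon(r+h)-\tfrac{\D}{\D r}\kappa_\epsilon(r)$, i.e.\ the second-derivative bound $|\epsilon^{-\mu}\tfrac{\D^2}{\D t^2}\kappa_\epsilon(u)|\lesssim u^{\mu-2}$ (this is exactly why Hypothesis~\ref{hypo} demands $g\in C^2_b$); with only the zeroth- and first-order bounds one gets boundedness of $\varphi_\epsilon$, not $|\varphi_\epsilon(t+h)-\varphi_\epsilon(t)|\lesssim\|f\|_{\alpha\text{-Hld}}|h|^{\gamma}$. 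The paper does not rest on an inspection of \cite{Fu23}: it carries out this uniform estimate from scratch (the decomposition into $I_1,I_2,I_3$, where $I_1$ is precisely the term needing $\kappa_\epsilon''$), and that computation is the substantive content of part (1).

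For part (2), your sup-norm estimate is correct and gives $\|\mathcal{K}^\epsilon f-\mathcal{K}_0f\|_{\infty}\lesssim(\epsilon+\sup_{t\in[0,1]}|g(\epsilon t)-1|)\|f\|_{\alpha\text{-Hld}}$, but the paper proves the corresponding bound for increments, $|(\mathcal{K}^\epsilon f-\mathcal{K}_0f)(t)-(\mathcal{K}^\epsilon f-\mathcal{K}_0f)(s)|\lesssim(\epsilon+\sup_{t\in[0,1]}|g(\epsilon t)-1|)\|f\|_{\alpha\text{-Hld}}|t-s|^{\gamma}$ (the terms $T_1,\dots,T_5$), i.e.\ convergence in the $\gamma$-H\"older norm. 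Your interpolation only reaches $C^{\gamma'\text{-Hld}}$ with $\gamma'<\gamma$, and, contrary to your closing remark, this is not the form in which the proposition is used: in the proof of Theorem~\ref{short} one must push the convergence through $\Psi$, which is only assumed continuous on $C_0^{\gamma\text{-Hld}}$; since the $\gamma$-H\"older topology is finer than the $\gamma'$-H\"older one, convergence in $C^{\gamma'\text{-Hld}}$ (even with a uniform $C^{\gamma\text{-Hld}}$ bound) does not let the extended contraction principle pass through $\Psi$. So what you prove in (2) is strictly weaker than what the paper establishes and needs; the missing piece is the H\"older-seminorm version of your dominated-convergence estimate, which again hinges on the second-derivative control of the kernel.
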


\begin{proof}
$(i)$ Since $\mathcal{K}^\epsilon$ is linear, it is enough to show that for any $f \in C^{\alpha\text{-Hld}}$, there exists a constant $C>0$ (uniformly $\epsilon$ and $f$) such that $\|\mathcal{K}^\epsilon f \|_{\gamma} \le C \|f\|_{\alpha}$.
 First note that 
\begin{align*}
|\kappa_\epsilon (t) |  \lesssim \epsilon^\mu t^\mu, \quad 
\left  |\frac{\D}{\D t}  \kappa_\epsilon (t) \right  |  \lesssim \epsilon^\mu t^{\mu-1}, \quad 
\left  | \frac{\D^2}{\D t^2} \kappa_\epsilon (t) \right | \lesssim \epsilon^\mu t^{\mu -2}.
\end{align*}
We will estimate the first term. Let $ \phi_\epsilon(t) := \epsilon^{-\mu}  \kappa_{\epsilon} (t) (f(t) - f(0)) $.
 For $\forall t \in [0,1]$ and $ \forall h \in (0,1-t]$, we have that 
\begin{align*}
\left| \phi_\epsilon(t+h) - \phi_\epsilon(t)  \right| 
& \le \epsilon^{-\mu} \left\{  |\kappa_\epsilon(t+h) | f(t+h) -f(t)| +  |f(t) -f(0)| |\kappa_\epsilon(t+h) -\kappa_\epsilon(t) |   \right\}\\
&  \le   \| f \|_{\alpha } h^{\alpha} (t+h)^{\mu} +\epsilon^{-\mu} \|f\|_{\alpha} t^{\alpha } \left ( \int_t^{t+h}  \left| \frac{\D}{\D r} \kappa_\epsilon (r)  \right| \D r \right )  \\
& \lesssim \|f\|_{\alpha} \left \{   |h|^\gamma + t^{\alpha} (t^\mu - (t+h)^\mu)   \right\} \lesssim \|f\|_{\alpha}  |h|^\gamma.
\end{align*}
Here we use that 
\begin{align}\label{Mu}
 t^{\gamma} (t+h)^{\mu} ( (t+h)^{-\mu} - t^{-\mu}) \lesssim h^{\gamma},
 \end{align}
  in the final inequality (see \cite{Mu}[Chapter1, Page 15]).
The case $h <0$ is analogous. 

We now consider the second term. Let 
\[
\varphi_\epsilon (t) := \epsilon^{-\mu}  \int_0^t (f(t)- f(s) ) \frac{\D}{\D t} \kappa_{\epsilon} (t-s) \D s 
= \epsilon^{-\mu}  \int_0^t (f(t)- f(t-r) ) \frac{\D}{\D r} \kappa_{\epsilon} (r) \D r.
\]
Then the change of variables implies that 
\begin{align*}
\varphi_\epsilon (t +h) - \varphi_\epsilon (t) 
&= \epsilon^{-\mu} \int_0^t (f(t)- f(t-r) ) \left ( \frac{\D}{\D r} \kappa_{\epsilon} (r+h) -  \frac{\D}{\D r} \kappa_{\epsilon} (r) \right ) \D r\\
& \quad + \epsilon^{-\mu} \int_0^t (f(t+h)- f(t) )  \frac{\D}{\D r} \kappa_{\epsilon} (r+h) \D r \\
& \quad + \epsilon^{-\mu} \int_{-h}^0 (f(t+h)- f(t-r) )  \frac{\D}{\D r} \kappa_{\epsilon} (r+h) \D r =: I_1+ I_2+I_3.
\end{align*}
Then we have that 
\begin{align*}
|I_1| & \le \epsilon^{-\mu} \int_0^t \int_r^{r+h} |f(t) - f(t-r)| \left |\frac{\D^2}{ \D u^2} \kappa_\epsilon (u) \right| \D u \D r\\
& \le \|f\|_{\alpha} \int_0^t \int_r^{r+h} r^{\alpha} u^{\mu-2} \D u \D r\\
&   \lesssim \|f\|_{\alpha } |h|^\gamma \left(  \int_0^{t/h} r^\alpha [r^{\mu-1} - (1+r)^{\mu-1}] \D r \right) \lesssim \|f\|_{\alpha } |h|^\gamma.
\end{align*}
Here we use that the function $ y \mapsto 1- (1+y)^{\mu -1} + (\mu -1) y$ is concave with a maximum value of $0$ at $y=0$ in the last inequality.
One can also obtain that 
\begin{align*}
|I_2|   \le \epsilon^{-\mu} \|f\|_\alpha \int_0^t h^\alpha \left| \frac{\D}{ \D r} \kappa_\epsilon (r+h) \right| \D r 
 \le \|f\|_{\alpha} h^\alpha \int_0^t (r+h)^{\mu-1} \D r 
 \lesssim \|f\|_\alpha h^\gamma 
\end{align*}
\begin{align*}
|I_3| \le \epsilon^{-\mu} \|f\|_\alpha \int_{-h}^0 (r+h)^\alpha \left| \frac{\D}{ \D r} \kappa_\epsilon (r+h) \right| \D r
\le  \|f\|_\alpha \int_{-h}^0  (u+h)^{\alpha + \mu -1} \D r 
\lesssim \|f\|_\alpha h^\gamma
\end{align*}
and these inequalities imply the first assertion.

$(ii)$ The simple calculation implies that 
\begin{align*}
& |\mcK^\epsilon f(t) - \mcK_0 f(t) - \mcK^\epsilon f(s) + \mcK_0 f(s) | \\
& \le  \left | \phi_\epsilon (t) -  (f(t) -f(0))  t^\mu -  \phi_\epsilon (s)  +  (f(s) -f(0))  s^\mu \right| \\
& \quad + \Big | \varphi_\epsilon (t)  - \alpha \int^t_0 (f(t) - f(u) ) (t-u)^{\mu-1} \D u  -  \varphi_\epsilon (s) + \alpha \int^s_0 (f(s) - f(u) ) (s-u)^{\mu-1} \D u \Big | \\
& \le \left |  \epsilon^{-\mu} (f(t) -f(s) ) \kappa_\epsilon (t) - (f(t) -f(s)) t^\mu \right|  \\
& \quad + \left|\epsilon^{-\mu} (f(s) -f(0) ) ( \kappa_\epsilon (t) -  \kappa_\epsilon (s) ) - (f(s) -f(0)) ( t^\mu - s^\mu) \right |\\
& \quad + \left| \int_0^s (f(s) - f(t)) \left ( \epsilon^{-\mu} \frac{\D}{\D t} \kappa_\epsilon (t-r) - \mu (t-r)^{\mu-1} \right) \D r \right|\\
& \quad + \left |  \int_0^s (f(r) -f(s)) \left( \epsilon^{-\mu} \frac{\D}{\D t} \kappa_\epsilon(t-r) - \mu (t-r)^{\mu-1}  - \epsilon^{-\mu}\frac{\D}{\D s} \kappa_\epsilon(s-r) +\mu (s-r)^{\mu-1} \right) \D r \right|\\
& \quad + \left|  \int_s^t (f(r) - f(t))  \left( \epsilon^{-\mu} \frac{\D}{\D t}  \kappa_\epsilon(t-r)  - \mu (t -r)^{\mu-1} \right) \D r \right|  = : T_1 + T_2 + T_3 + T_4 + T_5,
\end{align*}
and so we will estimate them. $T_1$ is simply estimated by 
\begin{align*}
|T_{1}| 
& \le \|f\|_{\alpha} |t-s|^\alpha | \epsilon^{-\mu} \kappa_\epsilon (t) -  t^\mu  | \\
& \le \|f\|_\alpha |t-s|^\gamma \left( \frac{|t-s|}{t} \right)^{-\mu} \left(  \sup_{t \in[0,1]} |g(\epsilon t) -1 | \right) \lesssim  \|f\|_\alpha |t-s|^\gamma  \left(  \sup_{t \in[0,1]} |g(\epsilon t) -1 | \right).
\end{align*}
We also have that 
\begin{align*}
|T_{2}| 
& = |(f(s)- f(0)) \{  (g(\epsilon t) - g(\epsilon s) ) t^\mu + (g(\epsilon s) -1) (t^\mu - s^\mu) \}|\\
& \le |f(s) -f(0) | \left| \int_{\epsilon s}^{\epsilon t} \frac{\D}{\D r} g(r) \D r   \right| t^{\mu} + |f(s) -f(0) | | g(\epsilon s) -1|  |t^\mu - s^\mu|\\
& \le \|f\|_\alpha \left \{ \epsilon   |t-s|  s^\alpha t^\mu  + s^\alpha  \left(  \sup_{t \in[0,1]} |g(\epsilon t) -1 | \right)  |t^\mu - s^\mu| \right\}\\
& \le \|f\|_\alpha \left \{  \epsilon |t-s|^\gamma +   \left(  \sup_{t \in[0,1]} |g(\epsilon t) -1 | \right) s^\alpha  ( s^\mu - t^\mu) \right \}\\
& \lesssim \|f\|_\alpha \left ( \epsilon +  \sup_{t \in[0,1]} |g(\epsilon t) -1 | \right ) |t-s|^\gamma.
\end{align*}
Here we use \eqref{Mu} in the last inequality. $T_3$ is estimated by
\begin{align*}
|T_{3} | 
& \le \|f\|_{\alpha} |t-s|^\alpha  \int_0^s \left|  \epsilon^{-\mu} \frac{\D}{\D t} \kappa_\epsilon (t-r) - \mu (t-r)^{\mu-1} \right |  \D r \\
& \le \|f\|_{\alpha} |t-s|^\alpha  \int_0^s \left|   \epsilon g' (\epsilon (t-r)) (t-r)^\mu + \mu g(\epsilon (t-r)) (t-r)^{\mu-1} - \mu (t-r)^{\mu-1}  \right|  \D r \\
& \le \|f\|_\alpha |t-s|^\alpha \left\{ \epsilon \int_0^s (t-r)^\mu \D r + |\mu| \int_0^s |g(\epsilon (t-r)) -1| (t-r)^{\mu-1} \D r \right\}\\
& \lesssim \|f\|_\alpha |t-s|^\gamma \left ( \epsilon +  \sup_{t \in[0,1]} |g(\epsilon t) -1 | \right ).
\end{align*}
 To estimate $T_4$, we decompose it as follows: 
\begin{align*}
|T_4| & \le \|f\|_{\alpha}  \int_0^s (s-r)^\alpha \Bigg \{  \left| \epsilon g'(\epsilon (t-r)) (t-r)^\mu - \epsilon g'(\epsilon (s-r)) (s-r)^\mu \right| \\
& \qquad + |\mu|   \left |   g(\epsilon (t-r)) (t-r)^{\mu-1} -  (t-r)^{\mu-1}   -  g(\epsilon (s-r)) (s-r)^{\mu-1} + (s-r)^{\mu-1}  \right| \Bigg \} \D r  \\
& \lesssim  \int_0^s (s-r)^\alpha    \left| \epsilon g'(\epsilon (t-r)) (t-r)^\mu - \epsilon g'(\epsilon (s-r)) (s-r)^\mu \right| \D r \\
& \quad +  \int_0^s (s-r)^\alpha |g(\epsilon(t-r)) - g(\epsilon(s-r))| |(t-r)^{\mu-1} - (s-r)^{\mu-1}| \D r \\
& \quad +    \int_0^s (s-r)^{\gamma-1} |g(\epsilon(t-r)) - g(\epsilon(s-r))|   \D r \\
& \quad +   \int_0^s (s-r)^\alpha |g(\epsilon(s-r)) - 1 | |(t-r)^{\mu-1} - (s-r)^{\mu-1}| \D r  =: T_{41} + T_{42} + T_{43} + T_{44},
\end{align*} 
The estimations of them are obtained as follows:
\begin{align*}
|T_{41}| 
& = \epsilon \int_0^s r^\alpha \left| \{ g'(\epsilon (t-s+r)) - g'(\epsilon r) \} (t-s +r)^\mu + g'(\epsilon r) ((t-s+r)^\mu -r^\mu ) \right| \D r \\
& \le \epsilon \int_0^s r^\alpha \left | \left( \int_{\epsilon r}^{\epsilon (t-s+r)} g''(u) \D u \right) (t-s +r)^\mu  \right| \D r  + \epsilon \int_0^s r^\alpha |g'(\epsilon r)| |(t-s+r)^\mu - r^\mu| \D r\\
& \le \epsilon^2  |t-s| \int_0^s r^\alpha  (t-s +r)^\mu \D r + \epsilon \int_0^s r^\alpha  (r^\mu - (t-s+r)^\mu ) \D r  \\
& \le \epsilon^2  |t-s|^{1+\mu} \left( \int_0^s r^\alpha \D r \right) + \epsilon |t-s|^\gamma  \int_0^s \D r\\
& \lesssim ( \epsilon + \epsilon^2 )    |t-s|^{\gamma}
\end{align*}

\begin{align*}
|T_{42}| & \le \int_0^s  (s-r)^\alpha \left | \int_{\epsilon (s-r) }^{\epsilon (t-r)} g'(u)\D u \right|  |(t-r)^{\mu-1} - (s-r)^{\mu-1} | \D r\\
& \le \epsilon |t-s| \int_0^s r^\alpha   |r^{\mu-1} - (t-s+r)^{\mu-1} | \D r \lesssim \epsilon  |t-s|^{\gamma},
\end{align*}
Here we use the same argument as the estimation of $I_1$ in $(i)$ in the final equality.
\begin{align*}
|T_{43}| 
& \le \int_0^s (s-r)^{\gamma-1} \left|  \int_{\epsilon (s-r) }^{\epsilon (t-r)} g'(u)\D u \right| \D r  \le \epsilon |t-s| \int_0^s (s-r)^{\gamma-1} \D r \lesssim \epsilon  |t-s|^{\gamma},
\end{align*}

\begin{align*}
|T_{44}| 
& \le \left (  \sup_{t \in[0,1]} |g(\epsilon t) -1 | \right )  \int_0^s (s-r)^\alpha  | (t-r)^{\mu-1} -  (s-r)^{\mu-1} | \D r \\
& \le  \left (  \sup_{t \in[0,1]} |g(\epsilon t) -1 | \right )  \int_0^s r^\alpha  | r^{\mu-1} -  (t-s+r)^{\mu-1} | \D r 
 \lesssim \epsilon  |t-s|^{\gamma},
\end{align*}
Here we use again the same argument as the estimation of $I_1$ in $(i)$ in the final equality. 
These inequalities imply the desired estimate of $T_4$. 
Finally, one has that 
\begin{align*}
|T_5|
& \le \int_s^t |f(t) - f(r)| \left| \epsilon g'(\epsilon (t-r)) (t-r)^\mu + \mu g(\epsilon(t-r)) (t-r)^{\mu-1} - \mu (t-r)^{\mu-1} \right| \D r\\
& \le \|f\|_\alpha \left\{ \epsilon \int_s^t (t-r)^\gamma \D r + |\mu| \int_s^t  (t-r)^{\gamma-1} |g(\epsilon(t-r)) -1| \D r  \right\} \\
& \lesssim \epsilon|t-s|^{\gamma+1} + \left (  \sup_{t \in[0,1]} |g(\epsilon t) -1 | \right ) \int_s^t (t-r)^{\gamma-1} \D r\\
& \le \left( \epsilon +  \sup_{t \in[0,1]} |g(\epsilon t) -1 |   \right) |t-s|^\gamma
\end{align*}
and so we have the claim.
\end{proof}

\section{Proof of Theorem \ref{putcall}}\label{B}

\begin{proof}
{\noindent  (i) Lower bound}

For $x \le 0$ and $\delta >0$, one has that 
\begin{align*}
\EXP{(\exp{(x t^{-\mu})} -S_t)_+ } 
& \ge \EXP{1_{\{ \exp{(x(1+\delta)t^{-\mu })} >S_t \}} \left( \exp{(xt^{-\mu})} - S_t \right)}\\
& \ge \left ( \exp{(xt^{-\mu})} - \exp{(x (1+\delta ) t^{-\mu})}\right) \PROB{ \exp{(x (1+\delta ) t^{-\mu})} > S_t}\\
& \ge \exp{(x (1+\delta ) t^{-\mu})} (- x \delta t^{-\mu} ) \PROB{ \exp{(x (1+\delta ) t^{-\mu})} > S_t}.
\end{align*}
Since $\lim_{t \searrow 0} t^{2\mu +1} \log t =0$, one has that 
\begin{align*}
& \liminf_{t \searrow 0} t^{2\mu +1} \log \EXP{(\exp{(x t^{-\mu})} -S_t)_+ } \\
&  \ge \liminf_{t \searrow 0} t^{2 \mu+1} \left ( x (1+\delta) t^{-\mu} + \log(-x) + \log \delta + -\mu \log t + \log \PROB{ \exp{(x (1+\delta ) t^{-\mu})} > S_t}  \right) \\
& = \liminf_{t \searrow 0} t^{2 \mu+1}   \log \PROB{ \exp{(x (1+\delta ) t^{-\mu})} > S_t}
\end{align*}
and so Theorem \ref{shorttime} implies that 
\begin{align*}
\liminf_{t \searrow 0} t^{2\mu +1} \log \EXP{(\exp{(x t^{-\mu})} -S_t)_+ }  \ge - \Lambda^\ast (x (1+\delta)),
\end{align*}
and so the continuity of $\Lambda^\ast$ implies the lower bound.

{\noindent  (ii) Upper bound.}

For all $q >1$, the \Hd \ inequality implies that 
\begin{align*}
\EXP{(\exp{(x t^{-\mu})} -S_t)_+} 
&= \EXP{(\exp{(x t^{-\mu})} -S_t)_+ 1_{\{ \exp{(x t^{-\mu})} >S_t \}} } \\
& \le \EXP{(\exp{(x t^{-\mu})} -S_t)^q_+ }^{1/q} \EXP{ 1_{\{ \exp{(x t^{-\mu})} >S_t \}} }^{1-1/q}
 \end{align*}
 and so
 \begin{align*}
& t^{2\mu +1} \log \EXP{(\exp{(x t^{-\mu})} -S_t)_+} \\
 & \le  \frac{t^{2\mu+1}}{q} \log \EXP{(\exp{(x t^{-\mu})} -S_t)^q_+ } + t^{2\mu+1} (1-1/q ) \PROB{ 1_{\{ \exp{(x t^{-\mu})} >S_t \}}} = : T^{(1)}_t+T^{(2)}_t.
 \end{align*}
 Since $S$ is positive, 
 \begin{align}\label{upper}
 \limsup_{t\searrow 0} T^{(1)}_t \le  \limsup_{t\searrow 0}  \frac{t^{2\mu+1}}{q} \log \EXP{\exp{( q x t^{-\mu})}  } =  \limsup_{t\searrow 0}  x t^{\mu+1} = 0.
 \end{align}
 By Theorem \ref{shorttime}, it is also true that 
 \begin{align*}
  \limsup_{t\searrow 0} T^{(2)}_t \le  - \Lambda^\ast (x).
 \end{align*}
 Combined with above two inequalities one can obtain the upper bound. 
 By $(i)$, $(ii)$, one can obtain the first assertion. 
 
 In the second assertion, one has to improve the estimate of \ref{upper} because $(\cdot- \exp{(x t^{-\mu})})_+$ is not bounded.
By using the assumption \eqref{moment}, one can estimate $\EXP{S_t^q}$ instead of $\EXP{\exp{( q x t^{-\mu})}  }$ and the same argument as above implies the required bound. 
\end{proof}

\end{document}